\pgfplotsset{compat=1.15}
\newcommand\addvmargin[1]{
  \node[fit=(current bounding box),inner ysep=#1,inner xsep=0]{};
}
\tikzset{
  topline/.style={baseline={([yshift=#1]current bounding box.north)}},
  topline/.default=-\ht\strutbox
}
\newcommand{\tentgenerator}[1]{%
\begin{tabular}{@{}c@{}}
\begin{tikzpicture}[x=\unitlength,y=\unitlength,scale=0.23,topline]
    \begin{axis}[
        xtick={-1, 0, 1},
        ytick={-1, 0, 1},
        every major tick/.append style={thick, major tick length=10pt, gray},
        axis lines=center,
        axis line style={shorten >=-10pt, shorten <=-10pt,-{>},thick,gray},
        domain=-1:1,
        line width=2pt,
        unit vector ratio*=1 1 1,
    ]
    \addplot+[mark=none, black] coordinates{#1};
    \end{axis}
    \addvmargin{5pt}
\end{tikzpicture}
\end{tabular}%
} 
\newcommand{\tentone}{\tentgenerator{(-1, 0) (0, 1) (1, 0)}}
\newcommand{\tenttwo}{\tentgenerator{(-1, 0) (-0.5, 1) (0, 0) (0.5,1) (1,0)}}
\newcommand{\tentleft}{\tentgenerator{(-1, 0) (-0.5, 1) (0, 0) (1,0)}}
\newcommand{\tentright}{\tentgenerator{(-1, 0) (0, 0) (0.5, 1) (1,0)}}
\newcommand{\tentmid}{\tentgenerator{(-1, 0) (-0.5, 0) (0, 1) (0.5,0) (1,0)}}
\DeclareMathAlphabet\gothic{U}{euf}{m}{n}
\DeclareFontFamily{U}{mathx}{}
\DeclareFontShape{U}{mathx}{m}{n}{<-> mathx10}{}
\DeclareSymbolFont{mathx}{U}{mathx}{m}{n}
\DeclareMathAccent{\widehat}{0}{mathx}{"70}
\DeclareMathAccent{\widecheck}{0}{mathx}{"71}
\def\eqnarray{\stepcounter{equation}\let\@currentlabel=\theequation
\global\@eqnswtrue
\tabskip\@centering\let\\=\@eqncr
$$\halign to \displaywidth\bgroup\hfil\global\@eqcnt\z@
  $\displaystyle\tabskip\z@{##}$&\global\@eqcnt\@ne
  \hfil$\displaystyle{{}##{}}$\hfil
  &\global\@eqcnt\tw@ $\displaystyle{##}$\hfil
  \tabskip\@centering&\llap{##}\tabskip\z@\cr}
\def\endeqnarray{\@@eqncr\egroup
      \global\advance\c@equation\m@ne$$\global\@ignoretrue}
\def\@yeqncr{\@ifnextchar [{\@xeqncr}{\@xeqncr[5pt]}}
\begin{document}

\newtheorem{lemma}{Lemma}[section]
\newtheorem{thm}[lemma]{Theorem}
\newtheorem{cor}[lemma]{Corollary}
\newtheorem{prop}[lemma]{Proposition}

\theoremstyle{definition}

\newtheorem{remark}[lemma]{Remark}
\newtheorem{exam}[lemma]{Example}
\newtheorem{definition}[lemma]{Definition}

\newcommand{\gota}{\gothic{a}}
\newcommand{\gotb}{\gothic{b}}

\newcounter{teller}
\renewcommand{\theteller}{(\alph{teller})}
\newenvironment{tabel}{\begin{list}%
{\rm  (\alph{teller})\hfill}{\usecounter{teller} \leftmargin=1.1cm
\labelwidth=1.1cm \labelsep=0cm \parsep=0cm}
                      }{\end{list}}

\newcounter{tellerr}
\renewcommand{\thetellerr}{(\roman{tellerr})}
\newenvironment{tabeleq}{\begin{list}%
{\rm  (\roman{tellerr})\hfill}{\usecounter{tellerr} \leftmargin=1.1cm
\labelwidth=1.1cm \labelsep=0cm \parsep=0cm}
                         }{\end{list}}

\newcounter{tellerrr}
\renewcommand{\thetellerrr}{(\Roman{tellerrr})}
\newenvironment{tabelR}{\begin{list}%
{\rm  (\Roman{tellerrr})\hfill}{\usecounter{tellerrr} \leftmargin=1.1cm
\labelwidth=1.1cm \labelsep=0cm \parsep=0cm}
                         }{\end{list}}

\newcommand{\Ni}{\mathds{N}}
\newcommand{\Ri}{\mathds{R}}
\newcommand{\Ci}{\mathds{C}}

\renewcommand{\proofname}{{\bf Proof}}

\newcommand{\RRe}{\mathop{\rm Re}}
\newcommand{\IIm}{\mathop{\rm Im}}
\newcommand{\Tr}{{\mathop{\rm Tr \,}}}
\newcommand{\supp}{\mathop{\rm supp}}
\newcommand{\loc}{{\rm loc}}
\newcommand{\spann}{\mathop{\rm span}}
\newcommand{\one}{\mathds{1}}
 
\hyphenation{groups}
\hyphenation{unitary}
 
\newcommand{\tfrac}[2]{{\textstyle \frac{#1}{#2}}}

\newcommand{\cb}{{\cal B}}
\newcommand{\cl}{{\cal L}}
\newcommand{\cm}{{\cal M}}
\newcommand{\cu}{{\cal U}}

\thispagestyle{empty}

\vspace*{1cm}
\begin{center}
{\Large\bf Elliptic systems generating a positive semigroup  \\[2mm]
 are decoupled} \\[4mm]
\large Wolfgang Arendt$^1$, A.F.M. ter Elst$^2$ and Manfred Sauter$^1$  \\[8mm]
Dedicated to J\"urgen Voigt

\end{center}

\vspace{4mm}

\begin{center}
{\bf Abstract}
\end{center}

\begin{list}{}{\leftmargin=1.8cm \rightmargin=1.8cm \listparindent=10mm 
   \parsep=0pt}
\item
We show that the semigroup associated to a second-order 
elliptic system is positive if and only if the differential equations are  
essentially decoupled and the coefficients are real-valued.
This means the system can be replaced by an equivalent decoupled system
(which defines the same semigroup).
Instead of matrix coefficients we more generally consider operator-valued
coefficients, which leads to interesting additional difficulties
with respect to Banach space valued integration.

\end{list}

\vspace{6mm}
\noindent
September 2025.

\vspace{3mm}
\noindent
MSC (2020): 47A07, 47F10, 35J47, 46G10, 46B42.

\vspace{3mm}
\noindent
Keywords: elliptic system, positivity, semigroup, multiplication operator,
operator-valued coefficients, elliptic forms, locality, 
Bochner and weak$^*$ integrals.

\vspace{6mm}

\noindent
{\bf Home institutions:}    \\[3mm]
\begin{tabular}{@{}cl@{\hspace{10mm}}cl}
1. & Institute of Applied Analysis & 
2. & Department of Mathematics  \\
& Ulm University & 
  & University of Auckland  \\
& Helmholtzstr.\ 18  &
  & Private bag 92019 \\
& 89081 Ulm  &
   & Auckland 1142 \\ 
& Germany  &
  & New Zealand \\[8mm]
\end{tabular}

\newpage

\section{Introduction} \label{Sposvec1}

It is well known that the semigroup generated by (minus) a
second-order elliptic
differential operator with real measurable coefficients is positive 
in the sense that it maps positive functions into positive functions.
In this paper we wish to investigate what happens 
in the purely second-order setting if one replaces
the scalar-valued coefficients by coefficients with 
values in $\cl(L_2(Y))$.
If these operator-valued coefficients are multiplication operators
with real-valued functions, 
then the corresponding semigroup is again positive.
Here we use the following terminology.
Let $\Omega \subset \Ri^d$ be an open set.
By $L_2(\Omega,L_2(Y))_+$ we denote the cone of all 
$u \in L_2(\Omega,L_2(Y))$ such that $u(x) \in L_2(Y)_+$ for 
almost every $x \in \Omega$, where
$L_2(Y)_+ = \{ f \in L_2(Y) : f(y) \in [0,\infty) \mbox{ for almost every }
y \in Y \} $.
A linear map $T \colon L_2(\Omega,L_2(Y)) \to L_2(\Omega,L_2(Y))$
is called {\bf positive} if $T L_2(\Omega,L_2(Y))_+ \subset L_2(\Omega,L_2(Y))_+$.
A semigroup $S = (S_t)_{t > 0}$ on $L_2(\Omega,L_2(Y))$ is called {\bf positive}
if $S_t$ is positive for all $t > 0$.
The main aim of this paper is to prove the following.

\begin{thm} \label{tposvec101}
Let $\Omega \subset \Ri^d$ be open and 
$(Y,\Sigma,\nu)$ be a $\sigma$-finite measure space such 
that $L_2(Y)$ is separable.
For all $k,l \in \{ 1,\ldots,d \} $ let $C_{kl} \colon \Omega \to \cl(L_2(Y))$
be a bounded function such that 
$x \mapsto (C_{kl}(x) f, g)_{L_2(Y)}$ is a bounded measurable function
for all $f,g \in L_2(Y)$.
Suppose that 
\begin{equation}
\{ C_{kl}(x) : x \in \Omega \} \mbox{ is a separable subset of } \cl(L_2(Y)).
\label{etposvec101;3}
\end{equation}
Further suppose there is a $\mu > 0$ such that 
\[
\RRe \sum_{k,l=1}^d (C_{kl}(x) f_l, f_k)_{L_2(Y)}
\geq \mu \sum_{k=1}^d \|f_k\|_{L_2(Y)}^2
\]
for all $f \in L_2(Y)^d$ and a.e.\ $x \in \Omega$.
Let $V = H^1_0(\Omega,L_2(Y))$ or $V = H^1(\Omega,L_2(Y))$.
Define $\gota \colon V \times V \to \Ci$ by
\[
\gota(u,v)
= \sum_{k,l=1}^d \int_\Omega 
    (C_{kl}(x) \, (\partial_l u)(x), (\partial_k v)(x))_{L_2(Y)} \, dx
.  \]
Let $A$ be the operator in $L_2(\Omega,L_2(Y))$
associated with the form $\gota$ and $S$ the semigroup
generated by $-A$.
Then the following are equivalent.
\begin{tabeleq} 
\item \label{tposvec101-1}
The semigroup $S$ is positive.
\item \label{tposvec101-2}
For all $k,l \in \{ 1,\ldots,d \} $ there exists a bounded measurable function
$c_{kl} \colon \Omega \times Y \to \Ri$ such that
\[
\gota(u,v)
= \sum_{k,l=1}^d \int_\Omega \int_Y
      c_{kl}(x,y) \, ((\partial_l u)(x))(y) \, \overline{((\partial_k v)(x))(y)} \, dy \, dx
\]
for all $u,v \in V$.
\end{tabeleq}
\end{thm}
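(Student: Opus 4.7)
The implication (ii) $\Rightarrow$ (i) is routine: once each $C_{kl}$ is multiplication by a real-valued $c_{kl}(x,\cdot)$, the identification $L_2(\Omega, L_2(Y)) \cong L_2(\Omega \times Y)$ turns $\gota$ into a direct integral over $y \in Y$ of real-coefficient scalar elliptic forms on $L_2(\Omega)$; coercivity transfers to almost every fibre, and the associated semigroup decomposes accordingly, each fibre semigroup being positive by the classical scalar theory, whence $S$ is positive.

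For (i) $\Rightarrow$ (ii) my plan is to invoke Ouhabaz's invariance criterion for the positive cone $L_2(\Omega, L_2(Y))_+$. The map $u \mapsto (\RRe u)^+$ leaves $V$ invariant by the chain rule in the $x$-variable (applied after the identification with $L_2(\Omega \times Y)$), so positivity of $S$ is equivalent to $\RRe\,\gota(v^+, (v-v^+)+iw) \geq 0$ for all real $v, w \in V$. Expanding and rescaling $w$ by arbitrary $\lambda \in \Ri$ decouples this into the two simultaneous conditions
\[
\IIm\,\gota(v^+, w) = 0 \qquad\text{and}\qquad \RRe\,\gota(v^+, v^-) \leq 0 \qquad (v, w \in V \text{ real}).
\]

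The first identity gives reality of the coefficients: testing with $v = af$, $w = bg$ for $a, b \in C_c^\infty(\Omega;\Ri)$, $a \geq 0$, and $f, g \in L_2(Y;\Ri)$ yields $v^+ = af^+$ and hence $\IIm\,\gota(v^+, w) = \sum_{k,l} \int_\Omega (\partial_l a)(\partial_k b)\,\IIm (C_{kl}(x) f^+, g)_{L_2(Y)}\, dx$; a Lebesgue-point localisation using bumps whose gradients at $x_0$ span $\Ri^d$ without violating $a \geq 0$ forces $\IIm (C_{kl}(x_0) f^+, g)_{L_2(Y)} = 0$, so that each $C_{kl}(x_0)$ sends real-valued functions to real-valued functions. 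The decisive ansatz for the second inequality is the mismatched-sign tensor $v = af - bg$ with $a, b \geq 0$ in $C_c^\infty(\Omega)$ and $f, g \in L_2(Y)_+$ having \emph{disjoint} supports. Since $fg = 0$ pointwise on $Y$, one checks directly that $v^+ = af$ and $v^- = bg$, whence
\[
\gota(v^+, v^-) = \sum_{k,l=1}^d \int_\Omega (\partial_l a)(x)(\partial_k b)(x)(C_{kl}(x) f, g)_{L_2(Y)}\, dx \leq 0,
\]
a real inequality by the previous step. Varying $a, b$ over the full admissible class---their gradients at $x_0$ range independently over $\Ri^d$ while $a, b \geq 0$, and, for instance, $\nabla b(x_0)$ can be flipped by replacing $b$ by $c\chi - b$ for a sufficiently large constant $c$ and cutoff $\chi$ equal to $1$ on $\supp b$---combined with Lebesgue-point localisation upgrades the inequality to $(C_{kl}(x_0) f, g)_{L_2(Y)} = 0$ for almost every $x_0$, every $k, l$, and every $f, g \in L_2(Y)_+$ with disjoint supports. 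This is the Beurling--Deny locality criterion, so each $C_{kl}(x_0)$ is a multiplication operator $M_{\phi_{kl}(x_0)}$ for some real-valued $\phi_{kl}(x_0) \in L_\infty(Y)$ with $\|\phi_{kl}(x_0)\|_\infty \leq \|C_{kl}(x_0)\|$.

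The main obstacle I foresee is the final measurability step---assembling the $\phi_{kl}(x)$ into a bounded measurable $c_{kl} \colon \Omega \times Y \to \Ri$---which the abstract flags as the source of Banach-space-valued integration difficulties. The separability assumption~\eqref{etposvec101;3} together with the weak measurability of $C_{kl}$ yields, via Pettis's theorem, strong measurability of $x \mapsto C_{kl}(x) \in \cl(L_2(Y))$; the isometric embedding $L_\infty(Y) \hookrightarrow \cl(L_2(Y))$, $\phi \mapsto M_\phi$, transfers this to strong measurability of $x \mapsto \phi_{kl}(x) \in L_\infty(Y)$. For every $B \subset Y$ of finite measure the map $x \mapsto \phi_{kl}(x)\,\one_B$ is a bounded strongly measurable map into the separable space $L_2(Y)$, hence defines an element of $L_2(\Omega \times B)$ and gives joint measurability of $c_{kl}$ on $\Omega \times B$; $\sigma$-finiteness of $Y$ then permits gluing these pieces into the required $c_{kl}\colon \Omega \times Y \to \Ri$.
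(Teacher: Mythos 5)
Your proposal for (ii)$\Rightarrow$(i) is fine in outline; the paper instead verifies Ouhabaz's invariance criterion directly in Proposition~\ref{pposvec420}, but the content is the same.

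The direction (i)$\Rightarrow$(ii), however, has a fatal error at the central step. You claim that the localised inequality
\[
\sum_{k,l}\int_\Omega (\partial_l a)(\partial_k b)\,(C_{kl}(x)f,g)_{L_2(Y)}\,dx\le 0
\]
can be upgraded, by varying compactly supported $a,b\ge 0$, to $(C_{kl}(x_0)f,g)_{L_2(Y)}=0$ for all disjointly supported $f,g\in L_2(Y)_+$, so that each individual $C_{kl}(x_0)$ is a multiplication operator. This is false, for two reasons. First, the weight matrix $M_{kl}=\int(\partial_l a)(\partial_k b)$ is automatically \emph{symmetric} whenever $a,b\in W^{1,\infty}_c$ (Lemma~\ref{lposvec203}); the gradients of $a$ and $b$ can be chosen independently only pointwise, not after integration, so the localised test detects only the symmetric part $C_{kl}(x_0)+C_{lk}(x_0)$, never $C_{kl}(x_0)$ on its own. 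Second, the conclusion is simply untrue: in Example~\ref{xposvec103} one has $S$ positive, $f=e_1$, $g=e_2$ disjointly supported, yet $(C_{12}(x)f,g)_{L_2(Y)}=3+4i\neq 0$; and Example~\ref{xposvec407} (bounded $\Omega\subset\Ri^3$, so within the scope of Theorem~\ref{tposvec101}) exhibits a positive $S$ for which $C_{12}(x)$ fails to be a multiplication operator for a.e.\ $x$. The same two objections apply to your preliminary step asserting that each $C_{kl}(x_0)$ preserves $L_2(Y,\Ri)$: only the symmetric part does so, and Example~\ref{xposvec103} shows $C_{12}(x)$ itself need not.

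What the localisation actually yields is the paper's Proposition~\ref{pposvec220}: $C_{kl}(x)+C_{lk}(x)$ is, for a.e.\ $x$, multiplication by a real-valued function. But one cannot simply replace $C_{kl}$ by its symmetric part in the form, since for a nonconstant operator-valued coefficient the commutation $\int c\,\partial_l\varphi\,\partial_k\psi=\int c\,\partial_k\varphi\,\partial_l\psi$ fails. The paper's bridge to Statement~\ref{tposvec101-2} is therefore entirely different from your plan: it proves (Lemma~\ref{lposvec401}) that $\gota(\varphi\otimes f,\psi\otimes g)=0$ for disjointly supported $f,g$, deduces that the \emph{integrated} operator $T_{\varphi,\psi}=\sum_{k,l}\int_\Omega\partial_l\varphi\,\overline{\partial_k\psi}\,C_{kl}(x)\,dx$ is a multiplication operator, and then pushes Voigt's contractive projection $P\colon\cl(L_2(Y))\to\cm$ through the Bochner integral to write $T_{\varphi,\psi}=\sum_{k,l}\int\partial_l\varphi\,\overline{\partial_k\psi}\,P(C_{kl}(x))\,dx$, taking $c_{kl}(x,\cdot)$ to be the symbol of $P(C_{kl}(x))$. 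The separability hypothesis~(\ref{etposvec101;3}) is used exactly to guarantee Bochner (not merely weak$^*$) integrability so that this interchange is legitimate (cf.\ Remark~\ref{rposvec405.5} and Example~\ref{xposvec610}); your final paragraph, which spends the separability assumption on joint measurability of symbols $\phi_{kl}$ that do not exist, misidentifies both the role of the hypothesis and the true obstruction.
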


In the case where $Y = \{ 1,\ldots,m \} $,
equipped with the counting measure, we may identify $L_2(Y) = \Ci^m$.
Then $C_{kl}(x)$ is an $m \times m$-matrix.
The parabolic equation governed by the semigroup $S$ is a 
system of the form
\[
\dot{u} + A \, u = 0,
\quad
u(0) = u_0
,  \]
where 
\[
A v = - \sum_{k,l=1}^d \partial_k \, C_{kl} \, \partial_l v
, \quad
v = (v_1,\ldots,v_m)^{\rm T} \in D(A)
\]
and $-A$ is the generator of $S$.
Then Theorem~\ref{tposvec101} says that $S$ is positive if and only if 
there exist $c_{kl}^j \in L_\infty(\Omega,\Ri)$ such that 
\[
(A v)_j
= - \sum_{k,l=1}^d \partial_k \, (c_{kl}^j \, \partial_l v_j)
\]
for all $j \in \{ 1,\ldots,m \} $.
Thus the system decouples and the coefficients can all be chosen real.

We will see in Proposition~\ref{pposvec420}
that the implication `\ref{tposvec101-2}$\Rightarrow$\ref{tposvec101-1}'
is an easy consequence of the 
invariance criteria for convex closed subsets of Ouhabaz 
and the proof is an obvious 
modification of the proof of \cite{Ouh5} Theorem~4.2.
The main point in this paper is to prove the reverse implication.
For that we will use a projection from $\cl(L_2(Y))$
onto the space of all multiplication operators on 
$L_2(Y)$, whose existence is proved in the beautiful
paper \cite{Voi2} by J\"urgen Voigt.

The separability assumption (\ref{etposvec101;3})
in Theorem~\ref{tposvec101} implies that the 
functions $C_{kl} \colon \Omega \to \cl(L_2(Y))$ are Bochner integrable.
If $Y \subset \Ni$ with $\nu$ the counting measure, 
then we can omit this separability assumption~(\ref{etposvec101;3}).
Moreover, the decoupling becomes even more transparent. 
In fact, in that case 
\begin{eqnarray*}
L_2(\Omega, L_2(Y))
& = & \ell_2(Y,L_2(\Omega))  \\
& = & \{ (u_n)_{n \in Y} : u_n \in L_2(\Omega) 
   \mbox{ for all } n \in Y \mbox{ and } 
   \sum_{n \in Y} \|u_n\|_{L_2(\Omega)}^2 < \infty \} 
\end{eqnarray*}
and the following holds.

\begin{thm} \label{tposvec110}
Adopt the notation and assumptions of Theorem~\ref{tposvec101},
except the assumption~{\rm (\ref{etposvec101;3})}.
Suppose that $Y \subset \Ni$ and that $\nu$ is the counting measure.
Then the following are equivalent.
\begin{tabeleq}
\item \label{tposvec110-1}
The semigroup $S$ is positive.
\item \label{tposvec110-2}
For all $n \in Y$ there exists a positive contraction 
semigroup $S^{(n)}$ on $L_2(\Omega)$
such that 
\[
S_t u 
= ( S^{(n)}_t u_n )_{n \in Y}
\]
for all $u = (u_n)_{n \in Y} \in \ell_2(Y,L_2(\Omega))$
and $t > 0$.
\end{tabeleq}
\end{thm}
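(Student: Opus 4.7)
The implication \ref{tposvec110-2}$\Rightarrow$\ref{tposvec110-1} is immediate: if each $S^{(n)}$ preserves $L_2(\Omega)_+$, then $u \mapsto (S^{(n)}_t u_n)_{n \in Y}$ preserves $\ell_2(Y, L_2(\Omega))_+$ componentwise. My plan for the converse is to reduce to Theorem~\ref{tposvec101} applied to two-dimensional sub-systems, where the separability assumption~(\ref{etposvec101;3}) holds automatically.

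For \ref{tposvec110-1}$\Rightarrow$\ref{tposvec110-2}, fix distinct $m, n \in Y$ and let $P_{m,n}$ denote the orthogonal projection of $L_2(Y)$ onto $\spann\{e_m, e_n\}$. Setting $C_{kl}^{m,n}(x) := P_{m,n} C_{kl}(x) P_{m,n}$ gives operator-valued coefficients on $P_{m,n} L_2(Y) \cong \Ci^2$; the range is automatically separable (since $\cl(P_{m,n} L_2(Y))$ is four-dimensional), and the ellipticity constant $\mu$ is inherited by restricting the test vectors in the ellipticity condition to $(P_{m,n} L_2(Y))^d$. The corresponding form $\gota^{m,n}$ on $V^{m,n} := V \cap L_2(\Omega, P_{m,n} L_2(Y))$ is a restriction of $\gota$, hence closed, sectorial and accretive, and generates a holomorphic semigroup $\tilde S^{m,n}$ on $L_2(\Omega, P_{m,n} L_2(Y))$.

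The key step is to show that $\tilde S^{m,n}$ is positive via Ouhabaz's invariance criterion. For $u = f e_m + g e_n \in V^{m,n}$, the orthogonal projection onto $L_2(\Omega, P_{m,n} L_2(Y))_+$ is $P u = (\RRe f)^+ e_m + (\RRe g)^+ e_n$, and this coincides with the orthogonal projection of the same $u$ onto $L_2(\Omega, L_2(Y))_+$ in the ambient space, because positivity in $\ell_2(Y)$ is componentwise. Hence the Ouhabaz inequality $\RRe \gota(Pu, u-Pu) \geq 0$, which holds for all $u \in V$ by positivity of $S$, specialises to the Ouhabaz inequality for $\gota^{m,n}$ on $V^{m,n}$. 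Theorem~\ref{tposvec101} then applies to the two-dimensional system and yields the decoupling: $C_{kl}^{m,n}(x)$ is a diagonal matrix with real entries for a.e.\ $x$, so $(C_{kl}(x) e_m, e_n)_{L_2(Y)} = 0$ and $(C_{kl}(x) e_n, e_n)_{L_2(Y)} \in \Ri$ for a.e.\ $x$.

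Since $m \neq n$ in $Y$ were arbitrary, $C_{kl}(x)$ is for a.e.\ $x$ the multiplication operator by the real-valued sequence $c_{kl}^{(n)}(x) := (C_{kl}(x) e_n, e_n)_{L_2(Y)}$. The form then splits as $\gota(u, v) = \sum_{n \in Y} \gota^{(n)}(u_n, v_n)$, where each scalar form $\gota^{(n)}(f, g) = \sum_{k,l} \int_\Omega c_{kl}^{(n)}(x) \, \partial_l f(x) \, \overline{\partial_k g(x)} \, dx$ is elliptic with real-valued bounded measurable coefficients, and so generates a positive contraction semigroup $S^{(n)}$ on $L_2(\Omega)$ by the classical scalar result. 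A direct form-comparison identifies $\gota$ with the form of the direct-sum semigroup, so $S_t u = (S^{(n)}_t u_n)_{n \in Y}$. The main obstacle I anticipate is the Ouhabaz transfer in the third paragraph: one has to verify explicitly that the two candidate projections (onto the sub-cone and onto the ambient cone) agree on $V^{m,n}$, so that the Ouhabaz inequality genuinely descends; given the componentwise nature of positivity under counting measure, this verification should be routine.
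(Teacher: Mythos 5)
Your reduction to two-dimensional subsystems via the projections $P_{m,n}$ is a genuinely different route from the paper's, which works globally by showing that the projection from $\cl(\ell_2)$ onto multiplication operators has a preadjoint (Lemma~\ref{lposvec605}) and therefore commutes with the weak$^*$-integral under the duality $\cl(\ell_2) = (\cb_1(\ell_2))^*$. Your Ouhabaz transfer in the third paragraph is correct (the cone projections in the ambient and restricted spaces agree componentwise, so the form inequality descends), and the $2\times2$ reduction nicely sidesteps the vector-valued-integration difficulties the paper has to confront. However, the proof as written has a genuine gap at the sentence ``Theorem~\ref{tposvec101} then applies \dots and yields the decoupling: $C_{kl}^{m,n}(x)$ is a diagonal matrix with real entries for a.e.\ $x$, so $(C_{kl}(x)e_m, e_n)_{L_2(Y)} = 0$.'' Theorem~\ref{tposvec101} asserts \emph{nothing} about the coefficient operators themselves: it produces a representation of the \emph{form} with real multiplication-operator coefficients, and the paper emphasises (Section~\ref{Sposvec3}, Example~\ref{xposvec200.95}) that such representations are not unique. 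Example~\ref{xposvec103} directly refutes your inference: there $Y = \{1,2\}$, $P_{m,n}$ is the identity, the semigroup is positive, and yet $(C_{12}(x)e_1, e_2) = 3+4i \neq 0$, so $C_{12}(x)$ is not a multiplication operator. Consequently your subsequent claim that ``$C_{kl}(x)$ is for a.e.\ $x$ the multiplication operator by $c_{kl}^{(n)}(x) := (C_{kl}(x)e_n, e_n)$'' is also false, and moreover $(C_{kl}(x)e_n, e_n)$ need not be real without first replacing $C_{kl}$ by $(C_{kl})\,\widecheck{\;}$ as in Proposition~\ref{pposvec200.3}.

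The overall strategy is salvageable, but one must extract the correct conclusion from Theorem~\ref{tposvec101} applied to $\gota^{m,n}$: since the representing $c_{kl}^{m,n}$ act by pointwise multiplication and $e_m\,e_n = 0$, one obtains $\gota(\varphi \otimes e_m, \psi \otimes e_n) = 0$ for all $m \neq n$ and $\varphi, \psi \in V_\Omega$ (this is precisely the content of Lemma~\ref{lposvec401}), together with a real scalar representation of the diagonal pieces $\gota(\varphi \otimes e_n, \psi \otimes e_n)$. Bilinearity and density then give the form decoupling $\gota(u,v) = \sum_n \gota^{(n)}(u_n, v_n)$, and the semigroup identity $S_t u = (S^{(n)}_t u_n)_n$ follows by an intertwining argument (the paper invokes \cite{AEK} Proposition~2.2). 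In short, what holds almost everywhere is a statement about the form, not about each operator $C_{kl}(x)$, and your argument crosses that line.
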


We would like to stress that in Theorem~\ref{tposvec101} the 
coefficients $c_{kl}$ in Property~\ref{tposvec101-2} are not unique 
in general.
See Section~\ref{Sposvec3} and in particular 
Example~\ref{xposvec200.95} for more details.

The main difficulty in the proof of the implication 
`\ref{tposvec101-1}$\Rightarrow$\ref{tposvec101-2}'
in Theorem~\ref{tposvec101}
is to show that $C_{kl}(x) + C_{lk}(x)$ is a real-valued multiplication
operator on $L_2(Y)$ 
for all $k,l \in \{ 1,\ldots,d \} $ and almost every $x \in \Omega$.
In general the symmetric part 
is needed and the operators 
$C_{kl}(x)$ are not multiplication operators if $k \neq l$.
They even do not map $L_2(Y,\Ri)$ into real-valued functions
in general.
A simple counterexample for $\Omega = \Ri^2$ is as follows.

\begin{exam} \label{xposvec103}
Let $\Omega = \Ri^2$ and $Y = \{ 1,2 \} $ with counting measure.
Choose $V = H^1(\Omega,L_2(Y))$.
For all $k,l \in \{ 1,2 \} $ define $C_{kl} \colon \Omega \to \cl(L_2(Y))$
by 
\begin{eqnarray*}
C_{11}(x) & = & C_{22}(x) = 6 I   \\[5pt]
\Big( C_{12}(x) f \Big) (1) & = & 0   \\[5pt]
\Big( C_{12}(x) f \Big) (2) & = & (3 + 4i) \, f(1)   \\[5pt]
C_{21}(x) & = & - C_{12}(x) 
\end{eqnarray*}
for all $x \in \Omega$ and $f \in L_2(Y)$.
Then $C_{12}(x)$ is not a multiplication operator
and it does not leave the space $L_2(Y,\Ri)$ invariant.
Nevertheless, one deduces easily that the ellipticity condition 
is satisfied.
Further, using Lemma~\ref{lposvec203} and \cite{Ouh5} Theorem~4.2
one quickly obtains that the associated semigroup is positive.

If one replaces the factor $(3 + 4i)$ by $1$, then the associated semigroup
is again positive, the operator $C_{12}(x)$ 
leaves the space $L_2(Y,\Ri)$ invariant, but still
$C_{12}(x)$ is not a multiplication operator.
\end{exam}

We wish to study whether the $C_{kl}(x)$ are multiplication operators,
or not.
In the preliminary Section~\ref{Sposvec2} we collect several characterisations
of multiplication operators on $L_2(Y)$ and how they can be 
combined to obtain a multiplication operator
on $L_2(\Omega \times Y) \simeq L_2(\Omega,L_2(Y))$.
The form $\gota$ does not uniquely determine
the coefficients, but it fixes
some relations between the coefficient operators.
We determine these relations in Section~\ref{Sposvec3}.
For the proof we need suitable $H^1_0(\Omega)$ functions, 
which we construct in Section~\ref{Sposvec2}.
Under a weak measurability condition on the functions
$C_{kl} \colon \Omega \to \cl(L_2(Y))$ we prove in Section~\ref{Sposvec4}
that the operators $C_{kl}(x) + C_{lk}(x)$ are multiplication 
operators on $L_2(Y)$ for almost every $x \in \Omega$, if the semigroup~$S$
is positive.
In Section~\ref{Sposvec5} we prove the implication
`\ref{tposvec101-1}$\Rightarrow$\ref{tposvec101-2}'
of Theorem~\ref{tposvec101} and many extensions.
The atomic case $L_2(Y) = \ell_2$ is discussed in Section~\ref{Sposvec6}.

Systems of coupled parabolic equations are a classical subject of 
partial differential equations with many applications.
See \cite{AngiuliLorenziMangino2}, 
\cite{AddonaLeoneLorenziRhandi}
and \cite{AngiuliLorenziMangino}
for some recent results.
For a related result to ours, namely invariance of the positive cone,
but with constant second-order coefficients on a half-space,
we refer to \cite{KresinMazya} Theorem~3.
Also the case of operator-valued coefficients has been 
considered in the 
literature, see for example 
Amann \cite{Ama9}, Denk--Hieber--Pr\"uss \cite{DHP} or 
\cite{ABK} Section~6.

\section{Preliminaries} \label{Sposvec2}

In this section we collect several results about multiplication operators,
$L_2(Y)$-valued functions and $H^1_0$-functions that we will use throughout
this paper.

Let $(Y,\Sigma,\nu)$ be a $\sigma$-finite measure space such 
that $L_2(Y)$ is separable.
We consider in general the vector spaces over the complex scalar field,
unless explicitly specified otherwise.

We need various characterisations for an operator on $L_2(Y)$
to be a multiplication operator.
The major step is originally due to Zaanen \cite{Zaa2} Theorem~7, where the 
equivalence of \ref{tposvec201-1} and \ref{tposvec201-2} in 
the following theorem was established for $L_p$ with $p \in (0,\infty)$.

\begin{thm} \label{tposvec201}
Let $Q \colon L_2(Y) \to L_2(Y)$ be a bounded operator.
Then the following are equivalent.
\begin{tabeleq}
\item \label{tposvec201-1}
$Q$ is a multiplication operator.
\item \label{tposvec201-2}
If $f,g \in L_2(Y)$ and $f \, g = 0$ a.e., then $(Qf) \, g = 0$ a.e.
\item \label{tposvec201-3}
$\nu([Qf \neq 0] \cap [f=0]) = 0$ for all $f \in L_2(Y)$.
\item \label{tposvec201-4}
$Q (\one_A \, f) = \one_A \, Qf$ for all $f \in L_2(Y)$ and measurable $A \subset Y$.
\item \label{tposvec201-5}
There exists a $c > 0$ such that $|Q f| \leq c \, |f|$ for all $f \in L_2(Y)$.
\item \label{tposvec201-6}
The operator $Q$ is the linear combination of bounded operators $T$ such that
both $I - T$ and $I + T$ are positive.
\end{tabeleq}
\end{thm}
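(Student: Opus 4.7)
The plan is to establish the cycle \ref{tposvec201-1} $\Rightarrow$ \ref{tposvec201-5} $\Rightarrow$ \ref{tposvec201-2} $\Rightarrow$ \ref{tposvec201-3} $\Rightarrow$ \ref{tposvec201-4} $\Rightarrow$ \ref{tposvec201-1}, and to attach \ref{tposvec201-6} via the two-sided link \ref{tposvec201-1} $\Rightarrow$ \ref{tposvec201-6} $\Rightarrow$ \ref{tposvec201-5}. The first four cycle steps are each essentially one-line observations. \ref{tposvec201-1} $\Rightarrow$ \ref{tposvec201-5} is just $|M_m f| = |m|\,|f| \leq \|m\|_\infty |f|$, and \ref{tposvec201-5} $\Rightarrow$ \ref{tposvec201-2} uses that on $[g \neq 0]$ the relation $fg = 0$ forces $f = 0$, whence $|Qf| \leq c|f| = 0$ there. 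For \ref{tposvec201-2} $\Rightarrow$ \ref{tposvec201-3} I would exhaust $Y = \bigcup_n Y_n$ with $\nu(Y_n) < \infty$ and feed the pair $(f, \one_{A_n})$ with $A_n = [Qf \neq 0] \cap [f = 0] \cap Y_n$ into \ref{tposvec201-2}: then $f \one_{A_n} = 0$ yields $(Qf) \one_{A_n} = 0$, forcing $\nu(A_n) = 0$. For \ref{tposvec201-3} $\Rightarrow$ \ref{tposvec201-4} I would split $f = \one_A f + \one_{A^c} f$ and observe that the zero set of $\one_A f$ contains $A^c$, so \ref{tposvec201-3} gives $\one_{A^c} Q(\one_A f) = 0$; combining this with the symmetric statement for $\one_{A^c} f$ produces $Q(\one_A f) = \one_A Qf$.

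The substantive step is \ref{tposvec201-4} $\Rightarrow$ \ref{tposvec201-1}. By $\sigma$-finiteness I would reduce to the case $\nu(Y) < \infty$, arguing on each piece of a countable partition $Y = \bigsqcup_n Y_n$ separately; \ref{tposvec201-4} shows that $Q$ preserves each $L_2(Y_n)$, and the multipliers obtained on the pieces can be glued since their sup-norms are all bounded by $\|Q\|$. Under $\nu(Y) < \infty$, the constant $\one_Y$ lies in $L_2(Y)$; setting $m := Q \one_Y$ and applying \ref{tposvec201-4} with $f = \one_Y$ gives $Q \one_A = \one_A m$ for every measurable $A$, which extends by linearity to $Q s = m s$ on simple functions. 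Testing on $\one_B$ for $B = \{|m| > \|Q\|\}$ would yield $\|M_m \one_B\|_2 > \|Q\|\,\|\one_B\|_2$, so this set must have measure zero and $m \in L_\infty(Y)$; density of simple functions then transfers $Q = M_m$ to the whole of $L_2(Y)$.

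For the equivalence with \ref{tposvec201-6}, the forward direction \ref{tposvec201-1} $\Rightarrow$ \ref{tposvec201-6} comes from writing $m = u + iv$ with $u, v$ real-valued and bounded and noting that the normalised multipliers $M_{u/\|u\|_\infty}$ and $M_{v/\|v\|_\infty}$ both satisfy $I \pm T \geq 0$ (as operators mapping $L_2(Y)_+$ into $L_2(Y)_+$). Conversely, assume both $I - T$ and $I + T$ map $L_2(Y)_+$ into $L_2(Y)_+$. Then $-f \leq Tf \leq f$ for every $f \in L_2(Y)_+$; in particular $T$ sends real-valued functions to real-valued functions, and the decomposition $f = f^+ - f^-$ gives $|Tf| \leq f^+ + f^- = |f|$ for all real $f$. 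For complex $f = f_1 + i f_2$ one concludes $|Tf|^2 = (Tf_1)^2 + (Tf_2)^2 \leq f_1^2 + f_2^2 = |f|^2$, which is \ref{tposvec201-5} with constant $1$; a finite linear combination inflates the constant by the sum of the moduli of the coefficients. The only real obstacle is the reduction and boundedness argument in \ref{tposvec201-4} $\Rightarrow$ \ref{tposvec201-1}; the remaining implications are bookkeeping together with the pleasant algebraic interplay between order-positivity and pointwise domination.
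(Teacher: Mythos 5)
Your proof is correct, and most of it parallels the paper's argument modulo a cosmetic reshuffling of the cycle: the implications $\ref{tposvec201-1}\Rightarrow\ref{tposvec201-5}$, $\ref{tposvec201-5}\Rightarrow\ref{tposvec201-2}\Rightarrow\ref{tposvec201-3}\Rightarrow\ref{tposvec201-4}$, $\ref{tposvec201-1}\Rightarrow\ref{tposvec201-6}$ and $\ref{tposvec201-6}\Rightarrow\ref{tposvec201-5}$ are the same short observations (your $\ref{tposvec201-3}\Rightarrow\ref{tposvec201-4}$ is exactly the paper's). The one substantive divergence is $\ref{tposvec201-4}\Rightarrow\ref{tposvec201-1}$: the paper simply cites \cite{ArT} Proposition~1.7, whereas you give a self-contained argument — reduce to $\nu(Y)<\infty$ via a $\sigma$-finite partition, noting that $\ref{tposvec201-4}$ forces $Q$ to leave each $L_2(Y_n)$ invariant and that the local multipliers are uniformly bounded by $\|Q\|$, hence glue; on a finite piece set $m=Q\one_Y$, deduce $Q\one_A=\one_A m$ from $\ref{tposvec201-4}$, bound $\|m\|_\infty\leq\|Q\|$ by testing on $\one_{[|m|>\|Q\|]}$, and extend to all of $L_2(Y)$ by density of simple functions. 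This replaces an opaque citation with a transparent (if longer) proof, which is a genuine gain. A minor cosmetic improvement: your $\ref{tposvec201-6}\Rightarrow\ref{tposvec201-5}$ exploits orthogonality of real and imaginary parts to get the clean constant $1$ for a single generator (then $\sum|c_i|$ for the linear combination), instead of the paper's crude bound of $4$.
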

\begin{proof}
The implications 
\ref{tposvec201-1}$\Rightarrow$\ref{tposvec201-2}$\Rightarrow$\ref{tposvec201-3} 
are trivial.
For the implication \ref{tposvec201-3}$\Rightarrow$\ref{tposvec201-4}
note that $\nu([Q (\one_A \, f) \neq 0] \cap A^{\rm c})
\leq \nu([Q (\one_A \, f) \neq 0] \cap [\one_A \, f = 0]) = 0$.
Hence $Q (\one_A \, f) = \one_A \, Q (\one_A \, f)$
and $\one_A \, Q f = \one_A \, Q (\one_A \, f) + \one_A \, Q (\one_{A^{\rm c}} \, f)
= \one_A \, Q (\one_A \, f) = Q (\one_A \, f)$.
A short proof for the implication \ref{tposvec201-4}$\Rightarrow$\ref{tposvec201-1}
is in \cite{ArT} Proposition~1.7.

The implications 
\ref{tposvec201-1}$\Rightarrow$\ref{tposvec201-5}$\Rightarrow$\ref{tposvec201-3} 
and \ref{tposvec201-1}$\Rightarrow$\ref{tposvec201-6}
are trivial.
Finally we prove \ref{tposvec201-6}$\Rightarrow$\ref{tposvec201-5}.
Let $T$ be a bounded operator such that
both $I - T$ and $I + T$ are positive.
Let $u \in L_2(Y)$ with $u \geq 0$.
Then $u - T u \geq 0$ and $u + T u \geq 0$.
Hence $|T u| \leq |u|$.
Therefore if $u \in L_2(Y)$, then $|T u| \leq 4 |u|$ and \ref{tposvec201-5}
is valid.
\end{proof}

\begin{cor} \label{cposvec202}
Let $Q \colon L_2(Y) \to L_2(Y)$ be a bounded operator.
Suppose $Q$ is not a multiplication operator.
Then there exist $f \in L_2(Y,\Ri)$ and a measurable set $B \subset Y$
such that $f \geq 0$, $B \subset [f = 0]$, $0 < \nu(B) < \infty$
and $(Q f, \one_B)_{L_2(Y)} \neq 0$.
\end{cor}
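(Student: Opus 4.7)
The plan is to exploit the equivalence of \ref{tposvec201-1} and \ref{tposvec201-3} in Theorem~\ref{tposvec201}: since $Q$ is not a multiplication operator, there exists a (possibly complex-valued) $f_0 \in L_2(Y)$ with $\nu([Qf_0 \neq 0] \cap [f_0 = 0]) > 0$. I would then carry out a three-stage reduction: first pass to a real-valued witness, then to a non-negative one, and finally cut down the resulting set to obtain $B$.

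Write $f_0 = h_1 + i h_2$ with $h_1, h_2 \in L_2(Y,\Ri)$. By complex linearity of $Q$ we have $Qf_0 = Qh_1 + i Q h_2$, so $[Qf_0 \neq 0] \subset [Qh_1 \neq 0] \cup [Qh_2 \neq 0]$, and moreover $[f_0 = 0] \subset [h_1 = 0] \cap [h_2 = 0]$. Consequently one of $[Qh_j \neq 0] \cap [h_j = 0]$ has positive measure. Picking such a real-valued $h$ and splitting $h = h^+ - h^-$, the identities $Qh = Qh^+ - Qh^-$ and $[h = 0] = [h^+ = 0] \cap [h^- = 0]$ let me repeat the same argument to select one of $h^\pm$, which I call $f$. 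The outcome is a real-valued $f \geq 0$ in $L_2(Y)$ with $\nu([Qf \neq 0] \cap [f=0]) > 0$.

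To extract $B$, decompose $Qf = u + iv$ into real measurable components and observe
\[
[Qf \neq 0] = [u > 0] \cup [u < 0] \cup [v > 0] \cup [v < 0].
\]
At least one of the four intersections with $[f=0]$ has positive measure; after possibly replacing $Q$ by $-Q$, $\pm iQ$ (which preserves whether $Q$ is a multiplication operator), I may assume it is $[u > 0] \cap [f = 0]$. Exhausting by $[u > 1/n] \cap [f=0]$, some level set $E_n$ has positive measure, and by $\sigma$-finiteness I pick a measurable $B \subset E_n$ with $0 < \nu(B) < \infty$. Then $B \subset [f=0]$ and
\[
\RRe\, (Qf, \one_B)_{L_2(Y)} \;=\; \int_B u \, d\nu \;\geq\; \nu(B)/n \;>\; 0,
\]
so $(Qf, \one_B)_{L_2(Y)} \neq 0$, which is precisely what the corollary asserts.

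I do not anticipate a substantive obstacle; the argument is essentially linear and measure-theoretic bookkeeping. The only point deserving care is preserving the inclusion $[f_0 = 0] \subset [h = 0] \subset [f = 0]$ through each reduction, which is what guarantees that the failure of the multiplication property detected by $f_0$ is still detected by the final real, non-negative witness $f$.
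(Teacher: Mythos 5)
Your argument is correct and follows essentially the same route as the paper: invoke Theorem~\ref{tposvec201}\ref{tposvec201-3}$\Rightarrow$\ref{tposvec201-1} to get a witness $f_0$, reduce in stages to a real nonnegative witness $f$ via the identities $[Qf\neq 0]\subset\bigcup[\,\cdot\,]$ and $[f=0]=\bigcap[\,\cdot\,]$, then pick out a sign/quadrant of $Qf$ with a positive-measure intersection and extract $B$ by $\sigma$-finiteness. The paper packages the complex-to-nonnegative reduction in one step via $(\RRe(i^k f))^+$, $k=0,\dots,3$, and observes directly that $\RRe i^k (Qg,\one_B)>0$ without your $1/n$ level-set trick or the device of replacing $Q$ by $\pm Q,\pm iQ$, but these are only cosmetic differences.
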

\begin{proof}
By Theorem~\ref{tposvec201}\ref{tposvec201-3}$\Rightarrow$\ref{tposvec201-1} 
there exists an $f \in L_2(Y)$
such that the measure $\nu([Qf \neq 0] \cap [f=0]) > 0$.
Because $[Qf \neq 0] \subset \bigcup_{k=0}^3 [Q ((\RRe( i^k f))^+) \neq 0]$
and $[f=0] = \bigcap_{k=0}^3 [(\RRe (i^k f))^+ = 0]$
it follows that 
\[
0 < \nu([Qf \neq 0] \cap [f=0]) 
\leq \sum_{k=0}^3 \nu([Q ((\RRe( i^k f))^+) \neq 0] \cap [(\RRe (i^k f))^+ = 0])
.  \]
Hence there exists a $g \in L_2(Y,\Ri)$ such that 
$g \geq 0$ and 
$\nu([Qg \neq 0] \cap [g=0]) > 0$.
Finally $[Qg \neq 0] \subset \bigcup_{k=0}^3 [\RRe (i^k Qg) > 0]$.
Consequently there is a $k \in \{ 0,1,2,3 \} $ such that 
$\nu([\RRe (i^k Qg) > 0] \cap [g=0]) > 0$.
Since $(Y,\Sigma,\nu)$ is $\sigma$-finite, there exists a measurable 
$B \subset [\RRe (i^k Qg) > 0] \cap [g=0]$ such that $0 < \nu(B) < \infty$.
Then $\RRe i^k (Qg, \one_B)_{L_2(Y)} > 0$.
\end{proof}

Next fix an open set $\Omega \subset \Ri^d$.
In a natural way the spaces $L_2(\Omega, L_2(Y))$
and $L_2(\Omega \times Y)$ are unitarily equivalent.
Explicitly, define $\Phi \colon L_2(\Omega \times Y) \to L_2(\Omega, L_2(Y))$
by 
\begin{equation}
( \Phi(\alpha) ) (x) = \alpha(x,\cdot)
\label{epposvec230;1}
\end{equation}
Then $\Phi$ is a unitary map by \cite{HNVW1} Proposition~1.1.24.
Given a map $C \colon \Omega \to \cl(L_2(Y))$
such that 
$x \mapsto (C(x) f, g)_{L_2(Y)}$ is a bounded measurable function
for all $f,g \in L_2(Y)$,
one can use \cite{HNVW1} Proposition~1.1.28 to
define a map $T \colon L_2(\Omega,L_2(Y)) \to L_2(\Omega,L_2(Y))$
by $(T u)(x) = C(x) \, u(x)$.
Using the unitary map one associates to $T$ a map 
$\widetilde T \colon L_2(\Omega \times Y) \to L_2(\Omega \times Y)$.
We characterise when $\widetilde T$ is a multiplication operator in terms 
of when the $C(x)$ are multiplication operators.

\begin{prop} \label{pposvec230}
Let $C \colon \Omega \to \cl(L_2(Y))$ be a function such that 
$x \mapsto (C(x) f, g)_{L_2(Y)}$ is a bounded measurable function
for all $f,g \in L_2(Y)$.
Define $T \colon L_2(\Omega,L_2(Y)) \to L_2(\Omega,L_2(Y))$
by $(T u)(x) = C(x) \, u(x)$.
Define $\Phi \colon L_2(\Omega \times Y) \to L_2(\Omega, L_2(Y))$
as in {\rm (\ref{epposvec230;1})} and define 
$\widetilde T \colon L_2(\Omega \times Y) \to L_2(\Omega \times Y)$
by $\widetilde T = \Phi^{-1} \, T \, \Phi$.
Then the following are equivalent.
\begin{tabeleq}
\item \label{pposvec230-1}
For almost every $x \in \Omega$ the map $C(x)$ is a multiplication 
operator on $L_2(Y)$ [with a real-valued function].
\item \label{pposvec230-2}
$\widetilde T$ is a multiplication operator on $L_2(\Omega \times Y)$
 [with a real-valued function].
\end{tabeleq}
\end{prop}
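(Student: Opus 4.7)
Both implications are handled via the unitary $\Phi$ together with the separability of $L_2(Y)$. The direction \ref{pposvec230-2}$\Rightarrow$\ref{pposvec230-1} is obtained by plugging simple tensors into the identity $\Phi \widetilde T = T \Phi$ and then using a countable dense set in $L_2(Y)$. The direction \ref{pposvec230-1}$\Rightarrow$\ref{pposvec230-2} is harder: for a.e.\ $x$ one obtains a multiplier $m_x \in L_\infty(Y)$ defined only up to a null set, and these functions must be glued into a single jointly measurable $m \colon \Omega \times Y \to \Ci$.

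\textbf{Direction \ref{pposvec230-2}$\Rightarrow$\ref{pposvec230-1}.} Assume $\widetilde T$ is multiplication by some $m \in L_\infty(\Omega \times Y)$. For each $f \in L_2(Y)$ and each measurable $\Omega_0 \subset \Omega$ of finite measure, apply $\widetilde T$ to $\alpha(x,y) = \one_{\Omega_0}(x) f(y)$. Using $T \Phi = \Phi \widetilde T$ and $\Phi \alpha = \one_{\Omega_0}(\cdot) f$, one reads off that $C(x) f = m(x, \cdot) f$ in $L_2(Y)$ for a.e.\ $x \in \Omega_0$. Exhausting $\Omega$ by countably many such $\Omega_0$ and Fubini give, for a.e.\ $x$, that $m(x, \cdot) \in L_\infty(Y)$ with $\|m(x,\cdot)\|_\infty \le \|m\|_\infty$. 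Choosing a countable dense subset $\{f_n\} \subset L_2(Y)$, which exists by separability, and taking the union of the $f_n$-exceptional null sets, yields a single null set $N \subset \Omega$ such that for $x \notin N$ the two bounded operators $C(x)$ and $f \mapsto m(x,\cdot) f$ agree on $\{f_n\}$, hence everywhere. Thus $C(x)$ is multiplication by $m(x,\cdot)$ for a.e.\ $x$; if $m$ is real-valued, then so is $m(x, \cdot)$ for a.e.\ $x$ by Fubini.

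\textbf{Direction \ref{pposvec230-1}$\Rightarrow$\ref{pposvec230-2}, and the main obstacle.} Assume $C(x)$ is multiplication by some $m_x \in L_\infty(Y)$ for a.e.\ $x$, with $\|m_x\|_\infty \le M := \operatorname*{ess\,sup}_x \|C(x)\| < \infty$. The main task is to paste the ambiguous sections $m_x$ into one jointly measurable function. By $\sigma$-finiteness pick countable partitions $\Omega = \bigsqcup_j \Omega_j$ and $Y = \bigsqcup_i Y_i$ into sets of finite measure. For each $i$, the map $x \mapsto C(x) \one_{Y_i}$ is weakly measurable by hypothesis and takes values in the separable space $L_2(Y)$, so by the Pettis theorem it is strongly measurable; restricted to $\Omega_j$ it is bounded in norm by $M\,\nu(Y_i)^{1/2}$, hence belongs to $L_2(\Omega_j, L_2(Y))$. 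Applying $\Phi^{-1}$ produces a measurable function $\tilde u_{ij} \in L_2(\Omega_j \times Y)$ with $\tilde u_{ij}(x,y) = (C(x)\one_{Y_i})(y) = m_x(y)\one_{Y_i}(y)$ for a.e.\ $(x,y)$. Setting $m(x,y) := \tilde u_{ij}(x,y)$ for $(x,y) \in \Omega_j \times Y_i$ defines a jointly measurable $m \in L_\infty(\Omega \times Y)$ with $|m| \le M$ a.e.\ and $m(x,y) = m_x(y)$ a.e. The identity $(\Phi^{-1} T \Phi \alpha)(x,y) = (C(x)(\alpha(x,\cdot)))(y) = m_x(y) \alpha(x,y) = m(x,y) \alpha(x,y)$ (verified first for $\alpha$ a simple tensor and then extended by $L_2$-continuity) shows that $\widetilde T$ is multiplication by $m$, and real-valuedness of each $m_x$ transfers to $m$. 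The only nontrivial step is the gluing just described; everything else is routine bookkeeping.
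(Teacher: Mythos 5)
Your proof is correct, but for the main implication \ref{pposvec230-1}$\Rightarrow$\ref{pposvec230-2} you take a genuinely different route from the paper. The paper never constructs the multiplier function at all: it invokes the characterisation in Theorem~\ref{tposvec201}\ref{tposvec201-2} that a bounded $Q$ on $L_2$ is a multiplication operator iff $f\,g=0$ a.e.\ implies $(Qf)\,g=0$ a.e., and then a two-line Fubini argument (applied to $\alpha_x\,\beta_x$ and to $(C(x)\alpha_x)\,\beta_x$) transfers this locality property from the fibres $C(x)$ to $\widetilde T$, with a second Fubini application to handle the real-valued case. This sidesteps entirely the joint-measurability issue that you correctly identify as the main obstacle and then resolve by a gluing construction: Pettis's theorem to upgrade weak to strong measurability of $x\mapsto C(x)\one_{Y_i}$, countable partitions of $\Omega$ and $Y$ into finite-measure pieces, and passage through $\Phi^{-1}$ to produce a single jointly measurable $m$. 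Your approach is more constructive and yields the multiplier explicitly, which is informative in its own right, but it is also longer and relies on machinery (Pettis measurability, the $\sigma$-finite exhaustions) that the paper's proof avoids by exploiting the algebraic characterisation of multiplication operators instead. For the converse \ref{pposvec230-2}$\Rightarrow$\ref{pposvec230-1} the paper says only that it is easy; your argument via simple tensors, Fubini for the $L_\infty$ bound on sections, and a countable dense set is a correct filling-in of that gap.
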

\begin{proof}
`\ref{pposvec230-1}$\Rightarrow$\ref{pposvec230-2}'.
If $\alpha \colon \Omega \times Y \to \Ci$ is a  
function and $x \in \Omega$, then define $\alpha_x = \alpha(x,\cdot) \colon Y \to \Ci$.
If $\alpha \in L_2(\Omega \times Y)$, then $\alpha_x \in L_2(Y)$
for almost every $x \in \Omega$.
Note that $(\widetilde T \alpha)_x = C(x) (\alpha_x)$ for almost every $x \in \Omega$.

Now let $\alpha,\beta \in L_2(\Omega \times Y)$ and suppose that
$\alpha \, \beta = 0$ almost everywhere on $\Omega \times Y$.
Then by Fubini's theorem $\alpha_x \, \beta_x = 0$ a.e.\ on $Y$ 
for almost every $x \in \Omega$.
Since $C(x)$ is a multiplication operator on $L_2(Y)$ for 
a.e.\ $x \in \Omega$, one obtains 
\[
(\widetilde T \alpha)_x \, \beta_x
= (C(x) \alpha_x) \, \beta_x 
= 0
\quad \mbox{a.e.\ on } Y
\]
for a.e.\ $x \in \Omega$.
Because $(\widetilde T \alpha) \, \beta$ is measurable 
from $\Omega \times Y$ into $\Ci$ one can use again Fubini's theorem 
to deduce that $(\widetilde T \alpha) \, \beta = 0$ a.e.\ on $\Omega \times Y$.
Then $\widetilde T$ is a multiplication operator by 
Theorem~\ref{tposvec201}\ref{tposvec201-2}$\Rightarrow$\ref{tposvec201-1}.

If $C(x)$ is a multiplication with a real-valued function for almost 
every $x \in \Omega$, then $C(x)$ leaves $L_2(Y,\Ri)$ invariant.
Let $\alpha,\beta \in L_2(\Omega \times Y,\Ri)$.
Then Fubini implies that 
$(\widetilde T \alpha, \beta)_{L_2(\Omega \times Y)}
= \int_\Omega ( (\widetilde T \alpha)_x, \beta_x)_{L_2(Y)} \in \Ri$.
Hence $\widetilde T$ leaves $L_2(\Omega \times Y,\Ri)$ invariant
and consequently it is a multiplication operator with a real-valued function.

The implication \ref{pposvec230-2}$\Rightarrow$\ref{pposvec230-1}
is easy.
\end{proof}

For the proof of Theorem~\ref{tposvec101} we make use of 
the following commutation property of partial derivatives
of $H^1_0(\Omega)$ functions.

\begin{lemma} \label{lposvec203}
Let $k,l \in \{ 1,\ldots,d \} $ and $\varphi,\psi \in H^1_0(\Omega)$.
Then 
$\int_\Omega (\partial_k \varphi) \, \partial_l \psi 
= \int_\Omega (\partial_l \varphi) \, \partial_k \psi$.
\end{lemma}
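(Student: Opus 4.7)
The plan is to reduce to smooth test functions by density and then use the equality of mixed partial derivatives (Schwarz's theorem) for such functions.

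First I would note that both sides of the claimed identity define continuous bilinear forms on $H^1_0(\Omega) \times H^1_0(\Omega)$. Indeed, by the Cauchy--Schwarz inequality,
\[
\Big| \int_\Omega (\partial_k \varphi) \, \partial_l \psi \Big|
\leq \|\partial_k \varphi\|_{L_2(\Omega)} \, \|\partial_l \psi\|_{L_2(\Omega)}
\leq \|\varphi\|_{H^1(\Omega)} \, \|\psi\|_{H^1(\Omega)},
\]
and analogously for the right-hand side. Since by definition $C_c^\infty(\Omega)$ is dense in $H^1_0(\Omega)$, it suffices to establish the identity when $\varphi, \psi \in C_c^\infty(\Omega)$.

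For $\varphi, \psi \in C_c^\infty(\Omega)$ the identity follows from a double integration by parts combined with equality of mixed classical partial derivatives. Since $\varphi$ and $\psi$ have compact support in $\Omega$, the boundary terms vanish and we obtain
\[
\int_\Omega (\partial_k \varphi) \, \partial_l \psi \, dx
= - \int_\Omega \varphi \, \partial_k \partial_l \psi \, dx
= - \int_\Omega \varphi \, \partial_l \partial_k \psi \, dx
= \int_\Omega (\partial_l \varphi) \, \partial_k \psi \, dx,
\]
where the middle equality uses Schwarz's theorem on the interchange of partial derivatives of a smooth function.

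There is really no obstacle in this proof: the case $k=l$ is trivial, and for $k \neq l$ the argument is just density combined with commutativity of classical partial derivatives. The only point that requires a moment of care is ensuring that each step of integration by parts is justified, which is immediate from the compact support of $\varphi$ and $\psi$.
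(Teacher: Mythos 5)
Your proof is correct and follows essentially the same route as the paper: reduce to $C_c^\infty(\Omega)$ by density, then integrate by parts twice and invoke equality of mixed partial derivatives for smooth functions.
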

\begin{proof}
If $\varphi,\psi \in C_c^\infty(\Omega)$, then 
$(\partial_k \varphi, \partial_l \psi)_{L_2(\Omega)}
= - (\varphi, \partial_k \partial_l \psi)_{L_2(\Omega)}
= - (\varphi, \partial_l \partial_k \psi)_{L_2(\Omega)}
= (\partial_l \varphi, \partial_k \psi)_{L_2(\Omega)}$.
Then by density 
$(\partial_k \varphi, \partial_l \psi)_{L_2(\Omega)} 
= (\partial_l \varphi, \partial_k \psi)_{L_2(\Omega)}$
for all $\varphi,\psi \in H^1_0(\Omega)$
and the lemma follows.
\end{proof}

Next we construct test functions relative to a pair of indices.

\begin{lemma} \label{lposvec204}
Let $\tau \in \Ri$.
Let $\tilde k,\tilde l \in \{ 1,\ldots,d \} $.
Then there exist $\varphi,\psi \in W^{1,\infty}_0((-1,1)^d) \subset W^{1,\infty}(\Ri^d)$ 
such that 
$\varphi \geq 0$, $\psi \geq 0$, 
\[
\int_{\Ri^d} (\partial_{\tilde l} \varphi) \, \partial_{\tilde k} \psi 
= \int_{\Ri^d} (\partial_{\tilde k} \varphi) \, \partial_{\tilde l} \psi 
= \tau
\]
and 
\begin{equation}
\int_{\Ri^d} (\partial_l \varphi) \, \partial_k \psi 
= 0 
\label{elposvec204;1}
\end{equation}
for all $k,l \in \{ 1,\ldots,d \} $ with 
$(k,l) \not\in \{ (\tilde k,\tilde l), (\tilde l,\tilde k) \} $.
\end{lemma}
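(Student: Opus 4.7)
The plan is to take $\varphi$ and $\psi$ as tensor products of one-dimensional nonnegative tent functions in $W^{1,\infty}_0((-1,1))$, so that each integral $M_{kl}:=\int_{\Ri^d}(\partial_l\varphi)(\partial_k\psi)$ factorises, by Fubini, into a product of one-dimensional integrals; the tents are chosen coordinate by coordinate so that every undesired entry of $M$ acquires at least one vanishing factor, while the factor at $(\tilde k,\tilde l)$ can be tuned to any prescribed value. I would work with four explicit building blocks: the symmetric single tent $\rho_0(x)=(1-|x|)_+$, the symmetric double tent $\rho_2$ with peaks at $\pm\tfrac12$ and value $0$ at $0$, the left tent $\rho_L$ supported on $[-1,0]$ with peak at $-\tfrac12$, and its mirror image $\rho_R$. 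A short piecewise-linear computation yields
\[
\int \rho_0\rho_2 = \tfrac12,\qquad \int \rho_0'\rho_2' = 0,\qquad \int \rho_0\rho_2' = \int \rho_0'\rho_2 = 0
\]
(the last two by the evenness of $\rho_0$ and $\rho_2$), together with
\[
\int \rho_0\rho_L = \tfrac14,\qquad \int \rho_0'\rho_L' = 0,\qquad \int \rho_0\rho_L' = -\tfrac12,\qquad \int \rho_0'\rho_L = \tfrac12,
\]
and the mirror identities for $\rho_R$.

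For $\tilde k=\tilde l$ I set $\varphi(x)=\alpha(x_{\tilde k})\prod_{j\neq\tilde k}\rho_0(x_j)$ and $\psi(x)=\beta(x_{\tilde k})\prod_{j\neq\tilde k}\rho_2(x_j)$ with nonnegative $\alpha,\beta\in W^{1,\infty}_0((-1,1))$. For every $(k,l)\neq(\tilde k,\tilde k)$ the factorisation acquires one of the vanishing factors $\int\rho_0\rho_2'$, $\int\rho_0'\rho_2$ or $\int\rho_0'\rho_2'$, so only $M_{\tilde k\tilde k}=\int\alpha'\beta'\cdot(\tfrac12)^{d-1}$ survives. Taking $\alpha=\beta=c\rho_0$ and scaling $c>0$ hits every positive $\tau$; taking $\alpha=\rho_L$ together with a scaled copy of the tent at $0$ supported in $[-\tfrac12,\tfrac12]$ gives $\int\alpha'\beta'<0$ (their slopes have opposite signs on the only overlap slab $(-\tfrac12,0)$), and so, after scaling, delivers every negative $\tau$; $\tau=0$ is handled by $\varphi=\psi=0$.

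For $\tilde k\neq\tilde l$ I set $\varphi(x)=\rho_0(x_{\tilde k})\rho_0(x_{\tilde l})\prod_{j\notin\{\tilde k,\tilde l\}}\rho_0(x_j)$ and $\psi(x)=\gamma(x_{\tilde k})\delta(x_{\tilde l})\prod_{j\notin\{\tilde k,\tilde l\}}\rho_2(x_j)$ with rescaled $\gamma,\delta\in\{\rho_L,\rho_R\}$. For every index pair outside $\{(\tilde k,\tilde l),(\tilde l,\tilde k)\}$ at least one coordinate contributes a vanishing factor: the diagonal entries $(\tilde k,\tilde k)$ and $(\tilde l,\tilde l)$ die through $\int\rho_0'\rho_L'=\int\rho_0'\rho_R'=0$, and any pair touching some $j\notin\{\tilde k,\tilde l\}$ dies through one of $\int\rho_0\rho_2'$, $\int\rho_0'\rho_2$, $\int\rho_0'\rho_2'$. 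The remaining entry $M_{\tilde k\tilde l}=\int\rho_0\gamma'\cdot\int\rho_0'\delta\cdot(\tfrac12)^{d-2}$ takes each sign $\pm(\tfrac12)^d$ for appropriate $\gamma,\delta\in\{\rho_L,\rho_R\}$, and positive scaling of $\gamma$ then realises every $\tau\in\Ri$; the equality $M_{\tilde k\tilde l}=M_{\tilde l\tilde k}$ is automatic from Lemma~\ref{lposvec203}.

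The only real obstacle is reconciling $\varphi,\psi\geq 0$ with the possibility of negative $\tau$: for nonnegative $\rho,\sigma\in W^{1,\infty}_0((-1,1))$ with $\rho\sigma=0$ a.e.\ one automatically has $\rho'\sigma'=0$ a.e., so disjoint-support tricks alone cannot produce negative bilinear contributions. The asymmetric placement of $\rho_L,\rho_R$ against the symmetric $\rho_0$ and $\rho_2$ is precisely what circumvents this.
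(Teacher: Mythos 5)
Your construction is correct and follows essentially the same approach as the paper: both proofs take $\varphi$ and $\psi$ to be tensor products of the same one-dimensional tent functions (the symmetric tent $\eta=\rho_0$, the double tent $\rho=\rho_2$, and the shifted/rescaled half-tents $\rho_L,\rho_R$, with a case split on the sign of $\tau$ and on whether $\tilde k=\tilde l$), and your listed one-dimensional integral identities match the ones used implicitly in the paper's Cases~1--5. The only cosmetic difference is that the paper hard-codes the scaling constant into $\varphi$ case by case, while you tabulate the building-block integrals and then observe that a positive scalar multiple closes the argument; the functions themselves coincide.
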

\begin{proof}
Define $\eta \colon \Ri \to \Ri$ by $\eta(t) = (1 - |t|)^+$.
Further we define $\rho \colon \Ri \to \Ri$ by 
$\rho(t) = \eta(2(t - \tfrac{1}{2})) + \eta(2(t + \tfrac{1}{2}))$.
Then $\eta,\rho \in W^{1,\infty}_0(-1,1)$
and 
\[
\int_\Ri \eta'(t) \, \rho(t) \, dt
= \int_\Ri \eta(t) \, \rho'(t) \, dt
= \int_\Ri \eta'(t) \, \rho'(t) \, dt
= 0
\mbox{ and } 
\int_\Ri \eta(t) \, \rho(t) \, dt = \frac{1}{2}
.  \]
In the following cases the functions $\varphi$ and $\psi$ 
will be obtained as products of functions that each 
depend only on a single one of the variables $x_1,\ldots,x_d$.
Hence if $k \not\in \{ \tilde k,\tilde l \} $, then Fubini's theorem
and considering the integral over the $k$-th variable
gives (\ref{elposvec204;1}) in each of the following cases.
The same applies if $l \not\in \{ \tilde k,\tilde l \} $.

We distinguish five cases.

\noindent
{\bf Case 1.} Suppose $\tau > 0$ and $\tilde k = \tilde l$.
Define $\varphi,\psi \colon \Ri^d \to \Ri$ by 
\[
\varphi(x) 
= 2^{d-2} \, \tau \, 
\prod_{k = 1}^d \eta(x_k)
\quad \mbox{and} \quad 
\psi(x) 
= \eta(x_{\tilde l}) 
\prod_{k \neq \tilde l} \rho(x_k)
.  \]

\begin{table}[H]
\centering
\begin{tabular}{ccccc}
\hline
& $x_1$ & $x_2$ & $x_3$ & \ldots  \\
$\varphi$ & \tentone & \tentone & \tentone & \ldots \\
$\psi$ & \tentone & \tenttwo & \tenttwo & \ldots \\
\hline
\end{tabular}
\caption{$\tilde{k}=\tilde{l}=1$ with $\tau > 0$ (Case 1).}
\end{table}

\noindent
{\bf Case 2.} Suppose $\tau > 0$ and $\tilde k \neq \tilde l$.
Define $\varphi,\psi \colon \Ri^d \to \Ri$ by 
\begin{eqnarray*}
\varphi(x) 
& = & 2^d \, \tau \, \prod_{k = 1}^d \eta(x_k)
\quad \mbox{and}  \\
\psi(x) 
& = & \eta(2(x_{\tilde k} + \tfrac{1}{2})) \,
\eta(2(x_{\tilde l} - \tfrac{1}{2}))
\prod_{k \not\in \{ \tilde k, \tilde l \} } \rho(x_k)
.  
\end{eqnarray*}

\begin{table}[H]
\centering
\begin{tabular}{ccccc}
\hline
& $x_1$ & $x_2$ & $x_3$ & \ldots  \\
$\varphi$ & \tentone & \tentone & \tentone & \ldots \\
$\psi$ & \tentleft & \tentright & \tenttwo & \ldots \\
\hline
\end{tabular}
\caption{$(\tilde{k},\tilde{l})=(1,2)$ with $\tau > 0$ (Case 2).}
\end{table}

\noindent
{\bf Case 3.} Suppose $\tau < 0$ and $\tilde k = \tilde l$.
Define $\varphi,\psi \colon \Ri^d \to \Ri$ by 
\[
\varphi(x) 
= 2^{d-2} \, |\tau| \, \eta(2(x_{\tilde l} + \tfrac{1}{2})) 
\prod_{k \neq \tilde l} \eta(x_k)
\quad \mbox{and} \quad 
\psi(x) 
= \eta(2 x_{\tilde l}) 
\prod_{k \neq \tilde l} \rho(x_k)
.  \]

\begin{table}[H]
\centering
\begin{tabular}{ccccc}
\hline
& $x_1$ & $x_2$ & $x_3$ & \ldots  \\
$\varphi$ & \tentleft & \tentone & \tentone & \ldots \\
$\psi$ & \tentmid & \tenttwo & \tenttwo & \ldots \\\hline
\end{tabular}
\caption{$\tilde{k}=\tilde{l}=1$ with $\tau < 0$ (Case 3).}
\end{table}

\noindent
{\bf Case 4.} Suppose $\tau < 0$ and $\tilde k \neq \tilde l$.
Define $\varphi,\psi \colon \Ri^d \to \Ri$ by 
\begin{eqnarray*}
\varphi(x) 
& = & 2^d \, |\tau| \, \prod_{k = 1}^d \eta(x_k)
\quad \mbox{and}  \\ 
\psi(x) 
& = & \eta(2(x_{\tilde k} - \tfrac{1}{2})) \,
\eta(2(x_{\tilde l} - \tfrac{1}{2}))
\prod_{k \not\in \{ \tilde k, \tilde l \} } \rho(x_k)
.  
\end{eqnarray*}

\begin{table}[H]
\centering
\begin{tabular}{ccccc}
\hline
& $x_1$ & $x_2$ & $x_3$ & \ldots  \\
$\varphi$ & \tentone & \tentone & \tentone & \ldots \\
$\psi$ & \tentright & \tentright & \tenttwo & \ldots \\\hline
\end{tabular}
\caption{$(\tilde{k},\tilde{l}) = (1,2)$ with $\tau < 0$ (Case 4).}
\end{table}

\noindent
{\bf Case 5.} Suppose $\tau = 0$.
This case is trivial, choose $\varphi = \psi = 0$.

\smallskip

Then the lemma follows after straightforward calculations.
\end{proof}

We also need $L_2(Y)$-valued Sobolev spaces.
For details we refer to \cite{HNVW1} Subsections~2.5.a and 2.5.b,
and \cite{ArendtKreuter} Corollary~4.14.
For the convenience of the reader, recall that 
$H^1(\Omega,L_2(Y))$ is the Hilbert space of all 
$u \in L_2(\Omega, L_2(Y))$ for which there are $u_1,\ldots,u_d \in L_2(\Omega, L_2(Y))$
such that 
\[
\int_\Omega (\partial_k \tau)(x) \, u(x) \, dx
= - \int_\Omega \tau(x) \, u_k(x) \, dx
\]
for all $\tau \in C_c^\infty(\Omega)$
and 
$\|u\|_{H^1(\Omega,L_2(Y))}^2 = \|u\|_{L_2(\Omega,L_2(Y))}^2 + \sum_{k=1}^d \|u_k\|_{L_2(\Omega,L_2(Y))}^2$.
Furthermore, $H^1_0(\Omega,L_2(Y))$
is the closure in $H^1(\Omega,L_2(Y))$ of all infinitely differentiable 
functions from $\Omega$ into $L_2(Y)$ with compact support.
Using the unitary map (\ref{epposvec230;1}) it follows as in the proof of Lemma~7.6 in \cite{GT}
that $u^+ \in H^1(\Omega, L_2(Y,\Ri))$ and 
$\partial_k (u^+) = \one_{[u > 0]} \, \partial_k u$ for all 
$u \in H^1(\Omega,L_2(Y,\Ri))$ and $k \in \{ 1,\ldots,d \} $.

If $\varphi \in L_2(\Omega)$ and $f \in L_2(Y)$ then define 
$\varphi \otimes f \in L_2(\Omega,L_2(Y))$ by 
$(\varphi \otimes f)(x) = \varphi(x) \, f \in L_2(Y)$.
If $\Phi \colon L_2(\Omega \times Y) \to L_2(\Omega, L_2(Y))$
is the unitary map as in (\ref{epposvec230;1}), then 
$(\Phi^{-1}( \varphi \otimes f ))(x,y) = \varphi(x) \, f(y)$
for almost every $(x,y) \in \Omega \times Y$.

One has the following density properties.

\begin{lemma} \label{lposvec213}
\mbox{}
\begin{tabel}
\item \label{lposvec213-1}
The set
$\spann \{ \varphi \otimes f : 
    \varphi \in H^1_0(\Omega) \mbox{ and } f \in L_2(Y) \} $
is dense in $H^1_0(\Omega,L_2(Y))$.
\item \label{lposvec213-2}
The set
$\spann \{ \varphi \otimes f : 
    \varphi \in H^1(\Omega) \mbox{ and } f \in L_2(Y) \} $
is dense in $H^1(\Omega,L_2(Y))$.
\end{tabel}
\end{lemma}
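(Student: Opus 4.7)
The plan is to handle both parts simultaneously by projecting onto an orthonormal basis of $L_2(Y)$ and truncating. Since $L_2(Y)$ is separable, fix an orthonormal basis $(e_n)_{n \in \Ni}$ of $L_2(Y)$. For any $u \in H^1(\Omega, L_2(Y))$ set
\[
u_n(x) = (u(x), e_n)_{L_2(Y)} \qquad (x \in \Omega, \ n \in \Ni),
\]
so that $u_n \in L_2(\Omega)$ with $\sum_n \|u_n\|_{L_2(\Omega)}^2 = \|u\|_{L_2(\Omega,L_2(Y))}^2$ by Parseval.

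The first step is to check that $u_n$ has weak partial derivatives $\partial_k u_n(x) = (\partial_k u(x), e_n)_{L_2(Y)}$. For $\tau \in C_c^\infty(\Omega)$ one pulls $e_n$ inside the Bochner integrals using the continuity of $(\cdot,e_n)_{L_2(Y)}$ and applies the defining identity of the $L_2(Y)$-valued weak derivative from the excerpt. This yields
\[
\int_\Omega (\partial_k \tau)(x)\, u_n(x)\, dx
= \Big(\int_\Omega (\partial_k \tau)(x)\, u(x)\, dx,\, e_n\Big)_{L_2(Y)}
= -\int_\Omega \tau(x)\, (\partial_k u(x), e_n)_{L_2(Y)}\, dx,
\]
so $u_n \in H^1(\Omega)$ with $\partial_k u_n = (\partial_k u, e_n)_{L_2(Y)}$.

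Next I would consider the truncations
\[
u^N := \sum_{n=1}^N u_n \otimes e_n \in \spann\{\varphi \otimes f : \varphi \in H^1(\Omega),\ f \in L_2(Y)\}.
\]
Using Parseval pointwise, $\|u(x) - u^N(x)\|_{L_2(Y)}^2 = \sum_{n>N} |u_n(x)|^2$, which is bounded by $\|u(x)\|_{L_2(Y)}^2 \in L_1(\Omega)$ and tends to $0$ for a.e.\ $x$. Dominated convergence gives $u^N \to u$ in $L_2(\Omega,L_2(Y))$, and applying the same argument to $\partial_k u$ and its Fourier coefficients $(\partial_k u, e_n)_{L_2(Y)} = \partial_k u_n$ shows that $\partial_k u^N \to \partial_k u$ in $L_2(\Omega,L_2(Y))$ for each $k$. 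This proves~\ref{lposvec213-2}.

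For~\ref{lposvec213-1}, the only extra point is to verify that if $u \in H^1_0(\Omega, L_2(Y))$ then the coefficients $u_n$ lie in $H^1_0(\Omega)$, so that the same truncations $u^N$ lie in $\spann\{\varphi \otimes f : \varphi \in H^1_0(\Omega),\ f \in L_2(Y)\}$. By definition there exist $v^{(j)} \in C_c^\infty(\Omega, L_2(Y))$ with $v^{(j)} \to u$ in $H^1(\Omega,L_2(Y))$. Then $v^{(j)}_n := (v^{(j)}(\cdot), e_n)_{L_2(Y)} \in C_c^\infty(\Omega)$ and, by the continuity of the projection onto the $n$-th coefficient, $v^{(j)}_n \to u_n$ in $H^1(\Omega)$; hence $u_n \in H^1_0(\Omega)$. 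The rest of the argument is identical to the one for~\ref{lposvec213-2}. The main (mild) technical point is the justification of pulling $e_n$ through the Bochner integrals in the weak derivative computation; everything else is Parseval plus dominated convergence.
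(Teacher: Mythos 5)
Your proposal is correct and follows essentially the same strategy as the paper's proof: expand $u$ against an orthonormal basis $(e_n)$ of $L_2(Y)$, use pointwise Parseval plus dominated convergence to get convergence of the truncated sums $\sum_{n\le N} u_n\otimes e_n$ in $H^1(\Omega,L_2(Y))$, and observe that the scalar coefficients $u_n$ land in $H^1_0(\Omega)$ (resp.\ $H^1(\Omega)$). You merely spell out two details the paper leaves implicit, namely that $\partial_k u_n=(\partial_k u,e_n)_{L_2(Y)}$ and that $u_n\in H^1_0(\Omega)$ when $u\in H^1_0(\Omega,L_2(Y))$, and both of those verifications are correct.
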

\begin{proof}
`\ref{lposvec213-1}'.
We only give the proof in case $L_2(Y)$ is infinite dimensional.
Let $ \{ e_n : n \in \Ni \} $ be an orthonormal basis for $L_2(Y)$.
Let $u \in H^1_0(\Omega,L_2(Y))$.
Let $n \in \Ni$.
Define $\varphi_n \colon \Omega \to \Ci$ by 
$\varphi_n(x) = (u(x), e_n)_{L_2(Y)}$.
Then $\varphi_n \in H^1_0(\Omega)$.
If $N \in \Ni$, then 
$\|\sum_{n=1}^N \varphi_n(x) \, e_n\|_{L_2(Y)}
\leq \|u(x)\|_{L_2(Y)}$ for almost every $x \in \Omega$.
Hence it follows from the Lebesgue dominated convergence theorem that 
$\lim_{N \to \infty} \sum_{n=1}^N \varphi_n \otimes e_n = u$ in $L_2(\Omega,L_2(Y))$.
Similarly 
$\lim_{N \to \infty} \partial_k \sum_{n=1}^N \varphi_n \otimes e_n = \partial_k u$ in $L_2(\Omega,L_2(Y))$
for all $k \in \{ 1,\ldots,d \} $.
Therefore $\lim_{N \to \infty} \sum_{n=1}^N \varphi_n \otimes e_n = u$ in $H^1(\Omega,L_2(Y))$
and the statement follows.

`\ref{lposvec213-2}'.
This is similar.
\end{proof}

Lemmas~\ref{lposvec203} and \ref{lposvec213}\ref{lposvec213-1} can be combined
to give another commutation relation.

\begin{lemma} \label{lposvec231}
Let $B \in \cl(L_2(Y))$.
Then 
\[
\int_\Omega ( \Big( B (\partial_l u) \Big) (x), (\partial_k v)(x) )_{L_2(Y)} \, dx
= \int_\Omega ( \Big( B (\partial_k u) \Big) (x), (\partial_l v)(x) )_{L_2(Y)}  \, dx
\]
for all $k,l \in \{ 1,\ldots,d \} $ and $u,v \in H^1_0(\Omega,L_2(Y))$.
\end{lemma}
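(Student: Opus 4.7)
The plan is to reduce the claim to the scalar commutation of Lemma~\ref{lposvec203} on the dense set of elementary tensors given by Lemma~\ref{lposvec213}\ref{lposvec213-1}, and then extend by continuity. Fix $k,l \in \{1,\ldots,d\}$ and $B \in \cl(L_2(Y))$, and define two sesquilinear forms on $H^1_0(\Omega,L_2(Y)) \times H^1_0(\Omega,L_2(Y))$ by
\[
L(u,v) = \int_\Omega \bigl( (B \partial_l u)(x), (\partial_k v)(x) \bigr)_{L_2(Y)} \, dx
\quad \text{and} \quad
R(u,v) = \int_\Omega \bigl( (B \partial_k u)(x), (\partial_l v)(x) \bigr)_{L_2(Y)} \, dx,
\]
where I understand $(B w)(x) = B(w(x))$ for $w \in L_2(\Omega,L_2(Y))$. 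Since $\|B w\|_{L_2(\Omega,L_2(Y))} \leq \|B\| \, \|w\|_{L_2(\Omega,L_2(Y))}$, both $L$ and $R$ are bounded sesquilinear forms with norm at most $\|B\|$ on $H^1_0(\Omega,L_2(Y)) \times H^1_0(\Omega,L_2(Y))$, hence continuous.

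Next I verify $L = R$ on elementary tensors. For $u = \varphi \otimes f$ and $v = \psi \otimes g$ with $\varphi, \psi \in H^1_0(\Omega)$ and $f, g \in L_2(Y)$, one has $(\partial_l u)(x) = (\partial_l \varphi)(x) \, f$, so $(B \partial_l u)(x) = (\partial_l \varphi)(x) \, B f$ in $L_2(Y)$. Consequently
\[
L(\varphi \otimes f, \psi \otimes g)
= (B f, g)_{L_2(Y)} \, (\partial_l \varphi, \partial_k \psi)_{L_2(\Omega)}.
\]
By Lemma~\ref{lposvec203} applied to $\varphi, \psi \in H^1_0(\Omega)$, one has $(\partial_l \varphi, \partial_k \psi)_{L_2(\Omega)} = (\partial_k \varphi, \partial_l \psi)_{L_2(\Omega)}$, so $L(\varphi \otimes f, \psi \otimes g) = R(\varphi \otimes f, \psi \otimes g)$.

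By sesquilinearity, $L$ and $R$ agree on $E \times E$, where $E = \spann \{ \varphi \otimes f : \varphi \in H^1_0(\Omega), \, f \in L_2(Y) \}$. Since $E$ is dense in $H^1_0(\Omega,L_2(Y))$ by Lemma~\ref{lposvec213}\ref{lposvec213-1}, and both $L$ and $R$ are continuous, the identity $L = R$ extends to all of $H^1_0(\Omega,L_2(Y)) \times H^1_0(\Omega,L_2(Y))$, which is the claim. There is no real obstacle; the only point requiring a bit of care is the continuity bound, which rests on the elementary fact that pointwise application of $B$ defines a bounded operator on $L_2(\Omega,L_2(Y))$ of norm $\|B\|$ (cf.\ \cite{HNVW1} Proposition~1.1.28).
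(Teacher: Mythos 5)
Your proof is correct and follows the same route as the paper's: verify the identity on elementary tensors $\varphi \otimes f$ via the scalar commutation Lemma~\ref{lposvec203}, then extend by sesquilinearity, density (Lemma~\ref{lposvec213}\ref{lposvec213-1}), and continuity of the two forms. The only additional content you supply is the explicit continuity bound $\|B\|$ for pointwise application of $B$, which the paper leaves implicit.
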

\begin{proof}
If $u,v \in \{ \varphi \otimes f : 
    \varphi \in H^1_0(\Omega) \mbox{ and } f \in L_2(Y) \} $,
then the equality follows from Lemma~\ref{lposvec203}.
Now use continuity and the density of Lemma~\ref{lposvec213}\ref{lposvec213-1}.
\end{proof}

\section{Sesquilinear forms} \label{Sposvec3}

In this section we study sesquilinear forms in which the form domain 
is an $L_2(Y)$-valued Sobolev space and the coefficients are 
operators in $L_2(Y)$. 
We wish to infer information about the coefficients from the form.
In this section we neither need that the form is elliptic,
nor that the form domain is a Hilbert space.

Throughout this section let $\Omega \subset \Ri^d$, with $d \geq 2$, be open and 
let $(Y,\Sigma,\nu)$ be a $\sigma$-finite measure space such 
that $L_2(Y)$ is separable.
For all $k,l \in \{ 1,\ldots,d \} $ let $C_{kl} \colon \Omega \to \cl(L_2(Y))$
be a function such that for all $f,g \in L_2(Y)$
the map $x \mapsto (C_{kl}(x) f,g)_{L_2(Y)}$ is 
a bounded measurable function from $\Omega$ into $\Ci$.
Note that these assumptions imply that there exists an $M \geq 0$ 
such that $\|C_{kl}(x)\|_{\cl(L_2(Y))} \leq M$
for all $k,l \in \{ 1,\ldots,d \} $ and $x \in \Omega$ by the 
uniform boundedness principle.
In fact, for every fixed $f \in L_2(Y)$ the set 
$ \{ C_{kl}(x) f : k,l \in \{ 1,\ldots,d \} \mbox{ and } x \in \Omega \} $
is weakly bounded and therefore bounded in $L_2(Y)$.
Let $V$ be a subspace of $H^1(\Omega,L_2(Y))$ such that 
$H^1_0(\Omega,L_2(Y)) \subset V$.
Define $\gota \colon V \times V \to \Ci$ by
\[
\gota(u,v)
= \sum_{k,l=1}^d \int_\Omega 
    (C_{kl}(x) \, (\partial_l u)(x), (\partial_k v)(x))_{L_2(Y)} \, dx
.  \]
Our main interest lies in the cases $V = H^1_0(\Omega,L_2(Y))$
and $V = H^1(\Omega,L_2(Y))$, but several of the intermediate results 
are valid for a general form domain.

In order to obtain information about the $C_{kl}$ from the form $\gota$
we use the notion of Lebesgue points.
We need two lemmas.

\begin{lemma} \label{lposvec205}
Let $\varphi \colon \Omega \to \Ri$ be bounded measurable, $r > 0$ and 
$\eta \in L_\infty(\Ri^d)$ with $\supp \eta \in B(0,r)$.
Let $x_0 \in \Omega$ be a Lebesgue point for $\varphi$, that is 
$\lim_{\delta \downarrow 0} \frac{1}{|B(x_0,\delta)|} \int_{B(x_0,\delta)} |\varphi(x) - \varphi(x_0)| = 0$.
Then 
\[
\lim_{\delta \downarrow 0} \int_{B(0,r)} \eta(z) \, \varphi(x_0 + \delta z) \, dz 
= \int_{\Ri^d} \eta(z) \, \varphi(x_0) \, dz 
.  \]
\end{lemma}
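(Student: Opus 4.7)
The result is a routine consequence of the definition of a Lebesgue point once one performs the right rescaling. First I would reduce to showing that
\[
I(\delta) := \int_{B(0,r)} \eta(z) \, \bigl(\varphi(x_0 + \delta z) - \varphi(x_0)\bigr) \, dz \longrightarrow 0
\]
as $\delta \downarrow 0$, using the linearity of the integral and the fact that $\int_{B(0,r)} \eta(z) \, \varphi(x_0) \, dz = \varphi(x_0) \int_{\Ri^d} \eta$ (since $\supp \eta \subset B(0,r)$). Since $\Omega$ is open, $B(x_0,\delta r) \subset \Omega$ for all sufficiently small $\delta > 0$, so $\varphi(x_0 + \delta z)$ is well-defined for $z \in B(0,r)$ once $\delta$ is small enough; the bounded measurability of $\varphi$ then guarantees integrability.

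Next I would bound $|I(\delta)| \leq \|\eta\|_\infty \int_{B(0,r)} |\varphi(x_0 + \delta z) - \varphi(x_0)| \, dz$ and apply the change of variables $x = x_0 + \delta z$, which gives
\[
\int_{B(0,r)} |\varphi(x_0 + \delta z) - \varphi(x_0)| \, dz
= \delta^{-d} \int_{B(x_0,\delta r)} |\varphi(x) - \varphi(x_0)| \, dx
= c_d \, r^d \cdot \frac{1}{|B(x_0,\delta r)|} \int_{B(x_0,\delta r)} |\varphi(x) - \varphi(x_0)| \, dx,
\]
where $c_d = |B(0,1)|$. The last average tends to $0$ as $\delta \downarrow 0$ by the Lebesgue point hypothesis (applied with the auxiliary radius $\delta r$ in place of $\delta$). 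Combining these estimates gives $I(\delta) \to 0$ and hence the desired limit.

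There is no real obstacle here; the one thing to watch out for is ensuring that $x_0 + \delta z$ stays in $\Omega$ (handled by choosing $\delta$ small) and that the factor $\delta^{-d}$ from the change of variables matches $|B(x_0,\delta r)|^{-1}$ up to the harmless constant $c_d r^d$, so that the Lebesgue point condition applies cleanly.
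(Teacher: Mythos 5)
Your proof is correct and follows essentially the same argument as the paper: reduce to bounding the difference, pull out $\|\eta\|_\infty$, change variables $x = x_0 + \delta z$, and identify the result with a constant multiple of the Lebesgue-point average over $B(x_0,\delta r)$. The paper writes the $\delta^{-d}$ factor as $|B(0,r)|/|B(x_0,r\delta)|$, but this is just a cosmetic difference from your $c_d\,r^d/|B(x_0,\delta r)|$.
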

\begin{proof}
There exists a $\delta_0 > 0$ such that $B(x_0,r \delta_0) \subset \Omega$.
Let $\delta \in (0,\delta_0]$.
Then 
\begin{eqnarray*}
\Big| \int_{B(0,r)} \eta(z) \, \Big( \varphi(x_0 + \delta z) - \varphi(x_0) \Big) \, dz \Big|
& \leq & \|\eta\|_\infty \, \int_{B(0,r)} | \varphi(x_0 + \delta z) - \varphi(x_0) | \, dz  \\
& = & \|\eta\|_\infty \, \frac{|B(0,r)|}{|B(x_0, r \delta)|} \, \int_{B(x_0, r \delta)} | \varphi(x) - \varphi(x_0) | \, dx 
.
\end{eqnarray*}
Since $x_0$ is a Lebesgue point for $\varphi$, the lemma follows.
\end{proof}

\begin{lemma} \label{lposvec206}
There exists a null set $N \subset \Omega$ such that for all 
$x_0 \in \Omega \setminus N$, $k,l \in \{ 1,\ldots,d \} $ and $f,g \in L_2(Y)$
the point $x_0$ is a Lebesgue point of the function $x \mapsto (C_{kl}(x) f, g)_{L_2(Y)}$.
\end{lemma}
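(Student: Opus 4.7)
The plan is to reduce to a countable family of functions using the separability of $L_2(Y)$, and then upgrade back to all pairs $(f,g) \in L_2(Y) \times L_2(Y)$ by exploiting the uniform operator bound $\|C_{kl}(x)\|_{\cl(L_2(Y))} \leq M$ already recorded at the start of the section. The classical Lebesgue differentiation theorem gives a null set for each individual bounded measurable scalar function, but that is not enough directly, since the family of scalar functions $\psi_{k,l,f,g} \colon x \mapsto (C_{kl}(x)f,g)_{L_2(Y)}$ indexed by $f,g \in L_2(Y)$ is uncountable.

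First I would fix a countable dense subset $D \subset L_2(Y)$. For every $k,l \in \{1,\ldots,d\}$ and every $f,g \in D$, the function $\psi_{k,l,f,g}$ is bounded and measurable, so the Lebesgue differentiation theorem provides a null set $N_{k,l,f,g} \subset \Omega$ outside which $x_0$ is a Lebesgue point of $\psi_{k,l,f,g}$. I would then define
$$N = \bigcup_{k,l=1}^d \bigcup_{f,g \in D} N_{k,l,f,g},$$
which is a countable union of null sets, hence null.

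The second step is to show that for any $x_0 \in \Omega \setminus N$, the point $x_0$ is also a Lebesgue point of $\psi_{k,l,f,g}$ for all $f,g \in L_2(Y)$. Pick sequences $(f_n), (g_n)$ in $D$ with $f_n \to f$ and $g_n \to g$. For every $x \in \Omega$ the identity
$$\psi_{k,l,f,g}(x) - \psi_{k,l,f_n,g_n}(x) = (C_{kl}(x)(f-f_n),g)_{L_2(Y)} + (C_{kl}(x)f_n, g-g_n)_{L_2(Y)}$$
combined with the uniform bound yields $|\psi_{k,l,f,g}(x) - \psi_{k,l,f_n,g_n}(x)| \leq \varepsilon_n$, where $\varepsilon_n := M(\|f-f_n\|\,\|g\| + \|f_n\|\,\|g-g_n\|) \to 0$. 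A triangle inequality then gives, for every $\delta > 0$,
$$\frac{1}{|B(x_0,\delta)|} \int_{B(x_0,\delta)} |\psi_{k,l,f,g}(x) - \psi_{k,l,f,g}(x_0)|\,dx \leq \frac{1}{|B(x_0,\delta)|} \int_{B(x_0,\delta)} |\psi_{k,l,f_n,g_n}(x) - \psi_{k,l,f_n,g_n}(x_0)|\,dx + 2\varepsilon_n.$$
Since $x_0 \notin N_{k,l,f_n,g_n}$, the first term on the right tends to $0$ as $\delta \downarrow 0$; sending $n \to \infty$ then absorbs the $2\varepsilon_n$ error, proving that $x_0$ is a Lebesgue point of $\psi_{k,l,f,g}$.

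The only genuine obstacle is the uncountability of the family of test pairs $(f,g)$; the separability of $L_2(Y)$ together with the uniform bound $M$ on the operator norms is precisely what permits the countable-to-arbitrary passage via the approximation above. No step beyond the standard Lebesgue differentiation theorem is required.
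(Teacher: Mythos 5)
Your proof is correct and follows essentially the same route as the paper: fix a countable dense subset of $L_2(Y)$, take the union of the Lebesgue-point null sets for that countable family, and extend to arbitrary $f,g$ via the uniform bound on $\|C_{kl}(x)\|_{\cl(L_2(Y))}$. The paper states the final approximation step only in one sentence; you have written out the triangle-inequality argument in full, which is exactly what the paper leaves to the reader.
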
 
\begin{proof}
Since $L_2(Y)$ is separable, there exists a countable subset $H_0 \subset L_2(Y)$
such that $H_0$ is dense in $L_2(Y)$.
Since almost every point of $\Omega$ is a Lebesgue point for a given $L_{1,\loc}(\Omega)$
function, there exists a null set $N \subset \Omega$ such that for all 
$x_0 \in \Omega \setminus N$, $k,l \in \{ 1,\ldots,d \} $ and $f,g \in H_0$
the point $x_0$ is a Lebesgue point of the bounded 
function $x \mapsto (C_{kl}(x) f, g)_{L_2(Y)}$.
Because the $C_{kl}(x)$ are uniformly bounded operators, the same set $N$ works 
for all $f,g \in L_2(Y)$.
\end{proof}

In general only the coefficients $C_{kl} + C_{lk}$ are determined uniquely by the 
form $\gota$.

\begin{prop} \label{pposvec200.8}
Suppose 
\[
\gota(u,v)
= 0
\]
for all $u,v \in V$.
Then 
\[
C_{kl}(x) + C_{lk}(x) = 0
\]
for all $k,l \in \{ 1,\ldots,d \} $ and almost all $x \in \Omega$.
\end{prop}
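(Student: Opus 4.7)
My strategy is to probe the hypothesis $\gota = 0$ with elementary tensor ``bumps'' concentrated at a Lebesgue point $x_0 \in \Omega$, scale their support to a point, and use the bespoke test functions of Lemma~\ref{lposvec204} to separate the index pairs. Since $H^1_0(\Omega, L_2(Y)) \subset V$, for any real-valued $\varphi, \psi \in H^1_0(\Omega)$ and any $f, g \in L_2(Y)$ the tensors $\varphi \otimes f, \psi \otimes g$ lie in $V$, and the definition of $\gota$ collapses to
\[
\gota(\varphi \otimes f, \psi \otimes g) = \sum_{k,l=1}^d \int_\Omega (\partial_l \varphi)(x) \, (\partial_k \psi)(x) \, (C_{kl}(x) f, g)_{L_2(Y)} \, dx.
\]

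Let $N \subset \Omega$ be the null set furnished by Lemma~\ref{lposvec206}, outside of which every point is simultaneously a Lebesgue point of each scalar map $x \mapsto (C_{kl}(x) f, g)_{L_2(Y)}$ for all $k,l$ and all $f,g \in L_2(Y)$ (this is where separability of $L_2(Y)$ and the uniform operator bound on $\|C_{kl}(x)\|$ enter). Fix $x_0 \in \Omega \setminus N$, a pair $(\tilde k, \tilde l)$, and $\tau \in \Ri$, and take the nonnegative functions $\varphi, \psi \in W^{1,\infty}_0((-1,1)^d)$ supplied by Lemma~\ref{lposvec204}. For small $\delta > 0$ the rescaled functions $\varphi_\delta(x) := \varphi((x-x_0)/\delta)$ and $\psi_\delta(x) := \psi((x-x_0)/\delta)$ are compactly supported in $\Omega$, hence lie in $H^1_0(\Omega)$. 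Substituting $\varphi_\delta \otimes f, \psi_\delta \otimes g$ into $\gota = 0$, changing variables $z = (x - x_0)/\delta$ and extracting the common factor $\delta^{d-2}$ gives
\[
\sum_{k,l=1}^d \int_{\Ri^d} (\partial_l \varphi)(z) \, (\partial_k \psi)(z) \, (C_{kl}(x_0 + \delta z) f, g)_{L_2(Y)} \, dz = 0.
\]
Letting $\delta \downarrow 0$ and applying Lemma~\ref{lposvec205} to each summand (with $\eta := (\partial_l \varphi)(\partial_k \psi) \in L_\infty(\Ri^d)$, compactly supported in $B(0, \sqrt d)$) yields
\[
\sum_{k,l=1}^d (C_{kl}(x_0) f, g)_{L_2(Y)} \int_{\Ri^d} (\partial_l \varphi)(z) \, (\partial_k \psi)(z) \, dz = 0.
\]

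By Lemma~\ref{lposvec204} only the pairs $(k,l) \in \{(\tilde k, \tilde l), (\tilde l, \tilde k)\}$ give a nonvanishing integral, and both integrals equal $\tau$. Choosing $\tau \neq 0$ therefore forces $\bigl((C_{\tilde k \tilde l}(x_0) + C_{\tilde l \tilde k}(x_0)) f, g\bigr)_{L_2(Y)} = 0$ when $\tilde k \neq \tilde l$, and $(C_{\tilde k \tilde k}(x_0) f, g)_{L_2(Y)} = 0$ when $\tilde k = \tilde l$ (which of course gives $2 C_{\tilde k \tilde k}(x_0) = 0$ as well). Since $f, g \in L_2(Y)$ and $(\tilde k, \tilde l)$ are arbitrary, $C_{kl}(x_0) + C_{lk}(x_0) = 0$ for every $x_0 \in \Omega \setminus N$, which is the assertion. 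I do not anticipate a serious obstacle: the construction in Lemmas~\ref{lposvec204}--\ref{lposvec206} has been arranged exactly so that this step is a controlled blow-up. The single delicacy is the uniformity of the null set $N$ across all choices of $(f,g)$, which is precisely what Lemma~\ref{lposvec206} records.
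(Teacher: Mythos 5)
Your proposal is correct and takes essentially the same approach as the paper's proof: rescale elementary tensor bumps at a common Lebesgue point (supplied by Lemma~\ref{lposvec206}), use the test functions of Lemma~\ref{lposvec204} to isolate the $(\tilde k,\tilde l)$ and $(\tilde l,\tilde k)$ contributions, extract the scaling factor $\delta^{d-2}$, and pass to the limit via Lemma~\ref{lposvec205}. The only cosmetic difference is that the paper fixes $\tau=1$ (respectively $\tau=2$) rather than keeping a generic $\tau\neq 0$.
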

\begin{proof}
Let the null set $N \subset \Omega$ be as in Lemma~\ref{lposvec206}.
Let $\tilde k, \tilde l \in \{ 1,\ldots,d \} $ and choose 
$\varphi,\psi \in W^{1,\infty}(\Ri^d)$ as in Lemma~\ref{lposvec204}
with $\tau = 1$ if $\tilde k \neq \tilde l$, and 
with $\tau = 2$ if $\tilde k = \tilde l$.
Then $\supp (\partial_l \varphi) \, \partial_k \psi \subset [-1,1]^d$
for all $k,l \in \{ 1,\ldots,d \} $.

Let $x_0 \in \Omega \setminus N$.
For all $\delta \in (0,\infty)$ 
define $\varphi_\delta,\psi_\delta \in W^{1,\infty}_c(\Ri^d)$
by $\varphi_\delta(x) = \varphi(\delta^{-1} \, (x-x_0))$ and
$\psi_\delta(x) = \psi(\delta^{-1} \, (x-x_0))$.
Let $f,g \in L_2(Y)$.
If $\delta > 0$ is small enough, then 
$\varphi_\delta \otimes f, \psi_\delta \otimes g \in H^1_0(\Omega,L_2(Y))$ and 
\begin{eqnarray*}
\lefteqn{
0 
= \gota(\varphi_\delta \otimes f, \psi_\delta \otimes g)
} \hspace*{5mm}  \\*
& = & \sum_{k,l=1}^d \int_\Omega (\partial_l \varphi_\delta)(x) \, (\partial_k \psi_\delta)(x) \, 
     (C_{kl}(x) f, g)_{L_2(Y)} \, dx  \\
& = & \delta^{d - 2} \sum_{k,l=1}^d \int_{B(0,d)} ((\partial_l \varphi) \, \partial_k \psi)(z) \, 
     (C_{kl}(x_0 + \delta z) f, g)_{L_2(Y)} \, dz  \\
& = & \delta^{d - 2} \Bigg( 
\sum_{k,l=1}^d \int_{\Ri^d} ((\partial_l \varphi) \, \partial_k \psi)(z) \, 
     (C_{kl}(x_0) f, g)_{L_2(Y)} \, dz  
\\*
& & \hspace*{11mm} {}
+ \sum_{k,l=1}^d \int_{B(0,d)} ((\partial_l \varphi) \, \partial_k \psi)(z) \, 
     \Big( (C_{kl}(x_0 + \delta z) f, g)_{L_2(Y)} - (C_{kl}(x_0) f, g)_{L_2(Y)} \Big) \, dz  
                    \Bigg)
.
\end{eqnarray*}
Now
\[
\lim_{\delta \downarrow 0}
 \sum_{k,l=1}^d \int_{B(0,d)} ((\partial_l \varphi) \, \partial_k \psi)(z) \, 
     \Big( (C_{kl}(x_0 + \delta z) f, g)_{L_2(Y)} - (C_{kl}(x_0) f, g)_{L_2(Y)} \Big) \, dz  
= 0
\]
by Lemmas~\ref{lposvec206} and \ref{lposvec205}.
Hence 
\begin{eqnarray*}
0
& = & \sum_{k,l=1}^d \int_{\Ri^d} ((\partial_l \varphi) \, \partial_k \psi)(z) \, 
     (C_{kl}(x_0) f, g)_{L_2(Y)} \, dz    \\
& = & (C_{\tilde k \tilde l}(x_0) f, g)_{L_2(Y)}
   + (C_{\tilde l \tilde k}(x_0) f, g)_{L_2(Y)}
\end{eqnarray*}
and the proposition follows.
\end{proof}

If $d \geq 2$ and $V = H^1_0(\Omega,L_2(Y))$ then the form $\gota$
never determines the coefficients.
In fact, By Lemma~\ref{lposvec203} one may always replace 
$C_{12}(x)$ by $C_{12}(x) + I_{L_2(Y)}$ and 
$C_{21}(x)$ by $C_{21}(x) - I_{L_2(Y)}$ without changing the form.

If $d = 2$, $\Omega$ is bounded and $V = H^1(\Omega,L_2(Y))$, then 
also the off-diagonal elements are determined by the form.

\begin{prop} \label{pposvec200.9}
Suppose $d = 2$, $\Omega$ is bounded and $V = H^1(\Omega,L_2(Y))$.
Further suppose 
\[
\gota(u,v)
= 0
\]
for all $u,v \in V$.
Then 
\[
C_{kl}(x) = 0
\]
for all $k,l \in \{ 1,2 \} $ and almost all $x \in \Omega$.
\end{prop}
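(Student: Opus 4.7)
My plan is to first apply Proposition~\ref{pposvec200.8}, which gives $C_{kl}(x) + C_{lk}(x) = 0$ for almost every $x \in \Omega$ and all $k,l \in \{1,2\}$. This forces $C_{11}(x) = C_{22}(x) = 0$ a.e.\ and $C_{21}(x) = -C_{12}(x)$ a.e., so it suffices to show $C_{12}(x) = 0$ for almost every $x \in \Omega$. To extract scalar-valued information, I fix $f, g \in L_2(Y)$ and set $h(x) := (C_{12}(x) f, g)_{L_2(Y)}$, which is a bounded measurable function on $\Omega$.

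For arbitrary real-valued $\varphi, \psi \in H^1(\Omega)$, the tensor products $\varphi \otimes f$ and $\psi \otimes g$ lie in $V = H^1(\Omega, L_2(Y))$, and substituting them into the vanishing form, together with the reductions above, collapses the hypothesis to
\[
\int_\Omega h(x) \, \bigl( (\partial_2 \varphi)(\partial_1 \psi) - (\partial_1 \varphi)(\partial_2 \psi) \bigr) \, dx = 0
\]
for all such $\varphi, \psi$. The essential new feature, not available in Proposition~\ref{pposvec200.8}, is that since $\Omega$ is bounded the coordinate functions $x \mapsto x_1$ and $x \mapsto x_2$ belong to $H^1(\Omega)$ (though not to $H^1_0(\Omega)$), so they escape the symmetry forced by Lemma~\ref{lposvec203}. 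Choosing $\varphi(x) = x_2$ together with $\psi \in C_c^\infty(\Omega)$ reduces the identity to $\int_\Omega h \, \partial_1 \psi = 0$, i.e.\ $\partial_1 h = 0$ distributionally; the symmetric choice $\varphi(x) = x_1$ yields $\partial_2 h = 0$. Hence $h$ is locally constant on $\Omega$, so equals some constant $c_\alpha$ on each connected component $\Omega_\alpha$.

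To kill the constants, I observe that each indicator $\chi_\alpha$ of $\Omega_\alpha$ lies in $H^1(\Omega)$ with zero weak gradient (because $\Omega_\alpha$ is clopen in $\Omega$), and consequently $x_1 \chi_\alpha$ and $x_2 \chi_\alpha$ are also in $H^1(\Omega)$. Plugging $\varphi = x_1 \chi_\alpha$ and $\psi = x_2 \chi_\alpha$ into the displayed identity gives $-c_\alpha |\Omega_\alpha| = 0$, and since $|\Omega_\alpha|$ is strictly positive (nonempty open subset) and finite ($\Omega$ is bounded), we conclude $c_\alpha = 0$. Thus $h = 0$ a.e.\ on $\Omega$.

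Finally, a standard countable dense subset argument on $L_2(Y)$ (which is separable) promotes the scalar vanishing to $C_{12}(x) = 0$ for almost every $x$, and hence also $C_{21}(x) = 0$. The main obstacle---namely, recognising that the enlarged form domain $H^1(\Omega, L_2(Y))$ admits asymmetric test functions such as $x_1$ and $x_2$ which breach the commutation in Lemma~\ref{lposvec203}---is conceptual rather than technical; the remaining manipulations are routine.
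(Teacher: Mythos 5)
Your proposal is correct and follows essentially the same route as the paper: reduce via Proposition~\ref{pposvec200.8} to the single off-diagonal coefficient, exploit that the coordinate functions $x_1$ and $x_2$ lie in $H^1(\Omega)$ for bounded $\Omega$ to show the scalar function $x \mapsto (C_{12}(x)f,g)_{L_2(Y)}$ has vanishing weak gradient, invoke local constancy, and then annihilate the constant on each component by testing with a coordinate function multiplied by the component's indicator. The only cosmetic difference is that the paper first derives equation~(\ref{epposvec200.9;1}) by fixing $\psi(x)=x_1$ and then kills the constant by feeding $\varphi = x_2\chi_{\Omega_0}$ into that equation, while you feed $\varphi = x_1\chi_\alpha$ and $\psi = x_2\chi_\alpha$ simultaneously into the original identity; these are equivalent.
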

\begin{proof}
It follows from Proposition~\ref{pposvec200.8} that 
$C_{11}(x) = C_{22}(x) = C_{12}(x) + C_{21}(x) = 0$ for almost every
$x \in \Omega$.
Let $f,g \in L_2(Y)$.
Let $\varphi,\psi \in H^1(\Omega)$.
Then 
\[
0
= \gota(\varphi \otimes f, \psi \otimes g)
= \int_\Omega ( C_{12}(x) f, g)_{L_2(Y)} 
          \, \Big( (\partial_2 \varphi)(x) \, \overline{(\partial_1 \psi)(x)}
   - (\partial_1 \varphi)(x) \, \overline{(\partial_2 \psi)(x)} \Big) \, dx
\]
for all $\varphi,\psi \in H^1(\Omega)$.
Let $\varphi \in H^1(\Omega)$.
Choose $\psi(x) = x_1$.
Then 
\begin{equation}
\int_\Omega ( C_{12}(x) f, g)_{L_2(Y)} \, (\partial_2 \varphi)(x) \, dx = 0 .
\label{epposvec200.9;1}
\end{equation}
Similarly $\int_\Omega ( C_{12}(x) f, g)_{L_2(Y)} \, (\partial_1 \varphi)(x) \, dx = 0$.
So $x \mapsto (C_{12}(x) f, g)_{L_2(Y)}$ is an element of 
$W^{1,\infty}(\Omega)$ with vanishing gradient.
Hence $x \mapsto (C_{12}(x) f, g)_{L_2(Y)}$ is constant a.e.\ on connected 
components of $\Omega$ by \cite{Zie2} Corollary~2.1.9.
Let $\Omega_0$ be a connected component of $\Omega$.
Define $\varphi \in H^1(\Omega)$ by $\varphi(x) = x_2$
if $x \in \Omega_0$, and $\varphi(x) = 0$ if $x \in \Omega \setminus \Omega_0$.
Then (\ref{epposvec200.9;1}) gives 
$\int_{\Omega_0} ( C_{12}(x) f, g)_{L_2(Y)} \, dx = 0$ and 
$x \mapsto (C_{12}(x) f, g)_{L_2(Y)} = 0$ a.e.\ on $\Omega_0$.
So $C_{12}(x) = 0$ for a.e.\ $x \in \Omega$.
\end{proof}

One might hope that Proposition~\ref{pposvec200.9} is also valid 
in higher dimensions, but that is not the case in general,
even if $L_2(Y) = \Ci$, by the next example.

\begin{exam} \label{xposvec200.95}
Let $\Omega = (-1,1)^3$.
For all $k,l \in \{ 1,2,3 \} $ define $c_{kl} \colon \Omega \to \Ri$ by 
\begin{eqnarray*}
c_{12}(x) & = & - (x_1^2 - 1) \, (x_2^2 - 1) \, x_3  \\
c_{23}(x) & = & - (x_2^2 - 1) \, (x_3^2 - 1) \, x_1  \\
c_{13}(x) & = & + (x_1^2 - 1) \, (x_3^2 - 1) \, x_2  
\end{eqnarray*} 
together with $c_{11} = c_{22} = c_{33} = 0$ and 
$c_{21} = - c_{12}$, $c_{32} = - c_{23}$ and $c_{31} = - c_{13}$.
Note that 
\begin{equation}
\partial_2 c_{12} = - \partial_3 c_{13}
, \quad
\partial_1 c_{12} = \partial_3 c_{23}
\mbox{ and} \quad
\partial_1 c_{13} = - \partial_2 c_{23}
.  
\label{exposvec200.95;1}
\end{equation}
Moreover $c_{kl}(x) = 0$ if $x_k \in \{ -1,1 \} $ or $x_l \in \{ -1,1 \} $
for all $k,l \in \{ 1,2,3 \} $.
Define $\gota \colon H^1(\Omega) \times H^1(\Omega) \to \Ci$ by 
\[
\gota(\varphi,\psi) 
= \sum_{k,l=1}^3 \int_\Omega c_{kl}(x) \, 
      (\partial_l \varphi)(x) \, \overline{(\partial_k \psi)(x)} \, dx
.  \]
Let $\varphi,\psi \in C^\infty(\overline \Omega)$.
Then integration by parts and using that the boundary terms vanish gives
\begin{eqnarray*}
\gota(\varphi,\psi)
& = & \sum_{k < l} \int_\Omega c_{kl}(x) \Big( 
        (\partial_l \varphi)(x) \, \overline{(\partial_k \psi)(x)}
       - (\partial_k \varphi)(x) \, \overline{(\partial_l \psi)(x)}
                               \Big) \, dx \\
& = & - \sum_{k < l} \int_\Omega \Big( 
        (\partial_k c_{kl}) \, \partial_l \varphi 
          + c_{kl} \, \partial_k \partial_l \varphi 
        - (\partial_l c_{kl}) \, \partial_k \varphi
          - c_{kl} \, \partial_l \partial_k \varphi \, 
                               \Big) \, \overline \psi  \\
& = & \int_\Omega
      \Big( - (\partial_1 c_{12}) \, \partial_2 \varphi
            - (\partial_1 c_{13}) \, \partial_3 \varphi
            - (\partial_2 c_{23}) \, \partial_3 \varphi   \\*
& & \hspace*{20mm} {}
            + (\partial_2 c_{12}) \, \partial_1 \varphi
            + (\partial_3 c_{13}) \, \partial_1 \varphi
            - (\partial_3 c_{23}) \, \partial_2 \varphi
      \Big) \, \overline \psi   \\
& = & 0,
\end{eqnarray*}
where we use Lemma~\ref{lposvec203} for the penultimate and (\ref{exposvec200.95;1})
for the last equality.
Then by density $\gota(\varphi,\psi) = 0$ for all $\varphi,\psi \in H^1(\Omega)$.
\end{exam}

\section{Weak assumption of measurability} \label{Sposvec4}

Some steps in the proof of the implication `\ref{tposvec101-1}$\Rightarrow$\ref{tposvec101-2}'
of Theorem~\ref{tposvec101} can be generalised without much additional effort.
Hence in this section we adopt the following slightly 
more general assumptions and notation.

Throughout this section let $\Omega \subset \Ri^d$, with $d \geq 2$, be open and 
$(Y,\Sigma,\nu)$ be a $\sigma$-finite measure space such 
that $L_2(Y)$ is separable.
For all $k,l \in \{ 1,\ldots,d \} $ let $C_{kl} \colon \Omega \to \cl(L_2(Y))$
be a function such that for all $f,g \in L_2(Y)$
the map $x \mapsto (C_{kl}(x) f,g)_{L_2(Y)}$ is 
a bounded measurable function from $\Omega$ into $\Ci$.
Suppose there is a $\mu > 0$ such that 
\begin{equation}
\RRe \sum_{k,l=1}^d (C_{kl}(x) f_l, f_k)_{L_2(Y)}
\geq \mu \sum_{k=1}^d \|f_k\|_{L_2(Y)}^2
\label{eSposvec4;1}
\end{equation}
for all $f \in L_2(Y)^d$ and a.e.\ $x \in \Omega$.
Let $V$ be a closed subspace of $H^1(\Omega,L_2(Y))$ such that 
$C_c^\infty(\Omega,L_2(Y)) \subset V$.
Define $\gota \colon V \times V \to \Ci$ by
\[
\gota(u,v)
= \sum_{k,l=1}^d \int_\Omega 
    (C_{kl}(x) \, (\partial_l u)(x), (\partial_k v)(x))_{L_2(Y)} \, dx
.  \]
Then $\gota$ is a continuous and elliptic sesquilinear form.
Let $A$ be the operator in $L_2(\Omega,L_2(Y))$ 
associated with $\gota$ and let $S$ be the semigroup on $L_2(\Omega,L_2(Y))$ 
generated by $-A$, see Proposition~1.51 in \cite{Ouh5}.

Note that we currently do not require that $S$ is positive.
Again our main interest lies in the cases $V = H^1_0(\Omega,L_2(Y))$
and $V = H^1(\Omega,L_2(Y))$.

We need some more notation.
In Example~\ref{xposvec103} there is a positive semigroup for which 
the operators $C_{kl}$ in the sesquilinear form $\gota$
do not leave the space $L_2(Y,\Ri)$ invariant. 
We aim to replace the operators $C_{kl}$ with new operators such that 
the new ones leave the space $L_2(Y,\Ri)$ invariant.
Let $Q \colon L_2(Y) \to L_2(Y)$ be a bounded (linear) operator.
Define $\widecheck{Q} \colon L_2(Y) \to L_2(Y)$ by
\[
\widecheck{Q} f
= \RRe (Q \RRe f) + i \RRe (Q \IIm f)
.  \]
It is straightforward to show that $\widecheck{Q}$ is again a linear map
which is bounded. 
Moreover, $\widecheck{Q}$ leaves $L_2(Y,\Ri)$ invariant.

If the semigroup $S$ leaves $L_2(Y,\Ri)$ invariant, then 
one can replace the coefficients $C_{kl}$ by $x \mapsto (C_{kl}(x)) \, \widecheck{\;}$.

\begin{prop} \label{pposvec200.3}
Suppose that the semigroup $S$ leaves $L_2(\Omega,L_2(Y,\Ri))$ invariant.
Then 
\[
\gota(u,v)
= \sum_{k,l=1}^d \int_\Omega 
    ((C_{kl}(x)) \, \widecheck{\;} \, (\partial_l u)(x), (\partial_k v)(x))_{L_2(Y)} \, dx
\]
for all $u,v \in V$.
\end{prop}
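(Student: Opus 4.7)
My plan is to recognise the operation $Q \mapsto \widecheck{Q}$ as a symmetrisation under complex conjugation and then to extract the required form identity from Ouhabaz's invariance criterion applied to the closed real-linear subspace $\cm := L_2(\Omega, L_2(Y,\Ri))$.

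Let $J$ denote pointwise complex conjugation, acting on $L_2(Y)$ by $J f = \bar f$ and extended componentwise to $L_2(\Omega, L_2(Y))$. A short direct computation (decomposing $f$ into real and imaginary parts) gives
\[
\widecheck{Q} \;=\; \tfrac{1}{2}(Q + J Q J) \qquad (Q \in \cl(L_2(Y))).
\]
Since $J$ is a conjugate-linear isometry that commutes with distributional differentiation, for every $u,v \in V$ I compute
\begin{eqnarray*}
\sum_{k,l=1}^d \int_\Omega ( J C_{kl}(x) J (\partial_l u)(x), (\partial_k v)(x))_{L_2(Y)} \, dx
& = & \sum_{k,l=1}^d \int_\Omega \overline{( C_{kl}(x) (\partial_l \bar u)(x), (\partial_k \bar v)(x))_{L_2(Y)}} \, dx \\
& = & \overline{\gota(\bar u, \bar v)}
,
\end{eqnarray*}
so that
\[
\gota(u,v) - \sum_{k,l=1}^d \int_\Omega ((C_{kl}(x))\,\widecheck{\;}\,(\partial_l u)(x), (\partial_k v)(x))_{L_2(Y)} \, dx
\;=\; \tfrac{1}{2}\bigl(\gota(u,v) - \overline{\gota(\bar u, \bar v)}\bigr).
\]
Consequently the proposition reduces to the reality property $\gota(\bar u, \bar v) = \overline{\gota(u,v)}$ for all $u, v \in V$.

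To establish this identity I will apply Ouhabaz's invariance theorem for closed convex sets to $\cm$; the orthogonal projection onto $\cm$ is $P u = \RRe u$. Invariance of $\cm$ under $S$ yields, firstly, $\RRe u \in V$ for every $u \in V$ (so that $V$ is $J$-invariant) and, secondly, $\RRe \gota(\RRe u, i \IIm u) \geq 0$. Applying the criterion also to $i u \in V$ gives the complementary inequality $\RRe \gota(-\IIm u, i \RRe u) \geq 0$. Combining the two (after swapping the roles of $\RRe u$ and $\IIm u$) forces $\gota(u_1, u_2) \in \Ri$ for every real-valued $u_1, u_2 \in V$; decomposing arbitrary $u, v \in V$ into real and imaginary parts then yields $\gota(\bar u, \bar v) = \overline{\gota(u,v)}$ by the sesquilinearity of $\gota$.

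The only mildly delicate step is combining the two Ouhabaz inequalities to obtain the reality of $\gota$ on real-valued pairs; everything else is algebraic manipulation with complex conjugation. The ellipticity and weak measurability hypotheses of this section are not invoked, as the proposition is a structural rewriting of the form reflecting the hypothesised symmetry of $S$.
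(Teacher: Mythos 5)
Your proof is correct, and it reaches the same Ouhabaz-type input as the paper but packages the algebra differently. The paper (citing Ouhabaz's Proposition 2.5 for closed subspaces) obtains directly that $\RRe u \in V_\Ri$ for all $u \in V$ and that $\gota$ is real-valued on $V_\Ri \times V_\Ri$ where $V_\Ri = V \cap L_2(\Omega,L_2(Y,\Ri))$; it then verifies $\gota = \gotb$ on $V_\Ri \times V_\Ri$ by passing to real parts inside the integral, using that for a real-valued test function $g$ one has $\RRe (Cf, g) = (\RRe(Cf), g)$ and $\RRe(Cf) = \widecheck{C} f$ when $f$ is real, and finally extends by sesquilinearity. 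You instead identify the clean operator identity $\widecheck{Q} = \tfrac12(Q + JQJ)$ with $J$ the pointwise conjugation, observe that $J$ commutes with weak derivatives, and reduce everything to the single conjugation symmetry $\gota(\bar u, \bar v) = \overline{\gota(u,v)}$, which you derive from two applications of the closed-convex-set criterion (Theorem 2.2-style inequalities) to $u$ and $iu$. Both routes hinge on the same facts: $V$ is stable under $\RRe$, and $\gota$ is real on real-valued pairs. What your version buys is a transparent structural picture — $\widecheck{\cdot}$ is literally symmetrisation by $J$, and the claim is exactly reality of the form under conjugation; what it costs is a small extra reduction, since you use the inequality form of the invariance criterion and must combine two instances to force an equality, whereas the subspace version cited in the paper hands you the equality $\gota(Pu,(I-P)v)=0$ outright. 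One point to record explicitly if you write this up: $J$-invariance of $V$ does indeed follow from $\RRe u \in V$ (apply it to $u$ and $iu$ to get $\IIm u \in V$, hence $\bar u \in V$), and this is needed to make $\gota(\bar u, \bar v)$ meaningful.
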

\begin{proof}
Write $V_\Ri = V \cap L_2(\Omega,L_2(Y,\Ri))$.
Then by \cite{Ouh5} Proposition~2.5 one deduces that $\RRe u \in V_\Ri$
for all $u \in V$ and $\gota(u,v) \in \Ri$ for all $u,v \in V_\Ri$.
Define $\gotb \colon V \times V \to \Ci$ by 
\[
\gotb(u,v)
= \sum_{k,l=1}^d \int_\Omega 
    ((C_{kl}(x)) \, \widecheck{\;} \, (\partial_l u)(x), (\partial_k v)(x))_{L_2(Y)} \, dx
.  \]
If $u,v \in V_\Ri$, then 
\begin{eqnarray*}
\gota(u,v)
& = & \RRe \gota(u,v)
= \sum_{k,l=1}^d \int_\Omega 
    (\RRe (C_{kl}(x) \, (\partial_l u)(x)), (\partial_k v)(x))_{L_2(Y)} \, dx  \\[0pt]
& = & \sum_{k,l=1}^d \int_\Omega 
    ((C_{kl}(x)) \, \widecheck{\;} \, (\partial_l u)(x), (\partial_k v)(x))_{L_2(Y)} \, dx
= \gotb(u,v)
.
\end{eqnarray*}
So $\gota(u,v) = \gotb(u,v)$ for all $u,v \in V_\Ri$.
Then by linearity the proposition follows.
\end{proof}

\begin{cor} \label{cposvec200.61}
Suppose the semigroup $S$ leaves $L_2(\Omega,L_2(Y,\Ri))$ invariant.
Then 
\[
C_{kl}(x) + C_{lk}(x) = (C_{kl}(x) + C_{lk}(x)) \, \widecheck{\;}
\]
for all $k,l \in \{ 1,\ldots,d \} $ and almost all $x \in \Omega$.
\end{cor}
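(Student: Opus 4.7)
The plan is to exhibit two representations of the same sesquilinear form $\gota$ and then exploit the uniqueness-up-to-symmetry furnished by Proposition~\ref{pposvec200.8}. Concretely, Proposition~\ref{pposvec200.3} yields
\[
\gota(u,v) = \sum_{k,l=1}^d \int_\Omega ((C_{kl}(x))\,\widecheck{\;}\, (\partial_l u)(x), (\partial_k v)(x))_{L_2(Y)}\, dx
\]
for all $u,v \in V$, alongside the original representation with coefficients $C_{kl}(x)$. Subtracting and setting $D_{kl}(x) := C_{kl}(x) - (C_{kl}(x))\,\widecheck{\;}$ produces a sesquilinear form in the coefficients $D_{kl}$ that vanishes identically on $V \times V$. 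Since $V$ is closed and contains $C_c^\infty(\Omega,L_2(Y))$, it contains the closure $H^1_0(\Omega,L_2(Y))$, so the hypothesis on the form domain in Proposition~\ref{pposvec200.8} is met.

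Before invoking that proposition I would check that $x \mapsto (D_{kl}(x)f,g)_{L_2(Y)}$ is bounded and measurable for all $f,g \in L_2(Y)$. Only the $(C_{kl}(x))\,\widecheck{\;}$-part requires work. Decomposing $f = f_1 + if_2$ and $g = g_1 + ig_2$ into real-valued parts and using the identity $(\RRe h, g_j)_{L_2(Y)} = \RRe (h,g_j)_{L_2(Y)}$ when $g_j$ is real-valued, a direct expansion of the definition of $\widecheck{\;}$ expresses $((C_{kl}(x))\,\widecheck{\;}\, f, g)_{L_2(Y)}$ as a $\Ci$-linear combination of the four bounded measurable functions $x \mapsto \RRe (C_{kl}(x) f_i, g_j)_{L_2(Y)}$, $i,j \in \{1,2\}$. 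This gives the required weak measurability, and the uniform bound $\|(C_{kl}(x))\,\widecheck{\;}\,\| \leq 4\|C_{kl}(x)\|_{\cl(L_2(Y))}$ keeps everything bounded.

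Proposition~\ref{pposvec200.8}, applied to the form with coefficients $D_{kl}$, then delivers
\[
D_{kl}(x) + D_{lk}(x) = 0
\]
for all $k,l \in \{1,\ldots,d\}$ and almost every $x \in \Omega$. Since $Q \mapsto \widecheck{Q}$ is additive (an immediate consequence of the additivity of $\RRe$ on $L_2(Y)$), this rearranges to $C_{kl}(x) + C_{lk}(x) = (C_{kl}(x) + C_{lk}(x))\,\widecheck{\;}$ almost everywhere, which is the statement of the corollary. The only mildly delicate point is the weak measurability check for the checked coefficients; with that in hand the result reduces to assembling Propositions~\ref{pposvec200.3} and~\ref{pposvec200.8} with the linearity of $\widecheck{\;}$.
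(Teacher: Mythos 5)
Your proof is correct and takes essentially the same route as the paper, which simply cites Propositions~\ref{pposvec200.3} and~\ref{pposvec200.8}; you have merely spelled out the necessary verifications (weak measurability of the checked coefficients, additivity of $\widecheck{\;}$) that the paper leaves implicit.
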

\begin{proof}
This follows immediately from Propositions~\ref{pposvec200.3} 
and~\ref{pposvec200.8}.
\end{proof}

The operator coefficients $\Big( (C_{kl}(x)) \, \widecheck{\;} \Big)_{k,l \in \{ 1,\ldots,d \} }$
still satisfy the ellipticity condition~(\ref{eSposvec4;1}).

\begin{lemma} \label{lposvec200.62}
Let $f \in (L_2(Y))^d$.
Then 
\[
\RRe \sum_{k,l=1}^d ((C_{kl}(x)) \, \widecheck{\;} \, f_l, f_k)_{L_2(Y)}
\geq \mu \sum_{k=1}^d \|f_k\|_{L_2(Y)}^2
\]
for a.e.\ $x \in \Omega$.
\end{lemma}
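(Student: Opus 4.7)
The plan is to reduce the ellipticity of the modified coefficients to the ellipticity of the original coefficients by splitting any test vector into its real and imaginary parts and applying the hypothesis (\ref{eSposvec4;1}) twice.

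First, I would fix $x$ outside the null set on which (\ref{eSposvec4;1}) fails and let $f \in L_2(Y)^d$. Writing $f_l = u_l + i v_l$ with $u_l = \RRe f_l$ and $v_l = \IIm f_l$, the definition of $\widecheck{\;}$ gives
\[
(C_{kl}(x))\widecheck{\;}\, f_l = \RRe\bigl(C_{kl}(x) u_l\bigr) + i\, \RRe\bigl(C_{kl}(x) v_l\bigr).
\]
The central step is to verify the identity
\[
\RRe\bigl((C_{kl}(x))\widecheck{\;}\, f_l, f_k\bigr)_{L_2(Y)}
= \RRe\bigl(C_{kl}(x) u_l, u_k\bigr)_{L_2(Y)} + \RRe\bigl(C_{kl}(x) v_l, v_k\bigr)_{L_2(Y)}.
\]
This is a direct expansion: pair $\RRe(C_{kl}(x) u_l) + i\RRe(C_{kl}(x) v_l)$ with $\overline{f_k} = u_k - i v_k$, and observe that since $u_k, v_k$ are real-valued, $\int_Y \RRe(C_{kl}(x) u_l) \cdot u_k\, d\nu = \RRe(C_{kl}(x) u_l, u_k)_{L_2(Y)}$, while the remaining terms collapse similarly; the cross-terms involving $\RRe(C_{kl}(x) u_l) \cdot v_k$ and $\RRe(C_{kl}(x) v_l) \cdot u_k$ contribute only to the imaginary part and hence drop out.

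Second, summing the identity over $k, l$ and invoking (\ref{eSposvec4;1}) twice, applied to $(u_1,\ldots,u_d) \in L_2(Y,\Ri)^d \subset L_2(Y)^d$ and $(v_1,\ldots,v_d) \in L_2(Y,\Ri)^d \subset L_2(Y)^d$ respectively, yields
\[
\RRe \sum_{k,l=1}^d \bigl((C_{kl}(x))\widecheck{\;}\, f_l, f_k\bigr)_{L_2(Y)}
\geq \mu \sum_{k=1}^d \bigl( \|u_k\|_{L_2(Y)}^2 + \|v_k\|_{L_2(Y)}^2 \bigr)
= \mu \sum_{k=1}^d \|f_k\|_{L_2(Y)}^2,
\]
using $\|f_k\|_{L_2(Y)}^2 = \|u_k\|_{L_2(Y)}^2 + \|v_k\|_{L_2(Y)}^2$. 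Since the exceptional set in (\ref{eSposvec4;1}) is shared between the two applications, the estimate holds for a.e.\ $x \in \Omega$, completing the proof.

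There is no real obstacle here; the argument is purely algebraic once the expansion of the inner product in terms of real and imaginary parts is carried out cleanly. The only subtle point is making sure that the imaginary cross-terms really cancel in the real part, which is why writing $f_k$ and $f_l$ in real/imaginary components before computing is essential.
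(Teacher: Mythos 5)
Your proof is correct and is essentially the same as the paper's: both establish the algebraic identity $\RRe(\widecheck{Q}f,g)_{L_2(Y)} = \RRe(Q\RRe f,\RRe g)_{L_2(Y)} + \RRe(Q\IIm f,\IIm g)_{L_2(Y)}$ and then apply the ellipticity estimate separately to the real and imaginary parts of the test vector. The only stylistic difference is that you verify the identity by expanding the inner product, whereas the paper states it directly; the substance is identical.
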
 
\begin{proof}
Let $Q \in \cl(L_2(Y))$.
Let $f,g \in L_2(Y)$.
Then 
\begin{eqnarray*}
\RRe (\widecheck{Q}  f, g)_{L_2(Y)}
& = & (\RRe Q \RRe f, \RRe g)_{L_2(Y)} + (\RRe Q \IIm f, \IIm g)_{L_2(Y)}  \\
& = & \RRe (Q \RRe f, \RRe g)_{L_2(Y)} + \RRe (Q \IIm f, \IIm g)_{L_2(Y)}
.
\end{eqnarray*}
Hence if $f \in (L_2(Y))^d$, then 
\begin{eqnarray*}
\RRe \sum_{k,l=1}^d ((C_{kl}(x)) \, \widecheck{\;} \, f_l, f_k)_{L_2(Y)}
& = & \RRe \sum_{k,l=1}^d \Big( (C_{kl}(x) \RRe f, \RRe g)_{L_2(Y)} + (C_{kl}(x) \IIm f, \IIm g)_{L_2(Y)} \Big) \\
& \geq & \mu \sum_{k=1}^d \|\RRe f_k\|_{L_2(Y)}^2 + \mu \sum_{k=1}^d \|\IIm f_k\|_{L_2(Y)}^2
= \mu \sum_{k=1}^d \|f_k\|_{L_2(Y)}^2
\end{eqnarray*}
for almost every $x \in \Omega$.
\end{proof}

If the semigroup $S$ is positive, then $S$ leaves $L_2(\Omega,L_2(Y,\Ri))$
invariant and Corollary~\ref{cposvec200.61} is applicable.

We recall that $C_{12}(x)$ is in general not a multiplication operator for 
a.e.\ $x \in \Omega$ and it does not leave the space $L_2(Y,\Ri)$ invariant,
even if the semigroup $S$ is positive,
see Example~\ref{xposvec103}.
Also if the semigroup $S$ is positive and if 
$C_{12}(x)$ leaves the space $L_2(Y,\Ri)$ invariant for all $x \in \Omega$,
then  in general  the operator
$C_{12}(x)$ still is {\em not} a multiplication operator for a.e.\ $x \in \Omega$.
Again see Example~\ref{xposvec103} for a counterexample 
with $V = H^1_0(\Omega,L_2(Y))$.
Nevertheless, one has the following property for the symmetric part.

\begin{prop} \label{pposvec220}
Suppose $S$ is a positive semigroup.
Then for almost all $x \in \Omega$ 
and $k,l \in \{ 1,\ldots,d \} $ the operator $C_{kl}(x) + C_{lk}(x)$ is 
a multiplication operator in $L_2(Y)$ with a real-valued function.
\end{prop}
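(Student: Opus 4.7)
The plan is to combine Ouhabaz's invariance criterion for the positive cone with the characterisation of multiplication operators in Corollary~\ref{cposvec202}, exploiting the scaling-and-Lebesgue-point technique already used to prove Proposition~\ref{pposvec200.8}. Applied to the closed convex cone $L_2(\Omega, L_2(Y))_+$, whose projection sends $u$ to $(\RRe u)^+$, Ouhabaz's criterion turns positivity of $S$ into the statement that every real-valued $u \in V$ satisfies $u^+ \in V$ and $\RRe \gota(u^+, u^-) \leq 0$. Positivity also implies invariance of $L_2(\Omega, L_2(Y, \Ri))$, so Corollary~\ref{cposvec200.61} delivers a null set $N_0 \subset \Omega$ outside of which every $C_{kl}(x) + C_{lk}(x)$ leaves $L_2(Y, \Ri)$ invariant.

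Fix $\tilde k, \tilde l \in \{1, \ldots, d\}$ and a point $x_0 \in \Omega \setminus (N \cup N_0)$, where $N$ is the null set from Lemma~\ref{lposvec206}. Argue by contradiction: assume $Q := C_{\tilde k \tilde l}(x_0) + C_{\tilde l \tilde k}(x_0)$ is \emph{not} a multiplication operator. Corollary~\ref{cposvec202} then supplies $f \in L_2(Y, \Ri)$ with $f \geq 0$ and a measurable $B \subset [f = 0]$ with $0 < \nu(B) < \infty$ and $(Qf, \one_B)_{L_2(Y)} \neq 0$; this inner product is a nonzero \emph{real} number, since $Q$ preserves $L_2(Y, \Ri)$.

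For arbitrary $\tau \in \Ri$, Lemma~\ref{lposvec204} yields nonnegative $\varphi, \psi \in W^{1,\infty}_0((-1, 1)^d)$ whose derivative pairings isolate the index pair $(\tilde k, \tilde l)$. Rescale by $\varphi_\delta(x) := \varphi(\delta^{-1}(x - x_0))$ and $\psi_\delta(x) := \psi(\delta^{-1}(x - x_0))$, and form
\[
u_\delta := \varphi_\delta \otimes f - \psi_\delta \otimes \one_B,
\]
which lies in $H^1_0(\Omega, L_2(Y)) \subset V$ for $\delta > 0$ small enough that $x_0 + \delta(-1, 1)^d \subset \Omega$. The crucial separation is that $B \subset [f = 0]$ makes the essential supports of the two summands disjoint in the $Y$-variable, so $u_\delta^+ = \varphi_\delta \otimes f$ and $u_\delta^- = \psi_\delta \otimes \one_B$ directly. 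A short computation gives
\[
\gota(u_\delta^+, u_\delta^-) = \sum_{k,l=1}^d \int_\Omega (\partial_l \varphi_\delta)(x) \, (\partial_k \psi_\delta)(x) \, (C_{kl}(x) f, \one_B)_{L_2(Y)} \, dx,
\]
and the change of variables $x = x_0 + \delta z$ pulls out a factor $\delta^{d-2}$ in front of integrals to which Lemma~\ref{lposvec205} applies at the Lebesgue point $x_0$. Passing to the limit $\delta \downarrow 0$ and using the vanishing from Lemma~\ref{lposvec204} (so that only the pairs in $\{(\tilde k, \tilde l), (\tilde l, \tilde k)\}$ contribute) produces the limiting value $\tau \, (Qf, \one_B)_{L_2(Y)}$ if $\tilde k \neq \tilde l$, and $\tfrac{\tau}{2} (Qf, \one_B)_{L_2(Y)}$ if $\tilde k = \tilde l$.

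Ouhabaz's inequality $\RRe \gota(u_\delta^+, u_\delta^-) \leq 0$, divided by $\delta^{d-2} > 0$ and passed to the limit, now gives $\tau \, (Qf, \one_B)_{L_2(Y)} \leq 0$. Since $\tau$ ranges over all of $\Ri$, this forces $(Qf, \one_B)_{L_2(Y)} = 0$, contradicting our choice of $f$ and $B$. Hence $Q$ is a multiplication operator, and its preservation of $L_2(Y, \Ri)$ makes the multiplying function real-valued. The main obstacle is that the form controls only the symmetric combinations $C_{kl} + C_{lk}$ (as Proposition~\ref{pposvec200.8} and Example~\ref{xposvec200.95} show); overcoming this requires the tent-like construction of Lemma~\ref{lposvec204}, whose integrals vanish on every index pair other than the symmetric pair $\{(\tilde k, \tilde l), (\tilde l, \tilde k)\}$, combined with the scaling-to-a-Lebesgue-point argument that localises the form identity at $x_0$.
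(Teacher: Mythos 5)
Your proof is correct and follows essentially the same route as the paper: reduce via Ouhabaz's criterion and Corollary~\ref{cposvec200.61}, invoke Corollary~\ref{cposvec202} to get $f$ and $B$ with $(Qf,\one_B)_{L_2(Y)}\in\Ri\setminus\{0\}$, rescale the tent functions from Lemma~\ref{lposvec204} to the Lebesgue point $x_0$, and derive a contradiction from $\gota(u^+,u^-)\leq 0$. The only (cosmetic) difference is at the end: the paper fixes $\tau=(Qf,\one_B)_{L_2(Y)}$ and a single sufficiently small $\delta$ so that the error term is beaten by $\tfrac12|(Qf,\one_B)|^2$, whereas you vary $\tau$ over all of $\Ri$ and pass to the limit $\delta\downarrow 0$ to conclude $\tau\,(Qf,\one_B)_{L_2(Y)}\leq 0$ for every $\tau$; both variants close the argument in the same way.
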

\begin{proof}
Write $V_\Ri = V \cap L_2(\Omega,L_2(Y,\Ri))$.
Since the semigroup $S$ is positive, it follows from 
\cite{Ouh5} Theorem 2.6 1)$\Rightarrow$4) that $\RRe u \in V_\Ri$ for 
all $u \in V$ and
$\gota(u^+,u^-) \leq 0$
for all $u \in V_\Ri$.
Let $N \subset \Omega$ be the null set as in Lemma~\ref{lposvec206}.
By Corollary~\ref{cposvec200.61} we may assume that 
$C_{kl}(x) + C_{lk}(x) = (C_{kl}(x) + C_{lk}(x)) \, \widecheck{\;}$
for all $k,l \in \{ 1,\ldots,d \} $ and $x \in \Omega \setminus N$.
Let $x_0 \in \Omega \setminus N$.
Let $\tilde k, \tilde l \in \{ 1,\ldots,d \} $.
Write $Q = C_{\tilde k \tilde l}(x_0) + C_{\tilde l \tilde k}(x_0)$.
Suppose that $Q$ is not a multiplication operator in $L_2(Y)$.
We shall show that there exists a $u \in V_\Ri$ such that $\gota(u^+,u^-) > 0$,
which is a contradiction.

By Corollary~\ref{cposvec202} there exist $f \in L_2(Y,\Ri)$ and a 
measurable set $B \subset Y$
such that $f \geq 0$, $B \subset [f = 0]$, $0 < \nu(B) < \infty$
and $(Qf, \one_B)_{L_2(Y)} \neq 0$.
Since $Q = \widecheck{Q}$ it follows that 
$(Qf, \one_B)_{L_2(Y)} \in \Ri \setminus \{ 0 \} $.
Let $\varphi,\psi \in W^{1,\infty}_0((-1,1)^d)$ be as in Lemma~\ref{lposvec204}
with the choice $\tau = (Qf, \one_B)_{L_2(Y)}$.
Observe that $(-1,1)^d \subset B(0,d)$.
There exists a $\delta_0 > 0$ such that $B(x_0, d \delta_0) \subset \Omega$.
By Lemmas~\ref{lposvec206} and \ref{lposvec205}
there exists a $\delta \in (0,\delta_0]$ such that 
\begin{eqnarray*}
\lefteqn{
  \Big| \sum_{k,l=1}^d \int_{B(0,d)} ((\partial_l \varphi) \, \partial_k \psi)(z) \, 
     \Big( (C_{kl}(x_0 + \delta z) f, \one_B)_{L_2(Y)} - (C_{kl}(x_0) f, \one_B)_{L_2(Y)} \Big) \, dz  
  \Big|
} \hspace*{120mm}  \\*
& < & \frac{1}{2} \, |(Q f, \one_B)_{L_2(Y)}|^2
.  
\end{eqnarray*}
Define $\varphi_\delta,\psi_\delta \in W^{1,\infty}_c(\Ri^d)$
by $\varphi_\delta(x) = \varphi(\delta^{-1} \, (x-x_0))$ and
$\psi_\delta(x) = \psi(\delta^{-1} \, (x-x_0))$.
Define $u \in H^1_0(\Omega,L_2(Y))$ by 
$u(x) = \varphi_\delta(x) \, f - \psi_\delta(x) \, \one_B$.
Then $u^+(x) = \varphi_\delta(x) \, f$ and $u^-(x) = \psi_\delta(x) \, \one_B$
for almost every $x \in \Omega$.
Moreover, $u^+,u^- \in H^1_0(\Omega,L_2(Y)) \subset V$.
One calculates
\begin{eqnarray*}
\gota(u^+,u^-)
& = & \sum_{k,l=1}^d \int_\Omega (C_{kl}(x) \, (\partial_l u^+)(x), (\partial_k u^-)(x))_{L_2(Y)} \, dx  \\
& = & \sum_{k,l=1}^d \int_\Omega (\partial_l \varphi_\delta)(x) \, (\partial_k \psi_\delta)(x) \, 
     (C_{kl}(x) f, \one_B)_{L_2(Y)} \, dx  \\
& = & \delta^{d - 2} \sum_{k,l=1}^d \int_{B(0,d)} ((\partial_l \varphi) \, \partial_k \psi)(z) \, 
     (C_{kl}(x_0 + \delta z) f, \one_B)_{L_2(Y)} \, dz  \\
& = & \delta^{d - 2} \bigg( 
\sum_{k,l=1}^d \int_{\Ri^d} ((\partial_l \varphi) \, \partial_k \psi)(z) \, 
     (C_{kl}(x_0) f, \one_B)_{L_2(Y)} \, dz  
\\*
& &  {}
+ \sum_{k,l=1}^d \int_{B(0,d)} ((\partial_l \varphi) \, \partial_k \psi)(z) \cdot  \\*
& & \hspace*{30mm} {} \cdot
     \Big( (C_{kl}(x_0 + \delta z) f, \one_B)_{L_2(Y)} - (C_{kl}(x_0) f, \one_B)_{L_2(Y)} \Big) \, dz  
                    \bigg)
.
\end{eqnarray*}
We distinguish two cases.

\noindent
{\bf Case 1.} Suppose that $\tilde k \neq \tilde l$. 
Then 
\begin{eqnarray*}
\lefteqn{
\sum_{k,l=1}^d \int_{\Ri^d} ((\partial_l \varphi) \, \partial_k \psi)(z) \, 
     (C_{kl}(x_0) f, \one_B)_{L_2(Y)} \, dz  
} \hspace*{5mm} \\*
& = & \tau \, (C_{\tilde k \tilde l}(x_0) f, \one_B)_{L_2(Y)}
   + \tau \, (C_{\tilde l \tilde k}(x_0) f, \one_B)_{L_2(Y)} 
= \tau \, (Q f, \one_B)_{L_2(Y)}
= |(Q f, \one_B)_{L_2(Y)}|^2
.
\end{eqnarray*}

\noindent
{\bf Case 2.} Suppose that $\tilde k = \tilde l$. 
Then similarly 
\[
\sum_{k,l=1}^d \int_{\Ri^d} ((\partial_l \varphi) \, \partial_k \psi)(z) \, 
     (C_{kl}(x_0) f, \one_B)_{L_2(Y)} \, dz  
= \tau \, (C_{\tilde k \tilde k}(x_0) f, \one_B)_{L_2(Y)}
= \frac{1}{2} \, |(Q f, \one_B)_{L_2(Y)}|^2
.  \]

So in both cases 
\begin{eqnarray*}
\lefteqn{
\sum_{k,l=1}^d \int_{\Ri^d} ((\partial_l \varphi) \, \partial_k \psi_\delta)(z) \, 
     (C_{kl}(x_0) f, \one_B)_{L_2(Y)} \, dz  
} \hspace*{10mm} \\*
& \geq & \frac{1}{2} \, |(Q f, \one_B)_{L_2(Y)}|^2  \\
& > & \Big| \sum_{k,l=1}^d \int_{B(0,d)} ((\partial_l \varphi) \, \partial_k \psi)(z) \, 
     \Big( (C_{kl}(x_0 + \delta z) f, \one_B)_{L_2(Y)} - (C_{kl}(x_0) f, \one_B)_{L_2(Y)} \Big) \, dz 
  \Big|
\end{eqnarray*}
by the choice of $\delta$.
Hence $\gota(u^+,u^-) > 0$.
This gives the required contradiction.

We proved that $C_{\tilde k \tilde l}(x_0) + C_{\tilde l \tilde k}(x_0)$
is a multiplication operator in $L_2(Y)$ for all $x_0 \in \Omega \setminus N$.
Since $C_{kl}(x) + C_{lk}(x) = (C_{kl}(x) + C_{lk}(x)) \, \widecheck{\;}$
maps real-valued functions into real-valued functions, it follows that 
the multiplication operator is the multiplication with a real-valued function.
\end{proof}

We finish this section with an extension of the implication
`\ref{tposvec101-2}$\Rightarrow$\ref{tposvec101-1}' in Theorem~\ref{tposvec101}.

\begin{prop} \label{pposvec420}
Let $\Omega \subset \Ri^d$ be open and 
$(Y,\Sigma,\nu)$ be a $\sigma$-finite measure space such 
that $L_2(Y)$ is separable.
For all $k,l \in \{ 1,\ldots,d \} $ let
$c_{kl} \colon \Omega \times Y \to \Ri$ be a bounded measurable function.
Let $V$ be a closed subspace of $H^1(\Omega,L_2(Y))$ such that 
$C_c^\infty(\Omega,L_2(Y)) \subset V$.
Suppose that $(\RRe u)^+ \in V$ for all $u \in V$.
Let $\gotb \colon V \times V \to \Ci$ be a continuous elliptic accretive
sesquilinear form.
Let $B$ be the operator in $L_2(\Omega,L_2(Y))$
associated with the form $\gotb$ and $T$ the semigroup
generated by $-B$.
Suppose that 
\[
\gotb(u,v)
= \sum_{k,l=1}^d \int_\Omega \int_Y
      c_{kl}(x,y) \, ((\partial_l u)(x))(y) \, \overline{((\partial_k v)(x))(y)} \, dy \, dx
\]
for all $u,v \in V$.
Then $T$ is a positive semigroup.
\end{prop}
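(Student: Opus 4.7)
The strategy is to invoke Ouhabaz's invariance criterion for the positive cone (\cite{Ouh5} Theorem~2.6), according to which $T$ is positive precisely when (i) $V$ is stable under the projection $u \mapsto (\RRe u)^+$ onto $L_2(\Omega, L_2(Y))_+$ and (ii) $\RRe \gotb((\RRe u)^+, u - (\RRe u)^+) \ge 0$ for every $u \in V$. Condition~(i) is our standing hypothesis. Applying it also to $-u$ shows that $(\RRe u)^\pm \in V$ for every $u \in V$, and hence $\RRe u \in V$; by sesquilinearity, condition~(ii) therefore reduces to the assertion
\[
\gotb(u^+, u^-) \le 0 \qquad \text{for every real-valued } u \in V.
\]

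To verify this, I would pass through the unitary identification $\Phi \colon L_2(\Omega \times Y) \to L_2(\Omega, L_2(Y))$ of~(\ref{epposvec230;1}), viewing a real-valued $u \in V$ as a real-valued function on $\Omega \times Y$. The Stampacchia-type chain rule recorded just before Lemma~\ref{lposvec213} then yields
\[
\partial_k u^+ = \one_{[u > 0]} \, \partial_k u \quad \text{and} \quad \partial_k u^- = -\one_{[u < 0]} \, \partial_k u
\]
almost everywhere on $\Omega \times Y$ for each $k$. Since the sets $[u > 0]$ and $[u < 0]$ are disjoint, the pointwise product $(\partial_l u^+)(x,y) \, (\partial_k u^-)(x,y)$ vanishes almost everywhere for every $k, l$. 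Inserting this into the explicit formula defining $\gotb(u^+, u^-)$ (where the complex conjugate in the integrand is harmless because every factor is real) gives $\gotb(u^+, u^-) = 0$, which is trivially $\le 0$.

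This is essentially a direct transcription of the argument for scalar elliptic operators (\cite{Ouh5} Theorem~4.2); the only new feature is the use of the unitary equivalence $L_2(\Omega, L_2(Y)) \simeq L_2(\Omega \times Y)$ to transport the scalar chain rule for $u^+$ into the $L_2(Y)$-valued setting, which is precisely what Section~\ref{Sposvec2} prepares. Accordingly I do not anticipate any genuine obstacle beyond this routine bookkeeping.
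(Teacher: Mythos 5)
Your proposal is correct and follows essentially the same route as the paper: both reduce to Ouhabaz's invariance criterion (you cite Theorem~2.6, the paper cites the general Theorem~2.2 specialized to the positive cone, which is equivalent here), use the chain rule $\partial_k(u^\pm) = \one_{[\pm u > 0]}\,\partial_k u$ transported via the unitary $L_2(\Omega,L_2(Y)) \simeq L_2(\Omega \times Y)$, and conclude $\gotb(u^+,u^-) = 0$ from the disjointness of $[u>0]$ and $[u<0]$.
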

\begin{proof}
Let $u \in V$.
Then $(\RRe u)^+ \in V$ by assumption.
Moreover, 
\[
\RRe \gotb( (\RRe u)^+ , u - (\RRe u)^+ )
= - \gotb( (\RRe u)^+ , (\RRe u)^- )  
= 0
.  \]
since $\partial_l ((\RRe u)^+) = \one_{[\RRe u > 0]} \, \partial_l \RRe u$ 
and $\partial_k ((\RRe u)^-) = \one_{[\RRe u < 0]} \, \partial_k \RRe u$.
Then the semigroup $T$ leaves the closed convex set
$L_2(Y)^+$ invariant by \cite{Ouh5} Theorem 2.2 2)$\Rightarrow$1).
So $T$ is a positive semigroup.
\end{proof}

\section{Positive semigroups} \label{Sposvec5}

In this section we prove the implication `\ref{tposvec101-1}$\Rightarrow$\ref{tposvec101-2}'
in Theorem~\ref{tposvec101} and various extensions.
Throughout this section we make the following assumptions.

Let $\Omega \subset \Ri^d$ be open and 
$(Y,\Sigma,\nu)$ be a $\sigma$-finite measure space such 
that $L_2(Y)$ is separable.
For all $k,l \in \{ 1,\ldots,d \} $ let $C_{kl} \colon \Omega \to \cl(L_2(Y))$
be a function such that $x \mapsto (C_{kl}(x) f,g)_{L_2(Y)}$ is 
a bounded measurable function from $\Omega$ into $\Ci$ 
for all $f,g \in L_2(Y)$.
Suppose there is a $\mu > 0$ such that 
\[
\RRe \sum_{k,l=1}^d (C_{kl}(x) f_l, f_k)_{L_2(Y)}
\geq \mu \sum_{k=1}^d \|f_k\|_{L_2(Y)}^2
\]
for all $f \in L_2(Y)^d$ and a.e.\ $x \in \Omega$.
Let $V_\Omega$ be a closed subspace of $H^1(\Omega)$ such that 
$C_c^\infty(\Omega) \subset V_\Omega$.
Suppose $(\RRe \varphi)^+ \in V_\Omega$ for all $\varphi \in V_\Omega$.
Let $V$ be the closure in $H^1(\Omega,L_2(Y))$ of 
\[
\spann \{ \varphi \otimes f : 
    \varphi \in V_\Omega \mbox{ and } f \in L_2(Y) \} 
.  \]
Note that if $V_\Omega = H^1_0(\Omega)$, then $V = H^1_0(\Omega, L_2(Y))$,
and if $V_\Omega = H^1(\Omega)$, then $V = H^1(\Omega, L_2(Y))$.
Define $\gota \colon V \times V \to \Ci$ by
\[
\gota(u,v)
= \sum_{k,l=1}^d \int_\Omega 
    (C_{kl}(x) \, (\partial_l u)(x), (\partial_k v)(x))_{L_2(Y)} \, dx
.  \]
Let $A$ be the operator in $L_2(\Omega,L_2(Y))$
associated with $\gota$ and $S$ the semigroup
generated by $-A$.

\begin{thm} \label{tposvec405}
Suppose the semigroup $S$ is positive.
Moreover, suppose at least one of the following conditions.
\begin{tabelR}
\item \label{tposvec405-1}
The form $\gota$ is symmetric.
\item \label{tposvec405-2}
For all $k,l \in \{ 1,\ldots,d \} $ the function $C_{kl} \colon \Omega \to \cl(L_2(Y))$
is separably valued.
\item \label{tposvec405-3}
$d = 2$ and $V_\Omega = H^1_0(\Omega)$.
\item \label{tposvec405-4}
$d = 2$, $V_\Omega = H^1(\Omega)$ and $\Omega$ is bounded.
\item \label{tposvec405-5}
$L_2(Y) = \ell_2$.
\end{tabelR}
Then for all $k,l \in \{ 1,\ldots,d \} $ there exist bounded measurable functions
$c_{kl} \colon \Omega \times Y \to \Ri$ such that
\[
\gota(u,v)
= \sum_{k,l=1}^d \int_\Omega \int_Y
      c_{kl}(x,y) \, ((\partial_l u)(x))(y) \, \overline{((\partial_k v)(x))(y)} \, dy \, dx
\]
for all $u,v \in V$.
\end{thm}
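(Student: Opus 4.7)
The plan is to build candidate scalar coefficients $c_{kl}$ from the symmetric part of the $(C_{kl})_{k,l=1}^d$ using Proposition~\ref{pposvec220}, define the target form $\tilde\gota$ from these, and then show in each of the five cases that the remaining antisymmetric operator residue contributes nothing.

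Since $S$ is positive, Proposition~\ref{pposvec220} provides, for every pair $k,l$ and almost every $x\in\Omega$, that $C_{kl}(x)+C_{lk}(x)$ is multiplication by a bounded real-valued function on $Y$. I take $c_{kl}(x,y)\in\Ri$ to be one-half this multiplier, so that $c_{kl}=c_{lk}$ and $|c_{kl}|\leq M$; joint $(x,y)$-measurability follows from the countable-dense-subset argument underlying Lemma~\ref{lposvec206} together with the separability of $L_2(Y)$. Write $M_{kl}(x)$ for multiplication by $c_{kl}(x,\cdot)$ and set $D_{kl}(x):=C_{kl}(x)-M_{kl}(x)$, so that $D_{kl}(x)+D_{lk}(x)=0$ for a.e.\ $x$. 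By Lemma~\ref{lposvec213} it suffices to show that the residual form $\gotb:=\gota-\tilde\gota$ satisfies $\gotb(\varphi\otimes f,\psi\otimes g)=0$ for every $\varphi,\psi\in V_\Omega$ and $f,g\in L_2(Y)$.

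In case \ref{tposvec405-1}, after the symmetrisation $C_{kl}\mapsto\tfrac12(C_{kl}+C_{lk}^*)$ (which preserves $\gota$ because it is symmetric), the $D_{kl}$ become simultaneously skew-adjoint and index-antisymmetric; combined with Ouhabaz's invariance $\gota(u^+,u^-)\leq 0$ together with the fact that $\tilde\gota(u^+,u^-)=0$ for real $u$ (since the pointwise supports of $\partial_l u^+$ and $\partial_k u^-$ in $(x,y)$ are disjoint and $M_{kl}$ is a multiplication), a repetition of the Lebesgue-point construction of Proposition~\ref{pposvec220} forces $D_{kl}=0$. In case \ref{tposvec405-3}, where only the pair $(1,2)$ is off-diagonal, a Lebesgue-point localisation together with Lemma~\ref{lposvec231} applied to the frozen operator $D_{12}(x_0)$ yields the vanishing directly (the sum from $(k,l)=(1,2)$ and $(2,1)$ collapses to a fixed-operator integral that vanishes by Lemma~\ref{lposvec231}). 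In case \ref{tposvec405-4}, the unique determination of operator coefficients furnished by Proposition~\ref{pposvec200.9} combined with the fact that $\tilde\gota$ also generates a positive semigroup (Proposition~\ref{pposvec420}) identifies $\gota=\tilde\gota$. In case \ref{tposvec405-5}, the atomic structure of $Y$ identifies multiplication operators with diagonal matrices, and Theorem~\ref{tposvec110} delivers the decoupling that forces each $C_{kl}(x)$ to be diagonal a.e.

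The main obstacle is case \ref{tposvec405-2}. The key tool is Voigt's projection $P\colon\cl(L_2(Y))\to\cl(L_2(Y))$ onto the multiplication operators, applied pointwise in $x$. Two things must be handled: weak measurability of $x\mapsto P(C_{kl}(x))$, which is where the separable-valuedness assumption and the Bochner/weak$^*$-integration framework advertised in the introduction become essential; and the vanishing of the residual $\gotb$ associated to the kernel part $D_{kl}(x)=C_{kl}(x)-P(C_{kl}(x))$. This second step---reducing the vanishing to the constant-coefficient identity of Lemma~\ref{lposvec231} by approximating $C_{kl}(\cdot)$ by simple $\cl(L_2(Y))$-valued functions in a topology compatible with $P$, and controlling the cross terms in the sesquilinear pairing---is the technical heart of the proof.
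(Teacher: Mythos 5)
Your overall plan---write $C_{kl}=M_{kl}+D_{kl}$ with $M_{kl}$ multiplication by $\tfrac12$ of the multiplier of $C_{kl}+C_{lk}$, define $\tilde\gota$ from the $c_{kl}$, and show the residual form $\gotb=\gota-\tilde\gota$ vanishes on elementary tensors---is the right starting point, but you never isolate the key algebraic fact the paper extracts first (its Lemma~\ref{lposvec401}): if $S$ is positive and $f\,g=0$, then $\gota(\varphi\otimes f,\psi\otimes g)=0$ for all $\varphi,\psi\in V_\Omega$. This uses Proposition~\ref{pposvec220} to collapse the sum over $(k,l)$ to the antisymmetric combination $\sum_{k<l}\int (C_{kl}f,g)\,\bigl(\partial_l\varphi\,\overline{\partial_k\psi}-\partial_k\varphi\,\overline{\partial_l\psi}\bigr)$, and then exploits the $\varphi\leftrightarrow\psi$ antisymmetry against the $\gota(u^+,u^-)\leq 0$ inequality. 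Without this lemma none of the five cases closes. Moreover, your conclusion in case~\ref{tposvec405-1} that the Lebesgue-point argument ``forces $D_{kl}=0$'' is false: even after your symmetrisation the off-diagonal $C_{12},C_{21}$ are only determined up to adding a constant skew-adjoint operator to $C_{12}$ and subtracting it from $C_{21}$ (e.g.\ $\pm iI$), which preserves both the form and its symmetry while changing $D_{12}$. The conclusion one can reach is $\gotb=0$ as a form, not $D_{kl}=0$, and the paper obtains it in case~(I) by polarisation (for equal $f$) combined with the vanishing lemma (for disjoint indicators).

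The other cases have further gaps. In case~\ref{tposvec405-2} the paper does not approximate $C_{kl}$ by simple functions nor invoke Lemma~\ref{lposvec231}; instead it forms the Bochner integral $T_{\varphi,\psi}=\sum_{k,l}\int_\Omega(\partial_l\varphi)\,\overline{(\partial_k\psi)}\,C_{kl}(x)\,dx$, uses the vanishing lemma to show $T_{\varphi,\psi}$ is a multiplication operator, and then commutes Voigt's projection $P$ with the Bochner integral to replace $C_{kl}$ by $P(C_{kl})$; your residual $D_{kl}(x)$ typically does \emph{not} integrate to zero pointwise, so the ``approximation in a topology compatible with $P$'' route does not reduce to a constant-coefficient statement. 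In case~\ref{tposvec405-4} your argument is a non-sequitur/circularity: Proposition~\ref{pposvec200.9} applied to $\gota-\tilde\gota$ would only give information once you already know this form vanishes, and the fact that $\tilde\gota$ also generates a positive semigroup (Proposition~\ref{pposvec420}) does not identify the two forms. The paper instead shows (via the vanishing lemma) that $x\mapsto (C_{12}(x)f,g)$ is constant, then zero, on connected components when $f\,g=0$, so $C_{12}(x)$ is a multiplication operator. In case~\ref{tposvec405-5} your appeal to Theorem~\ref{tposvec110} is circular---that theorem is a corollary of Theorem~\ref{tposvec405}\ref{tposvec405-5}---and the claim that each $C_{kl}(x)$ must be diagonal is false (see Example~\ref{xposvec407}); the actual proof (Theorem~\ref{tposvec606}) replaces Bochner integration by the weak$^*$-integral against $\cl(\ell_2)=(\cb_1(\ell_2))^*$ and uses the preadjoint (Lemma~\ref{lposvec605}) of the diagonal projection to commute it with the integral.
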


After a preparing lemma we will give the proof of Theorem~\ref{tposvec405}
under the assumptions~\ref{tposvec405-1}, \ref{tposvec405-2}, \ref{tposvec405-3}
or \ref{tposvec405-4}.
Its proof under assumption~\ref{tposvec405-5} is postponed to Section~\ref{Sposvec6}.

Define $V_\Omega(\Ri) = V_\Omega \cap H^1(\Omega,\Ri)$ and 
$V_\Omega^+ = V_\Omega \cap H^1(\Omega)^+$.
Then $V_\Omega^+ \subset V_\Omega(\Ri) \subset V_\Omega$ by assumption and 
linearity.
A major step in the proof of Theorem~\ref{tposvec405} is the following 
equality.

\begin{lemma} \label{lposvec401}
Suppose the semigroup $S$ is positive.
Let $\varphi,\psi \in V_\Omega$ and $f,g \in L_2(Y)$ with $f \, g = 0$.
Then $\gota(\varphi \otimes f, \psi \otimes g) = 0$.
\end{lemma}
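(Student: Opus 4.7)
The plan is to reduce the lemma to the case of real, nonnegative data and then apply the positivity of $S$ twice---with $\varphi$ and $\psi$ interchanged---combined with an antisymmetry identity coming from Proposition~\ref{pposvec220}.

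\emph{Reductions.} Positivity of $S$ implies, via Ouhabaz's invariance criterion, that $V$ is stable under $u \mapsto (\RRe u)^+$ and that $\gota$ takes real values on pairs of real-valued inputs in $V$ (this last point is already encoded in Corollary~\ref{cposvec200.61} or Proposition~\ref{pposvec200.3}). Since $fg = 0$ a.e., the essential supports of $f$ and $g$ are disjoint, so writing $f = f_1 + i f_2$ and $g = g_1 + i g_2$ with real $f_i, g_j$, every product $f_i g_j$ still vanishes a.e. Sesquilinearity of $\gota$ then reduces the claim to real $\varphi, \psi, f, g$ with $fg = 0$. Splitting further $f = f^+ - f^-$, $g = g^+ - g^-$, $\varphi = \varphi^+ - \varphi^-$, $\psi = \psi^+ - \psi^-$---the disjointness of the supports of $f$ and $g$ passes to $f^\pm, g^\pm$, and $\varphi^\pm, \psi^\pm \in V_\Omega$ by the $(\RRe \cdot)^+$-invariance---I reduce to the case $\varphi, \psi \in V_\Omega^+$ and $f, g \in L_2(Y)^+$ with $fg = 0$.

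\emph{Ouhabaz, twice.} For this reduced case I consider $u = \varphi \otimes f - \psi \otimes g \in V$. Because $\varphi, \psi, f, g \geq 0$ and the essential supports of $f$ and $g$ are disjoint, a pointwise inspection gives $u^+ = \varphi \otimes f$ and $u^- = \psi \otimes g$. Ouhabaz's invariance criterion for the positive cone then yields
\[
\gota(\varphi \otimes f, \psi \otimes g) = \gota(u^+, u^-) \leq 0.
\]
Applying the same reasoning to $u' = \psi \otimes f - \varphi \otimes g$ gives $\gota(\psi \otimes f, \varphi \otimes g) \leq 0$.

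\emph{Antisymmetry and conclusion.} By Proposition~\ref{pposvec220}, for a.e.\ $x \in \Omega$ the operator $C_{kl}(x) + C_{lk}(x)$ is multiplication by a real-valued function $m_{kl}(x, \cdot)$, so
\[
\bigl((C_{kl}(x) + C_{lk}(x)) f, g\bigr)_{L_2(Y)} = \int_Y m_{kl}(x, y) f(y) g(y)\, dy = 0
\]
for a.e.\ $x$, thanks to $fg = 0$. Writing $\alpha_{kl}(x) := (C_{kl}(x) f, g)_{L_2(Y)}$, this reads $\alpha_{kl}(x) + \alpha_{lk}(x) = 0$ a.e. Relabeling the summation indices in the expansion $\gota(\psi \otimes f, \varphi \otimes g) = \sum_{k,l} \int_\Omega (\partial_l \psi)(x) (\partial_k \varphi)(x) \, \alpha_{kl}(x) \, dx$ and invoking this antisymmetry yields the identity $\gota(\psi \otimes f, \varphi \otimes g) = -\gota(\varphi \otimes f, \psi \otimes g)$. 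Hence both quantities are real and $\leq 0$ but sum to zero, so both vanish. The main obstacle is recognising this antisymmetry: Ouhabaz on its own produces only the one-sided inequality $\leq 0$, and equality to zero is extracted only by pairing the swapped inequality with the antisymmetry of the off-diagonal coefficients $\alpha_{kl}(x)$ forced by Proposition~\ref{pposvec220}. The initial reductions are largely bookkeeping, the one subtle point being that disjointness of $\supp f$ and $\supp g$ survives every decomposition used.
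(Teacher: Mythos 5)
Your proposal is correct and follows essentially the same route as the paper: Proposition~\ref{pposvec220} gives the antisymmetry $(C_{kl}(x)f,g) + (C_{lk}(x)f,g) = 0$ (the paper encodes this in the rewritten sum~(\ref{elposvec401;1})), after which Ouhabaz's criterion applied to $u = \varphi\otimes f - \psi\otimes g$ and to the swap $\psi\otimes f - \varphi\otimes g$ pins the quantity between $0$ and $0$. The only cosmetic difference is ordering: the paper establishes the antisymmetric rewriting of $\gota(\varphi\otimes f,\psi\otimes g)$ first for general complex data and reduces to nonnegative data afterwards, while you carry out the real/nonnegative reductions first and then derive the antisymmetry, but the content is identical.
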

\begin{proof}
The operators $C_{kl}(x) + C_{lk}(x)$ are multiplication operators on $L_2(Y)$
for almost every $x \in \Omega$ and all $k,l \in \{ 1,\ldots,d \} $
by Proposition~\ref{pposvec220}.
Let $\varphi,\psi \in V_\Omega$ and $f,g \in L_2(Y)$ with $f \, g = 0$.
Then
\begin{eqnarray}
\gota(\varphi \otimes f, \psi \otimes g)
& = & \sum_{k,l=1}^d \int_\Omega (C_{kl}(x) f, g)_{L_2(Y)} \, 
      (\partial_l \varphi)(x) \, \overline{(\partial_k \psi)(x)} \, dx  \label{elposvec401;1}  \\
& = & \sum_{k<l} \int_\Omega (C_{kl}(x) f, g)_{L_2(Y)} \, 
      (\partial_l \varphi)(x) \, \overline{(\partial_k \psi)(x)} \, dx  \nonumber  \\*
& & \hspace*{10mm} {}
   + \sum_{k<l} \int_\Omega (C_{lk}(x) f, g)_{L_2(Y)} \, 
      (\partial_k \varphi)(x) \, \overline{(\partial_l \psi)(x)} \, dx   \nonumber \\
& = & \sum_{k<l} \int_\Omega (C_{kl}(x) f, g)_{L_2(Y)} \, 
      \Big( (\partial_l \varphi)(x) \, \overline{(\partial_k \psi)(x)} 
          - (\partial_k \varphi)(x) \, \overline{(\partial_l \psi)(x)} \Big) \, dx  \nonumber 
,
\end{eqnarray}
where we used that 
\[
(C_{lk}(x) f, g)_{L_2(Y)} + (C_{kl}(x) f, g)_{L_2(Y)}
= ((C_{kl}(x) + C_{lk}(x)) f, g)_{L_2(Y)}
= 0
.  \]
Now let $\varphi,\psi \in V_\Omega^+$ and $f,g \in L_2(Y)^+$ with $f \, g = 0$.
Define $u = \varphi \otimes f - \psi \otimes g$.
Then $u \in V$, $u^+ = \varphi \otimes f$ and $u^- = \psi \otimes g$.
Since the semigroup $S$ is positive, it follows from 
\cite{Ouh5} Theorem 2.6 1)$\Rightarrow$4) that
$\gota(u^+,u^-) \leq 0$.
So 
\begin{eqnarray*}
0
& \geq & \gota(u^+,u^-)
= \gota(\varphi \otimes f, \psi \otimes g)  \\
& = & \sum_{k<l} \int_\Omega (C_{kl}(x) f, g)_{L_2(Y)} \, 
      \Big( (\partial_l \varphi)(x) \, (\partial_k \psi)(x) 
          - (\partial_k \varphi)(x) \, (\partial_l \psi)(x) \Big) \, dx 
\end{eqnarray*}
by (\ref{elposvec401;1}).
Swapping $\varphi$ and $\psi$ gives
\[
0 
= \sum_{k<l} \int_\Omega (C_{kl}(x) f, g)_{L_2(Y)} \, 
      \Big( (\partial_l \varphi)(x) \, (\partial_k \psi)(x) 
          - (\partial_k \varphi)(x) \, (\partial_l \psi)(x) \Big) \, dx 
= \gota(\varphi \otimes f, \psi \otimes g)
,  \]
where we again used (\ref{elposvec401;1}) in the last step.
Then by linearity the lemma follows.
\end{proof}

\begin{proof}[{\bf Proof of Theorem~\ref{tposvec405}.}]
By Proposition~\ref{pposvec200.3} we may assume that $C_{kl}(x) = (C_{kl}(x)) \, \widecheck{\;}$
for all $k,l \in \{ 1,\ldots,d \} $ and almost all $x \in \Omega$.

`\ref{tposvec405-1}'.
We now assume in addition that the form $\gota$ is symmetric.
Let $k,l \in \{ 1,\ldots,d \} $.
By Proposition~\ref{pposvec220} the operator $C_{kl}(x) + C_{lk}(x)$ 
is a multiplication operator on $L_2(Y)$ 
with a real-valued function for almost every $x \in \Omega$.
Hence by Proposition~\ref{pposvec230}\ref{pposvec230-1}$\Rightarrow$\ref{pposvec230-2}
there exist bounded measurable functions $c_{kl} = c_{lk} \colon \Omega \times Y \to \Ri$
such that for all $f \in L_2(Y)$ one has 
\[
\Big( (C_{kl}(x) + C_{lk}(x)) f \Big) (y)
= (c_{kl}(x,y) + c_{lk}(x,y)) \, f(y)
\quad \mbox{for a.e.\ } (x,y) \in \Omega \times Y
.  \]
Define $\gotb \colon V \times V \to \Ci$ by 
\[
\gotb(u,v)
= \sum_{k,l=1}^d \int_\Omega \int_Y
      c_{kl}(x,y) \, ((\partial_l u)(x))(y) \, \overline{((\partial_k v)(x))(y)} \, dy \, dx
.  \]
Let $f \in L_2(Y)$ and $\varphi \in V_\Omega(\Ri)$.
Then 
\begin{eqnarray*}
\gota(\varphi \otimes f, \varphi \otimes f)
& = & \sum_{k,l=1}^d \int_\Omega (C_{kl}(x) f, f)_{L_2(Y)} \,
      (\partial_l \varphi)(x) \, (\partial_k \varphi)(x) \, dx  \\
& = & \frac{1}{2} \sum_{k,l=1}^d \int_\Omega ((C_{kl}(x) + C_{lk}(x)) f, f)_{L_2(Y)} \,
      (\partial_l \varphi)(x) \, (\partial_k \varphi)(x) \, dx  \\
& = & \sum_{k,l=1}^d \int_\Omega \int_Y c_{kl}(x,y) \, |f(y)|^2  \,
      (\partial_l \varphi)(x) \, (\partial_k \varphi)(x) \, dx  \\
& = & \gotb(\varphi \otimes f, \varphi \otimes f)
.
\end{eqnarray*}
Next let $f \in L_2(Y)$ and $\varphi,\psi \in V_\Omega(\Ri)$.
Since both the forms $\gota$ and $\gotb$ are symmetric, one deduces by polarisation
\begin{eqnarray*}
\gota(\varphi \otimes f, \psi \otimes f)
& = & \frac{1}{4} \Big( \gota((\varphi+\psi) \otimes f, (\varphi+\psi) \otimes f)
   - \gota((\varphi-\psi) \otimes f, (\varphi-\psi) \otimes f)
              \Big)  \\
& = & \frac{1}{4} \Big( \gotb((\varphi+\psi) \otimes f, (\varphi+\psi) \otimes f)
   - \gotb((\varphi-\psi) \otimes f, (\varphi-\psi) \otimes f)
              \Big)  \\
& = & \gotb(\varphi \otimes f, \psi \otimes f)
.  
\end{eqnarray*}
Now let $\varphi,\psi \in V_\Omega(\Ri)$ and 
$A,B \subset Y$ be measurable with $\nu(A) < \infty$ and $\nu(B) < \infty$.
If $A \cap B = \emptyset$, then
\[
\gota(\varphi \otimes \one_A, \psi \otimes \one_B)
= 0 
= \gotb(\varphi \otimes \one_A, \psi \otimes \one_B)
\]
by Lemma~\ref{lposvec401}.
Therefore without the condition $A \cap B = \emptyset$ one obtains
\begin{eqnarray*}
\gota(\varphi \otimes \one_A, \psi \otimes \one_B)
& = & \gota(\varphi \otimes \one_{A \cap B}, \psi \otimes \one_{A \cap B})  \\
& = & \gotb(\varphi \otimes \one_{A \cap B}, \psi \otimes \one_{A \cap B})  
= \gotb(\varphi \otimes \one_A, \psi \otimes \one_B)
.
\end{eqnarray*}
Hence by linearity and density one deduces that 
$\gota(u,v) = \gotb(u,v)$ for all $u,v \in V$
and the theorem is established under Condition~\ref{tposvec405-1}.

\smallskip

For the proof of Theorem~\ref{tposvec405} under the Condition~\ref{tposvec405-2}
we need some preparation.

Let $\cm$ be the subspace of $\cl(L_2(Y))$ of all bounded multiplication operators
and let $\cm_\Ri$ be the subset of multiplication operators
corresponding to multiplication by a bounded real-valued function.
Further let $\cl^r(L_2(Y))$ be the linear span of all positive operators 
on $L_2(Y)$.
Note that $\cm \subset \cl^r(L_2(Y))$.
It follows from Theorem~\ref{tposvec201}\ref{tposvec201-1}$\Leftrightarrow$\ref{tposvec201-6}
and \cite{Zaa} Theorem~12.2(ii) that there exists a unique bounded linear
projection $P_r \colon \cl^r(L_2(Y)) \to \cm$ onto $\cm$
such that $P_r(T) \in \cm_\Ri$ for all $T \in \cl^r(L_2(Y))$ 
which leave $L_2(Y,\Ri)$ invariant and such that $0 \leq P_r(T) \leq T$
for all $0 \leq T \in \cl^r(L_2(Y))$.
By \cite{Voi2} Theorem~1.4 one has $\|P_r(T)\| \leq \|T\|$ for all $T \in \cl^r(L_2(Y))$.

Since $\cm$ is isomorphic to $L_\infty(Y)$, it is an injective Banach space.
Hence the map $P_r$ can be 
extended to a contractive linear operator $P \colon \cl(L_2(Y)) \to \cm$,
see \cite{Voi2} Remark~1.5(a).
Moreover, the extension can be made such that $P(T) \in \cm_\Ri$ for all $T \in \cl(L_2(Y))$ 
which leave $L_2(Y,\Ri)$ invariant.
Since $P(T) \in \cm$, one has $P(P(T)) = P(T)$ for all $T \in \cl(L_2(Y))$ and 
$P$ is a projection.

`\ref{tposvec405-2}'.
By the Hahn--Banach theorem a subspace of a dual space $Z^*$ is weakly$^*$ dense in $Z^*$
if and only if it separates the points of $Z$. 
We apply this with $Z = \cl(L_2(Y))$.
Since $x \mapsto (C_{kl}(x) f,g)_{L_2(Y)}$ is 
a measurable function from $\Omega$ into $\Ci$ 
for all $f,g \in L_2(Y)$, the separability condition of \ref{tposvec405-2}
implies that the function $C_{kl} \colon \Omega \to \cl(L_2(Y))$
is Bochner measurable for all $k,l \in \{ 1,\ldots,d \} $
by \cite{HNVW1} Theorem~1.1.6(3)$\Rightarrow$(1) (or \cite{Are7} in case $d = 1$).

For all $\varphi,\psi \in V_\Omega$ define the operator 
$T_{\varphi,\psi} \in \cl(L_2(Y))$ by 
\[
T_{\varphi,\psi}
= \sum_{k,l=1}^d \int_\Omega (\partial_l \varphi)(x) \, \overline{(\partial_k \psi)(x)} \, C_{kl}(x) \, dx
.  \]
Then
$\gota(\varphi \otimes f,\psi \otimes g) = (T_{\varphi,\psi} f, g)_{L_2(Y)}$ for all $f,g \in L_2(Y)$.
If $f,g \in L_2(Y)$ with $f \, g = 0$, then 
$(T_{\varphi,\psi} f, g)_{L_2(Y)} = \gota(\varphi \otimes f,\psi \otimes g) = 0$
by Lemma~\ref{lposvec401}.
Hence $T_{\varphi,\psi}$ is a multiplication operator.
In particular
\[
T_{\varphi,\psi}
= P(T_{\varphi,\psi})
= \sum_{k,l=1}^d \int_\Omega (\partial_l \varphi)(x) \, \overline{(\partial_k \psi)(x)} \, P(C_{kl}(x)) \, dx
.  \]
Here we use that the $C_{kl}$ are Bochner integrable.
Let $k,l \in \{ 1,\ldots,d \} $.
Then $P(C_{kl}(x)) = P((C_{kl}(x)) \, \widecheck{\;}) \in \cm_\Ri$
for almost every $x \in \Omega$.
By Proposition~\ref{pposvec230}\ref{pposvec230-1}$\Rightarrow$\ref{pposvec230-2}
there exists a bounded measurable function $c_{kl} \colon \Omega \times Y \to \Ri$ 
such that for all $f \in L_2(Y)$ one has 
\[
\Big( P(C_{kl}(x)) f \Big) (y)
= c_{kl}(x,y) \, f(y)
\quad \mbox{for a.e.\ } (x,y) \in \Omega \times Y
.  \]
Define $\gotb \colon V \times V \to \Ci$ by 
\[
\gotb(u,v)
= \sum_{k,l=1}^d \int_\Omega \int_Y
      c_{kl}(x,y) \, ((\partial_l u)(x))(y) \, \overline{((\partial_k v)(x))(y)} \, dy \, dx
.  \]
If $\varphi,\psi \in V_\Omega$ and $f,g \in L_2(Y)$, then 
\begin{eqnarray*}
\gota(\varphi \otimes f,\psi \otimes g) 
& = & (T_{\varphi,\psi} f, g)_{L_2(Y)}  \\
& = & \sum_{k,l=1}^d \int_\Omega (\partial_l \varphi)(x) \, \overline{(\partial_k \psi)(x)} \,
     (P(C_{kl}(x)) f, g)_{L_2(Y)} \, dx  \\
& = & \sum_{k,l=1}^d \int_\Omega (\partial_l \varphi)(x) \, \overline{(\partial_k \psi)(x)} \,
     \int_Y c_{kl}(x,y) \, f(y) \, \overline{g(y)} \, dy \, dx  \\
& = & \gotb(\varphi \otimes f,\psi \otimes g) 
.
\end{eqnarray*}
Hence by linearity and density one deduces that 
$\gota(u,v) = \gotb(u,v)$ for all $u,v \in V$
and the theorem is established under Condition~\ref{tposvec405-2}.

`\ref{tposvec405-3}'. 
Now we prove Theorem~\ref{tposvec405} under Condition~\ref{tposvec405-3}.
Let $f,g \in L_2(Y)$ with $f \, g = 0$.
Let $\tau \in C_c^\infty(\Omega)$.
There exist $\psi,\tilde \psi \in C_c^\infty(\Omega)$ such that 
$\psi(x) = x_1$ and $\tilde \psi(x) = x_2$ for all $x \in \supp \tau$.
Then Lemma~\ref{lposvec401} and (\ref{elposvec401;1}) give
\[
0 
= \gota(\tau \otimes f, \psi \otimes g)
= \int_\Omega (C_{12}(x) f, g)_{L_2(Y)} \, (\partial_2 \tau)(x) \, dx
\]
and 
\[
0 
= \gota(\tilde \psi \otimes f, \overline \tau \otimes g)
= \int_\Omega (C_{12}(x) f, g)_{L_2(Y)} \, (\partial_1 \tau)(x) \, dx
.  \]
So $x \mapsto (C_{12}(x) f, g)_{L_2(Y)}$ is an element of 
$W^{1,\infty}(\Omega)$ with vanishing gradient.
Hence $x \mapsto (C_{12}(x) f, g)_{L_2(Y)}$ is constant a.e.\ on connected 
components of $\Omega$ by \cite{Zie2} Corollary~2.1.9.

Let $N \subset \Omega$ be the null set as in Lemma~\ref{lposvec206}.
Fix $x_0 \in \Omega \setminus N$.
If $x \in \Omega \setminus N$ and $f,g \in L_2(Y)$ with $f \, g = 0$, then 
$((C_{12}(x) - C_{12}(x_0))f, g)_{L_2(Y)} = 0$, so 
$C_{12}(x) - C_{12}(x_0)$ is a multiplication operator on $L_2(Y)$
by Theorem~\ref{tposvec201}\ref{tposvec201-2}$\Rightarrow$\ref{tposvec201-1} 
and it leaves $L_2(Y,\Ri)$ invariant.
Hence by Proposition~\ref{pposvec230}\ref{pposvec230-1}$\Rightarrow$\ref{pposvec230-2}
there exists a bounded measurable function $c_{12} \colon \Omega \times Y \to \Ri$ 
such that for all $f \in L_2(Y)$ one has 
\[
\Big( (C_{12}(x) - C_{12}(x_0)) f \Big) (y)
= c_{12}(x,y) \, f(y)
\quad \mbox{for a.e.\ } (x,y) \in \Omega \times Y
.  \]
As in the proof of Statement~\ref{tposvec405-1}
there are bounded measurable functions $c_{11}, c_{22}, c \colon \Omega \times Y \to \Ri$ 
such that for all $f \in L_2(Y)$ one has 
\[
\begin{array}{r@{}c@{}l}
\Big( C_{11}(x) f \Big) (y)
& {} = {} & c_{11}(x,y) \, f(y)  \\[5pt]
\Big( C_{22}(x) f \Big) (y)
& {} = {} & c_{22}(x,y) \, f(y)  \\[5pt]
\Big( (C_{12}(x) + C_{21}(x)) f \Big) (y)
& {} = {} & c(x,y) \, f(y)  
\end{array}
\quad \mbox{for a.e.\ } (x,y) \in \Omega \times Y
.  \]
Define $c_{21} = c - c_{12} \colon \Omega \times Y \to \Ri$
and define $\gotb \colon H^1_0(\Omega,L_2(Y)) \times H^1_0(\Omega,L_2(Y)) \to \Ci$ by 
\[
\gotb(u,v)
= \sum_{k,l=1}^d \int_\Omega \int_Y
      c_{kl}(x,y) \, ((\partial_l u)(x))(y) \, \overline{((\partial_k v)(x))(y)} \, dy \, dx
.  \]
If $\varphi,\psi \in C_c^\infty(\Omega)$ and $f,g \in L_2(Y)$, then 
\begin{eqnarray*}
\gota(\varphi \otimes f, \psi \otimes g)
& = & \gotb(\varphi \otimes f, \psi \otimes g)
   + \int_\Omega ( C_{12}(x_0) f, g)_{L_2(Y)} 
          \, (\partial_2 \varphi)(x) \, \overline{(\partial_1 \psi)(x)} \, dx   \\*
& & \hspace*{10mm} {}
   - \int_\Omega ( C_{12}(x_0) f, g)_{L_2(Y)}   
          \, (\partial_1 \varphi)(x) \, \overline{(\partial_2 \psi)(x)} \, dx  \\
& = & \gotb(\varphi \otimes f, \psi \otimes g)
\end{eqnarray*}
by Lemma~\ref{lposvec203}.
Hence by linearity and density, see Lemma~\ref{lposvec213}, one deduces that 
$\gota(u,v) = \gotb(u,v)$ for all $u,v \in H^1_0(\Omega,L_2(Y))$
and the theorem is established under Condition~\ref{tposvec405-3}.

`\ref{tposvec405-4}'.
Let $f,g \in L_2(Y)$ with $f \, g = 0$.
It follows as in the Dirichlet case in Statement~\ref{tposvec405-3}
that $x \mapsto (C_{12}(x) f, g)_{L_2(Y)}$ is constant on connected 
components of $\Omega$.
Let $\Omega_0$ be a connected component of $\Omega$.
Define $\varphi,\psi \in H^1(\Omega)$ by $\varphi(x) = x_2$ and $\psi(x) = x_1$
if $x \in \Omega_0$, and $\varphi(x) = \psi(x) = 0$ if $x \in \Omega \setminus \Omega_0$.
Then Lemma~\ref{lposvec401} and (\ref{elposvec401;1}) give
\[
0 
= \gota(\varphi \otimes f, \psi \otimes g)
= \int_{\Omega_0} (C_{12}(x) f, g)_{L_2(Y)} \, dx
\]
So $(C_{12}(x) f, g)_{L_2(Y)} = 0$ for almost every $x \in \Omega_0$ and 
then also $(C_{12}(x) f, g)_{L_2(Y)} = 0$ for a.e.\ $x \in \Omega$.
So $C_{12}(x)$ is a multiplication operator for a.e.\ $x \in \Omega$
and the rest of the proof is clear.
\end{proof}

\begin{remark} \label{rposvec405.5}
For Theorem~\ref{tposvec405} to hold it suffices that for each $\rho \in L_1(\Omega)$
the functions $x \mapsto \rho(x) \, C_{kl}(x)$ from $\Omega$ into $\cl(L_2(Y))$ 
are Pettis integrable for all $k,l \in \{ 1,\ldots,d \} $.
See \cite{HNVW1} Section~1.2.c for an introduction to the 
Pettis integral.
Then 
\[
\Phi(  \int_\Omega \rho(x) \, C_{kl}(x) \, dx )
= \int_\Omega \rho(x) \, \Phi( C_{kl}(x) ) \, dx
\]
for each $\Phi \in \cl(L_2(Y))^*$
and hence 
\[
P(  \int_\Omega \rho(x) \, C_{kl}(x) \, dx )
= \int_\Omega \rho(x) \, P( C_{kl}(x) ) \, dx
.  \]
This is what is needed in the proof of Theorem~\ref{tposvec405}.
We will see in Example~\ref{xposvec610} that under 
our general conditions $C_{kl}$ may not be Pettis integrable.
\end{remark}

\begin{remark} \label{rposvec406}
Note that due to Proposition~\ref{pposvec200.9}
we proved in case $d=2$, $V_\Omega = H^1(\Omega)$ and $\Omega$ bounded
that $C_{kl}(x)$ is a multiplication operator on $L_2(Y)$ 
for all $k,l \in \{ 1,2 \} $ and 
almost every $x \in \Omega$.
In general $\Omega$ has to be bounded, see Example~\ref{xposvec103}.
\end{remark}

The above remark is no longer valid in higher dimensions by the 
next example.

\begin{exam} \label{xposvec407}
Let $\Omega = (-1,1)^3$ and let the functions $c_{kl} \colon \Omega \to \Ri$
be as in Example~\ref{xposvec200.95}.
Let $Y = \{ 4,5 \} $ with counting measure.
For all $k,l \in \{1,2,3 \} $ define $C_{kl} \colon \Omega \to \cl(L_2(Y))$
by 
\begin{eqnarray*}
\Big( C_{kl}(x) f \Big)(4) & = & c_{kl}(x) \, f(5) \mbox{ if } k \neq l,   \\
\Big( C_{kl}(x) f \Big)(5) & = & 0  \mbox{ if } k \neq l,   \\
\Big( C_{kl}(x) f \Big)(n) & = & 6 \, f(n)  \mbox{ if } k = l \mbox{ and } n \in Y .   
\end{eqnarray*}
Then $C_{kl}$ is Bochner measurable.
Moreover, the ellipticity condition is satisfied and 
$\gota(u,v) 
= \sum_{k=1}^3 \int_\Omega ((\partial_k u)(x) \, (\partial_k v)(x))_{L_2(Y)} \, dx$
for all $u,v \in H^1(\Omega,L_2(Y))$.
So the associated semigroup is positive.
But for almost all $x \in \Omega$ the operator
$C_{12}(x)$ is not a multiplication operator on $L_2(Y)$.
\end{exam}

\begin{proof}[{\bf Proof of Theorem~\ref{tposvec101}.}]
This follows from Theorem~\ref{tposvec405}\ref{tposvec405-2}
with $V_\Omega = H^1_0(\Omega)$ or $V_\Omega = H^1(\Omega)$,
together with Lemma~\ref{lposvec213}.
\end{proof}

\section{\texorpdfstring{$\ell_2$}{l2}-valued coefficient operators} \label{Sposvec6}

We do not know whether the separability 
assumption~\ref{tposvec405-2} in Theorem~\ref{tposvec405}
can be omitted.
In this section we show that it can if $Y$ is purely atomic.
We also discuss the difficulties which occur in the general case.
The main result of this section is as follows.
Let $\ell_2$ be the usual sequence space and for all $n \in \Ni$ we denote by 
$e_n$ the usual $n$-th unit vector in $\ell_2$.
Let $\Omega \subset \Ri^d$ be open. 
We will identify $L_2(\Omega,\ell_2)$ with $\ell_2(L_2(\Omega))$ 
by associating $u \in L_2(\Omega,\ell_2)$ with $(u_n)_{n \in \Ni} \in \ell_2(L_2(\Omega))$, 
where $u_n \in L_2(\Omega)$ is given by $u_n(x) = (u(x), e_n)_{\ell_2}$ for all $n \in \Ni$.
For all $k,l \in \{ 1,\ldots,d \} $ let $C_{kl} \colon \Omega \to \cl(\ell_2)$
be a function such that $x \mapsto (C_{kl}(x) f,g)_{\ell_2}$ is 
a bounded measurable function from $\Omega$ into $\Ci$ 
for all $f,g \in \ell_2$.
Further we assume that there is a $\mu > 0$ such that 
\[
\RRe \sum_{k,l=1}^d (C_{kl}(x)  f_l, f_k)_{\ell_2}
\geq \mu \sum_{k=1}^d \|f_k\|_{\ell_2}^2
\]
for all $x \in \Omega$ and $f_1,\ldots,f_d \in \ell_2$.
As in Section~\ref{Sposvec5} let $V_\Omega$ be a closed subspace of $H^1(\Omega)$ such that 
$C_c^\infty(\Omega) \subset V_\Omega$.
Suppose $(\RRe \varphi)^+ \in V_\Omega$ for all $\varphi \in V_\Omega$.
Let $V = \ell_2(V_\Omega)$.
Define the form $\gota \colon V \times V \to \Ci$
by 
\[
\gota(u,v) 
= \sum_{k,l=1}^d \int_\Omega
    ( C_{kl}(x) \, (\partial_l u)(x) , (\partial_k v)(x) )_{\ell_2} \, dx
.  \]
Then $\gota$ is continuous and 
$\RRe \gota(u,u) + \mu \, \|u\|_{L_2(\Omega,\ell_2)}^2 \geq \mu \, \|u\|_V^2$
for all $u \in V$, so $\gota$ is elliptic.
Let $A$ be the operator in $L_2(\Omega,\ell_2)$ associated with $\gota$ and 
let $S$ be the $C_0$-semigroup on $L_2(\Omega,\ell_2)$ generated by $-A$.
Let $M \geq 0$ be such that $\|C_{kl}(x)\|_{\cl(\ell_2)} \leq M$ for all 
$x \in \Omega$ and $k,l \in \{ 1,\ldots,d \} $.

\begin{thm} \label{tposvec606}
Adopt the above notation and assumptions.
The following are equivalent.
\begin{tabeleq} 
\item \label{tposvec606-1}
The semigroup $S$ is positive.
\item \label{tposvec606-2}
For all $n \in \Ni$ there exists a semigroup $S^{(n)}$ associated to a 
form $\gota_n \colon V_\Omega \times V_\Omega \to \Ci$ given 
by 
\[
\gota_n(\varphi,\psi)
= \sum_{k,l=1}^d \int_\Omega c^{(n)}_{kl} \, (\partial_l \varphi) \, \overline{\partial_k \psi}
,  \]
where $c^{(n)}_{kl} \colon \Omega \to \Ri$ is a measurable function
with $\|c^{(n)}_{kl}\|_{L_\infty(\Omega)} \leq M$
for all $k,l \in \{ 1,\ldots,d \} $ 
and $\RRe \sum_{k,l=1}^d c^{(n)}_{kl}(x) \, \xi_l \, \overline{\xi_k} \geq \mu \, |\xi|^2$
for all $\xi \in \Ci^d$ and $x \in \Omega$.
Moreover, 
\[
S_t u = ( S^{(n)}_t u_n )_{n \in \Ni}
\]
for all $t > 0$ and $u \in \ell_2(L_2(\Omega))$.
\item \label{tposvec606-3}
For all $k,l \in \{ 1,\ldots,d \} $ and $n \in \Ni$ there exists a  bounded measurable function
$c^{(n)}_{kl} \colon \Omega \to \Ri$ such that
\[
\gota(u,v)
= \sum_{k,l=1}^d \int_\Omega \sum_{n=1}^\infty
      c^{(n)}_{kl}(x) \, (\partial_l u_n)(x) \, \overline{(\partial_k v_n)(x)} \, dx
\]
for all $u,v \in V$.
Moreover,
$|c^{(n)}_{kl}(x)| \leq M$ for all $k,l \in \{ 1,\ldots,d \} $, $x \in \Omega$ and $n \in \Ni$.
Further $\RRe \sum_{k,l=1}^d c^{(n)}_{kl}(x) \, \xi_l \, \overline{\xi_k} \geq \mu \, |\xi|^2$
for all $\xi \in \Ci^d$, $x \in \Omega$ and $n \in \Ni$.
\end{tabeleq}
\end{thm}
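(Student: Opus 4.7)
The strategy is to exploit the atomic structure of $Y = \Ni$---specifically $e_n \cdot e_m = 0$ for $n \neq m$---to decouple $\gota$ across atoms, reducing everything to the scalar case already handled by Proposition~\ref{pposvec200.3}.

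The implications \ref{tposvec606-3}$\Rightarrow$\ref{tposvec606-2} and \ref{tposvec606-2}$\Rightarrow$\ref{tposvec606-1} are essentially bookkeeping. Given real coefficients $c^{(n)}_{kl}$ as in \ref{tposvec606-3}, the scalar forms $\gota_n$ on $V_\Omega$ are bounded and elliptic with the stated constants, and Proposition~\ref{pposvec420} (applied with $L_2(Y) = \Ci$) shows that the generated semigroups $S^{(n)}$ are positive; the direct-sum form decomposition $\gota(u,v) = \sum_n \gota_n(u_n, v_n)$ forces $A$ to act as the direct sum of the $A_n$, hence $S_t u = (S^{(n)}_t u_n)_n$, giving \ref{tposvec606-2}. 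Positivity of $S$ then follows componentwise, since the positive cone of $L_2(\Omega, \ell_2)$ coincides with the componentwise positive elements of $\ell_2(L_2(\Omega))$.

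For the main direction \ref{tposvec606-1}$\Rightarrow$\ref{tposvec606-3}, the first step is form-level decoupling via Lemma~\ref{lposvec401}: since $e_n \cdot e_m = 0$ whenever $n \neq m$, one obtains $\gota(\varphi \otimes e_n, \psi \otimes e_m) = 0$ for all $\varphi, \psi \in V_\Omega$ and $n \neq m$. Setting $a^{(n)}_{kl}(x) := (C_{kl}(x) e_n, e_n)_{\ell_2}$ and
\[
\gota_n(\varphi, \psi) := \sum_{k,l=1}^d \int_\Omega a^{(n)}_{kl}(x) \, (\partial_l \varphi)(x) \, \overline{(\partial_k \psi)(x)} \, dx,
\]
the density of $\spann \{ \varphi \otimes e_n : \varphi \in V_\Omega, n \in \Ni \}$ in $V = \ell_2(V_\Omega)$ together with the uniform bound $|\gota_n(u_n, v_n)| \leq d^2 M \, \|u_n\|_{V_\Omega} \, \|v_n\|_{V_\Omega}$ yields $\gota(u,v) = \sum_n \gota_n(u_n, v_n)$ for all $u, v \in V$. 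This forces the operator and semigroup to decouple, and positivity of $S$ descends to positivity of each $S^{(n)}$ on $L_2(\Omega)$ by restriction to the $n$-th slot.

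To upgrade the complex $a^{(n)}_{kl}$ to real coefficients, I apply Proposition~\ref{pposvec200.3} in the scalar setting $L_2(Y) = \Ci$: since $S^{(n)}$ is positive it leaves $L_2(\Omega, \Ri)$ invariant, so $\gota_n$ coincides with the form built from the pointwise $(a^{(n)}_{kl}(x)) \, \widecheck{\;}$, which in one scalar dimension is simply $\RRe a^{(n)}_{kl}(x)$. Thus $c^{(n)}_{kl}(x) := \RRe a^{(n)}_{kl}(x)$ are bounded measurable real functions with $|c^{(n)}_{kl}(x)| \leq M$ representing $\gota_n$. Ellipticity at real $\xi \in \Ri^d$ follows from plugging $f_k = \xi_k e_n$ into the hypothesis on $C$; for complex $\xi = \alpha + i\beta$, reality of $c^{(n)}_{kl}$ gives $\RRe \sum c^{(n)}_{kl} \xi_l \overline{\xi_k} = \sum c^{(n)}_{kl} (\alpha_l \alpha_k + \beta_l \beta_k) \geq \mu(|\alpha|^2 + |\beta|^2) = \mu |\xi|^2$. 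The main obstacle will be the decoupling step: one must carefully identify the partial derivatives of $u \in H^1(\Omega, \ell_2)$ with the sequence $(\partial_k u_n)_n \in \ell_2(L_2(\Omega))$ and verify absolute convergence of $\sum_n \gota_n(u_n, v_n)$ to $\gota(u,v)$; the remaining steps are largely routine.
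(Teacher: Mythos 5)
Your approach is correct, but for the crucial implication \ref{tposvec606-1}$\Rightarrow$\ref{tposvec606-3} it is genuinely different from, and somewhat more elementary than, the paper's. The paper dualises $\cl(\ell_2)$ against the trace class $\cb_1(\ell_2)$, shows via Lemma~\ref{lposvec605} that the diagonal projection $P$ has a preadjoint, interprets the operator $T_{\varphi,\psi} = \sum_{k,l}\int_\Omega (\partial_l\varphi)\,\overline{\partial_k\psi}\,C_{kl}(x)\,dx$ as a weak$^*$-integral, and then uses Lemma~\ref{lposvec602} to pull $P$ inside the integral so that $P(C_{kl}(\cdot))$ furnishes the scalar coefficients. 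You bypass this entire integration machinery: because $e_n\,e_m=0$ for $n\neq m$, Lemma~\ref{lposvec401} immediately gives $\gota(\varphi\otimes e_n,\psi\otimes e_m)=0$ for $n\neq m$, and the scalar forms $\gota_n$ with coefficients $a^{(n)}_{kl}(x)=(C_{kl}(x)e_n,e_n)_{\ell_2}$ appear directly from the diagonal entries, with the direct-sum identity $\gota(u,v)=\sum_n\gota_n(u_n,v_n)$ following from bilinearity, the uniform bound $|\gota_n(\varphi,\psi)|\leq d^2M\|\varphi\|_{H^1}\|\psi\|_{H^1}$ and density. This exploits the atomicity of $\ell_2$ more transparently and arrives at the same coefficients $c^{(n)}_{kl}(x)=(C_{kl}(x)e_n,e_n)$ without any reference to Pettis or weak$^*$ integrals. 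What the paper's heavier route buys is that it mirrors the Bochner-integrable case of Theorem~\ref{tposvec405}\ref{tposvec405-2} and sets up the discussion of why the argument fails without atomicity (Remark~\ref{rposvec405.5} and Example~\ref{xposvec610}).

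Two secondary points. First, your passage ``the direct-sum form decomposition forces $A$ to act as the direct sum of the $A_n$, hence $S_t u=(S^{(n)}_tu_n)_n$'' needs justification; the paper handles this cleanly via the intertwining criterion \cite{AEK}~Proposition~2.2 applied to the coordinate projections $Q u = u_n$ with $Q^*\varphi=\varphi\otimes e_n$, using $\gota_n(Qu,\varphi)=\gota(u,Q^*\varphi)$. Second, your route to real coefficients---first decouple to obtain positive $S^{(n)}$ and then apply Proposition~\ref{pposvec200.3} componentwise---works, but is a detour: applying Proposition~\ref{pposvec200.3} once at the vector-valued level (as the paper does as its first step) reduces to $C_{kl}(x)=(C_{kl}(x))\,\widecheck{\;}$, and then $a^{(n)}_{kl}(x)=((C_{kl}(x))\,\widecheck{\;}\,e_n,e_n)_{\ell_2}$ is automatically real since $e_n$ is real-valued.
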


For the proof we need some preparation.

\smallskip

Let $Z$ be a Banach space and let $F \colon \Omega \to Z^*$ be a function.
We say that $F$ is {\bf weak$^*$-integrable} if 
$x \mapsto \langle F(x), z \rangle_{Z^* \times Z}$ is measurable for all $z \in Z$ and 
there exists a $\varphi \in L_1(\Omega)$ such that $\|F(x)\|_{Z^*} \leq \varphi(x)$
for almost every $x \in \Omega$.
If $F$ is weak$^*$-integrable, then we define $\int_\Omega F(x) \, dx \in Z^*$ by
\[
\langle \int_\Omega F(x) \, dx, z \rangle_{Z^* \times Z}
= \int_\Omega \langle F(x), z \rangle_{Z^* \times Z} \, dx
.  \]
We call $\int_\Omega F(x) \, dx$ the {\bf weak$^*$-integral of $F$}.

\begin{lemma} \label{lposvec602}
Let $Z$ be a Banach space and let $F \colon \Omega \to Z^*$ be weak$^*$-integrable.
Let $T \in \cl(Z)$.
Then $T^* \, F$ is weak$^*$-integrable and 
\[
T^* \int_\Omega F(x) \, dx
= \int_\Omega (T^* \, F)(x) \, dx
.  \]
\end{lemma}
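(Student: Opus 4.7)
The plan is a direct verification from the definition of the weak$^*$-integral. The lemma has two parts: first, establish that $T^* F$ is weak$^*$-integrable in the sense defined in the paper; second, verify the commutation identity by pairing both sides with an arbitrary element of $Z$.

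First I would check weak$^*$-integrability of $T^* F$. For any $z \in Z$, the scalar function
\[
x \mapsto \langle (T^* F)(x), z \rangle_{Z^* \times Z} = \langle F(x), Tz \rangle_{Z^* \times Z}
\]
is measurable, since $Tz \in Z$ and $F$ is weak$^*$-integrable (apply the definition with the test vector $Tz$). For the dominating function, $\|(T^* F)(x)\|_{Z^*} \leq \|T^*\| \, \|F(x)\|_{Z^*} = \|T\| \, \|F(x)\|_{Z^*} \leq \|T\| \, \varphi(x)$ almost everywhere, and $\|T\| \, \varphi \in L_1(\Omega)$. So $T^* F$ satisfies both conditions in the definition.

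Next I would verify the identity by pairing with an arbitrary $z \in Z$ and using the definition of the weak$^*$-integral in both directions:
\[
\langle T^* \textstyle\int_\Omega F(x) \, dx, z \rangle_{Z^* \times Z}
= \langle \textstyle\int_\Omega F(x) \, dx, Tz \rangle_{Z^* \times Z}
= \int_\Omega \langle F(x), Tz \rangle_{Z^* \times Z} \, dx,
\]
while on the other hand
\[
\langle \textstyle\int_\Omega (T^* F)(x) \, dx, z \rangle_{Z^* \times Z}
= \int_\Omega \langle (T^* F)(x), z \rangle_{Z^* \times Z} \, dx
= \int_\Omega \langle F(x), Tz \rangle_{Z^* \times Z} \, dx.
\]
The two expressions agree for every $z \in Z$, so the two functionals in $Z^*$ coincide.

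There is essentially no obstacle: the result is a structural property of the weak$^*$-integral, and the only thing to be slightly careful about is that the integrability hypotheses transfer under $T^*$, which is immediate from $\|T^*\| = \|T\|$ and the existence of the $L_1$-dominating function $\varphi$. No Fubini or measurability refinements beyond the scalar measurability of one function are required.
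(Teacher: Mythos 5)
Your proof is correct and follows the same route as the paper: verify weak$^*$-integrability of $T^*F$ by applying the definition to the test vector $Tz$ and dominating by $\|T\|\,\varphi$, then pair with an arbitrary $z \in Z$ and unfold the definition on both sides. The paper leaves the weak$^*$-integrability check as an "easy to see" remark, which you simply spell out.
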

\begin{proof}
It is easy to see that $T^* \, F$ is weak$^*$-integrable.
If $z \in Z$, then 
\begin{eqnarray*}
\langle T^* \int_\Omega F(x) \, dx, z \rangle_{Z^* \times Z}
& = & \langle \int_\Omega F(x) \, dx, T z \rangle_{Z^* \times Z}  \\
& = & \int_\Omega \langle F(x), T z \rangle_{Z^* \times Z}  \, dx
= \int_\Omega \langle (T^* \, F)(x), z \rangle_{Z^* \times Z} \, dx
\end{eqnarray*}
as required.
\end{proof}

Let $\cb_1(\ell_2)$ denote the Banach space of all trace class operators in $\ell_2$.
Define the bilinear map $\langle \cdot , \cdot \rangle_{\cb_1(\ell_2) \times \cl(\ell_2)} 
     \colon \cb_1(\ell_2) \times \cl(\ell_2) \to \Ci$ by 
\[
\langle S, T \rangle_{\cb_1(\ell_2) \times \cl(\ell_2)}
= \Tr(S \, T)
.  \]
Then $\langle \cdot , \cdot \rangle_{\cb_1(\ell_2) \times \cl(\ell_2)}$ is a duality
and 
\[
(\cb_1(\ell_2))^* = \cl(\ell_2)
,  \]
see \cite{Ped1} Theorem~3.4.13.
Define $P \colon \cl(\ell_2) \to \cl(\ell_2)$ by 
\[
P(T) f 
= \sum_{n=1}^\infty (T e_n, e_n)_{\ell_2} \, (f,e_n)_{\ell_2} \, e_n
.  \]
Then $P(T)$ is a multiplication operator for all $T \in \cl(\ell_2)$
and if $T$ is a multiplication operator, then $P(T) = T$.
Hence $P^2 = P$, that is $P$ is a projection.
We remark that $P$ is also a contraction.

\begin{lemma} \label{lposvec605}
The projection $P$ has a preadjoint.
\end{lemma}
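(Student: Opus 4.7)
The plan is to exhibit the preadjoint explicitly as the diagonal-projection on the trace class. Define $P_* \colon \cb_1(\ell_2) \to \cb_1(\ell_2)$ by
\[
P_*(S) f = \sum_{n=1}^\infty (S e_n, e_n)_{\ell_2} \, (f, e_n)_{\ell_2} \, e_n
\]
for all $S \in \cb_1(\ell_2)$ and $f \in \ell_2$. Since $S \in \cb_1(\ell_2)$, the sequence $((Se_n,e_n)_{\ell_2})_{n \in \Ni}$ is absolutely summable with $\sum_n |(Se_n,e_n)_{\ell_2}| \leq \|S\|_{\cb_1(\ell_2)}$, so $P_*(S)$ is a diagonal (and hence trace class) operator on $\ell_2$ with $\|P_*(S)\|_{\cb_1(\ell_2)} \leq \|S\|_{\cb_1(\ell_2)}$. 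Thus $P_*$ is a well-defined bounded linear operator on $\cb_1(\ell_2)$.

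Next I would verify that $(P_*)^* = P$. Using the trace duality
$\langle S, T \rangle_{\cb_1(\ell_2) \times \cl(\ell_2)} = \Tr(S T)$, the task is to check that $\Tr(P_*(S) \, T) = \Tr(S \, P(T))$ for all $S \in \cb_1(\ell_2)$ and $T \in \cl(\ell_2)$. On the one hand, $P(T) e_m = (T e_m, e_m)_{\ell_2} \, e_m$, so $S \, P(T) e_m = (Te_m, e_m)_{\ell_2} \, S e_m$ and hence
\[
\Tr(S \, P(T)) = \sum_{m=1}^\infty (Se_m, e_m)_{\ell_2} \, (Te_m, e_m)_{\ell_2}.
\]
On the other hand, $P_*(S)$ is diagonal with entries $(Se_n, e_n)_{\ell_2}$, so $P_*(S) \, T e_m = \sum_n (Se_n, e_n)_{\ell_2} \, (Te_m, e_n)_{\ell_2} \, e_n$, and therefore
\[
\Tr(P_*(S) \, T) = \sum_{m=1}^\infty (Se_m, e_m)_{\ell_2} \, (Te_m, e_m)_{\ell_2}.
\]
Both expressions agree, which establishes $(P_*)^* = P$ and completes the proof.

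The argument is essentially a computation, with no serious obstacle: one needs only that the diagonal of a trace class operator has $\ell_1$-summable entries (dominated by the trace norm), and that the trace can be computed with respect to the given orthonormal basis $(e_n)_{n \in \Ni}$. The only mild point to watch is absolute convergence of the double series appearing when expanding $\Tr(P_*(S) T)$, which is guaranteed by $\sum_n |(Se_n,e_n)_{\ell_2}| < \infty$ together with $\|T\|_{\cl(\ell_2)} < \infty$.
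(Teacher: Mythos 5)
Your proof is correct and follows essentially the same route as the paper: identify the restriction of $P$ to $\cb_1(\ell_2)$ as the preadjoint and verify $\Tr(P_*(S)\,T) = \Tr(S\,P(T))$ by computing both sides in the basis $(e_n)$. The only minor difference is in how you justify that $P$ maps $\cb_1(\ell_2)$ into itself: you invoke the standard fact that the diagonal entries of a trace-class operator are absolutely summable with $\ell_1$-norm bounded by the trace norm, whereas the paper deduces this by first treating positive operators (for which $\sum_n (P(T)e_n,e_n)_{\ell_2} = \Tr T$) and then extending by linearity; both justifications are valid.
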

\begin{proof}
If $T \in \cl(\ell_2)$, then observe that 
$(P(T) f, f)_{\ell_2} = \sum_{n=1}^\infty (T e_n, e_n)_{\ell_2} \, |(f, e_n)_{\ell_2}|^2$
for all $f \in \ell_2$.
Hence if $T$ is a positive operator, then again $P(T)$ is a positive operator in $\cl(\ell_2)$.
If $T \in \cl(\ell_2)$ and $n \in \Ni$, then $P(T) e_n = (T e_n, e_n)_{\ell_2} \, e_n$
and $( P(T) e_n, e_n)_{\ell_2} = (T e_n, e_n)_{\ell_2}$.
Further, if $T \in \cb_1(\ell_2)$ is a positive operator, in the Hilbert space sense, then 
$\sum_{n=1}^\infty ( P(T) e_n, e_n)_{\ell_2} = \sum_{n=1}^\infty (T e_n, e_n)_{\ell_2} = \Tr T < \infty$.
Hence $P(T) \in \cb_1(\ell_2)$ and $\Tr P(T) = \Tr T$.
By linearity $P(T) \in \cb_1(\ell_2)$ and $\Tr P(T) = \Tr T$ for all $T \in \cb_1(\ell_2)$.

If $S \in \cb_1(\ell_2)$ and $T \in \cl(\ell_2)$, then 
\begin{eqnarray*}
\langle S, P(T) \rangle_{\cb_1(\ell_2) \times \cl(\ell_2)}
& = & \Tr(S \, P(T))
= \sum_{n=1}^\infty (S \, P(T) e_n, e_n)_{\ell_2}  \\
& = & \sum_{n=1}^\infty (S ( (T e_n, e_n)_{\ell_2} \, e_n) , e_n)_{\ell_2}
= \sum_{n=1}^\infty (T e_n, e_n)_{\ell_2} \, (S e_n , e_n)_{\ell_2}
\end{eqnarray*}
and similarly 
\[
\langle P(S), T \rangle_{\cb_1(\ell_2) \times \cl(\ell_2)}
= \sum_{n=1}^\infty (T e_n, e_n)_{\ell_2} \, (S e_n , e_n)_{\ell_2}
.  \]
Hence 
\[
\langle S, P(T) \rangle_{\cb_1(\ell_2) \times \cl(\ell_2)}
= \langle P(S), T \rangle_{\cb_1(\ell_2) \times \cl(\ell_2)}
\]
for all $S \in \cb_1(\ell_2)$ and $T \in \cl(\ell_2)$.
If $P_{\cb_1(\ell_2)} \in \cl(\cb_1(\ell_2))$ denotes the restriction 
of $P$ to $\cb_1(\ell_2)$, then 
\[
\Big( P_{\cb_1(\ell_2)} \Big)^*
= P
\]
by the above.
So $P$ has a preadjoint.
\end{proof}

\begin{proof}[{\bf Proof of Theorem~\ref{tposvec606}.}]
`\ref{tposvec606-1}$\Rightarrow$\ref{tposvec606-3}'.
By Proposition~\ref{pposvec200.3} we may assume that $C_{kl}(x) = (C_{kl}(x)) \, \widecheck{\;}$
for all $k,l \in \{ 1,\ldots,d \} $ and $x \in \Omega$.
Let $\varphi,\psi \in V_\Omega$.
Define $T_{\varphi,\psi} \in \cl(\ell_2)$ by 
\[
T_{\varphi,\psi} 
= \sum_{k,l=1}^d \int_\Omega (\partial_l \varphi)(x) \, \overline{(\partial_k \psi)(x)} \, C_{kl}(x) \, dx
,  \]
where this time we use the weak$^*$-integral by considering $\cl(\ell_2) = (\cb_1(\ell_2))^*$.
As in the proof of Theorem~\ref{tposvec405} under Condition~\ref{tposvec405-2}
it follows that $T_{\varphi,\psi}$ is a multiplication operator.
Then 
Lemmas~\ref{lposvec602} and \ref{lposvec605} give
\[
T_{\varphi,\psi} 
= P(T_{\varphi,\psi})
= \sum_{k,l=1}^d \int_\Omega (\partial_l \varphi)(x) \, \overline{(\partial_k \psi)(x)} \, P( C_{kl}(x)) \, dx
.  \]
For all $n \in \Ni$ and $k,l \in \{ 1,\ldots,d \} $ define $c^{(n)}_{kl} \colon \Omega \to \Ri$ by
$c^{(n)}_{kl}(x) = (C_{kl}(x) e_n, e_n)_{\ell_2} = ((C_{kl}(x)) \, \widecheck{\;} e_n, e_n)_{\ell_2}$.
Then $c^{(n)}_{kl}$ is measurable and $|c^{(n)}_{kl}(x)| \leq M$ for all $x \in \Omega$.
Let $n \in \Ni$, $x \in \Omega$ and $\xi \in \Ci^d$.
Choose $f_k = \xi_k \, e_n$ for all $k \in \{ 1,\ldots,d \} $.
Then 
\begin{eqnarray*}
\RRe \sum_{k,l=1}^d c^{(n)}_{kl}(x) \, \xi_l \, \overline{\xi_k}
& = & \RRe \sum_{k,l=1}^d (C_{kl}(x) f_l, f_k)_{\ell_2}
\geq \mu \, \sum_{k=1}^d \|f_k\|_{\ell_2}^2
= \mu \, |\xi|^2
.
\end{eqnarray*}
Let $\varphi,\psi \in V_\Omega$ and $f,g \in \ell_2$.
Write $u = \varphi \otimes f$ and $v = \psi \otimes g$.
Then 
\begin{eqnarray*}
\gota(u,v)
& = & (T_{\varphi,\psi} f, g)_{\ell_2}  \\
& = & \sum_{k,l=1}^d \int_\Omega (\partial_l \varphi)(x) \, \overline{(\partial_k \psi)(x)} \,
     (P((C_{kl}(x)) \, \widecheck{\;} \, ) f, g)_{\ell_2} \, dx  \\
& = & \sum_{k,l=1}^d \int_\Omega \sum_{n=1}^\infty
      c^{(n)}_{kl}(x) \, (\partial_l u_n)(x) \, \overline{(\partial_k v_n)(x)} \, dx
.
\end{eqnarray*}
Then by density and continuity Condition~\ref{tposvec606-3} is valid.

`\ref{tposvec606-3}$\Rightarrow$\ref{tposvec606-2}'.
With the obvious definitions it only remains to show that 
$(S_t u)_n = S^{(n)}_t u_n$ for all $n \in \Ni$, $u \in \ell_2(L_2(\Omega))$
and $t > 0$.
Fix $n \in \Ni$.
Define $Q \colon \ell_2(L_2(\Omega)) \to L_2(\Omega)$ by $Q u = u_n$.
Then $Q^* \varphi = \varphi \otimes e_n$ for all $\varphi \in L_2(\Omega)$.
Therefore 
$Q V \subset V_\Omega$, $Q^* V_\Omega \subset V$ and 
$\gota_n(Q u, \varphi) = \gota(u,Q^* \varphi)$ for all 
$u \in V$ and $\varphi \in V_\Omega$.
Hence by \cite{AEK} Proposition~2.2(iii)$\Rightarrow$(i)
one deduces that $Q \, S_t = S^{(n)}_t \, Q$ for all $t > 0$.
So $(S_t u)_n = Q \, S_t u = S^{(n)}_t \, Q u = S^{(n)}_t u_n$
for all $u \in \ell_2(L_2(\Omega))$ and $t > 0$, as required.

`\ref{tposvec606-2}$\Rightarrow$\ref{tposvec606-1}'.
For all $n \in \Ni$ the semigroup $S^{(n)}$ is positive by 
\cite{Ouh5} Theorem~4.2.
Hence $S$ is a positive semigroup.
\end{proof}

\begin{proof}[{\bf Proof of Theorem~\ref{tposvec405} under assumption~\ref{tposvec405-5}}]
This follows immediately from Theorem~\ref{tposvec606}\ref{tposvec606-1}$\Rightarrow$\ref{tposvec606-3}.
\end{proof}

\begin{proof}[{\bf Proof of Theorem~\ref{tposvec110}.}]
First suppose that $Y = \Ni$.

`\ref{tposvec110-1}$\Rightarrow$\ref{tposvec110-2}'.
This follows from Theorem~\ref{tposvec606}\ref{tposvec606-1}$\Rightarrow$\ref{tposvec606-2}
and \cite{Ouh5} Theorem~4.2.

`\ref{tposvec110-2}$\Rightarrow$\ref{tposvec110-1}'.
This is trivial.

If $Y$ is a finite set, or $Y \subsetneq \Ni$, then the proof follows by obvious modifications.
\end{proof}

\begin{exam} \label{xposvec610}
We give an example of a finite Borel measure $\mu$ on a separable metrisable 
space~$X$ and a bounded function $C \colon X \to \cl(H)$ such that 
$x \mapsto (C(x) f, g)_H$ is Borel measurable for all $f,g \in H$,
but such that the Pettis integral does not exist.
Here $H$ is an infinite dimensional separable Hilbert space.

Let $\cu$ be a nonprincipal ultrafilter on $\Ni$.
We refer to \cite{HrbacekJech} Section~11.2 and Theorem~11.2.8
for background information on ultrafilters that is used in the following.
By Kadets and Leonov, \cite{KadetsLeonov} Corollary~2.16, Theorem~2.14, Lemma~2.13 and Theorem~2.2
there exist a subset $X$ of the Cantor space $ \{ 0,1 \} ^\Ni$,
a finite Borel measure $\mu$ on $X$ and for all $n \in \Ni$ 
a measurable function $\varphi_n \colon X \to \Ri$ such that 
\begin{itemize}
\item
$0 \leq \varphi_n(x) \leq 1$ for all $x \in X$ and $n \in \Ni$,
\item
$\lim\limits_\cu \varphi_n(x) = 0$ for all $x \in X$ and 
\item
$\lim\limits_\cu \int_X \varphi_n \, d\mu = \alpha > 0$.
\end{itemize}
This example is constructed to show that the dominated convergence theorem does not hold 
for convergence along a free ultrafilter on $\Ni$.

Now let $ \{ e_n : n \in \Ni \} $ be an orthonormal basis for $H$ and define 
$C \colon X \to \cl(H)$ by 
\[
C(x) f
= \sum_{n=1}^\infty \varphi_n(x) \, (f, e_n)_H \, e_n
.  \]
Let $f,g \in H$.
Then $(C(x) f,g)_H = \sum_{n=1}^\infty \varphi_n(x) \, (f,e_n)_H \, (e_n,g)_H$ 
for all $x \in X$, so $x \mapsto (C(x) f,g)_H$ is measurable.
Further $\|C(x)\|_{\cl(H)} \leq 1$ for all $x \in X$.
Define the operator $T \in \cl(H)$ by 
\[
(T f, g)_H 
= \int_X (C(x) f, g) \, d\mu(x)
\]
for all $f,g \in H$.
Next define $\Phi \in (\cl(H))^*$ by 
\[
\langle \Phi, S \rangle_{(\cl(H))^* \times \cl(H)}
= \lim\limits_\cu (S e_n, e_n)_H
.  \]
Observe that $(T e_n, e_n)_H = \int_X \varphi_n \, d\mu$ for all $n \in \Ni$.
Therefore $\langle \Phi, T \rangle_{(\cl(H))^* \times \cl(H)} = \alpha \neq 0$.

On the other hand, 
\[
\langle \Phi, C(x) \rangle_{(\cl(H))^* \times \cl(H)}
= \lim\limits_\cu \varphi_n(x) 
= 0
\]
for all $x \in X$.
Hence 
\[
\langle \Phi, T \rangle_{(\cl(H))^* \times \cl(H)}
\neq \int_X \langle \Phi, C(x) \rangle_{(\cl(H))^* \times \cl(H)} \, d\mu(x)
.  \]
This implies that $C$ is not Pettis integrable.
\end{exam}

\subsection*{Acknowledgements}
The second-named author is most grateful for the hospitality extended
to him during fruitful stays at Ulm University.
He wishes to thank Ulm University for financial support.

\end{document}